\theoremstyle{plain}
\newtheorem{theorem}{Theorem}[section]
\newtheorem{proposition}[theorem]{Proposition}
\newtheorem{lemma}[theorem]{Lemma}
\theoremstyle{definition}
\newtheorem{definition}[theorem]{Definition}
\newtheorem{assumption}[theorem]{Assumption}
\theoremstyle{remark}
\newtheorem{remark}[theorem]{Remark}
\def\M{\mathcal{M}}
\def\D{\mathrm{D}}
\def\Exp{{\mathrm{Exp}}}
\def\sR{{\mathbb R}}
\def\sS{{\mathbb S}}
\def\sE{{\mathbb E}}
\def\sM{{\mathbb M}}
\def\gD{{\mathcal{D}}}
\def\gP{{\mathcal{P}}}
\def\gH{{\mathcal{H}}}
\def\gX{{\mathcal{X}}}
\def\gS{{\mathcal{S}}}
\def\gT{{\mathcal{T}}}
\def\gL{{\mathcal{L}}}
\def\gO{{\mathcal{O}}}
\def\gI{{\mathcal{I}}}
\def\expm{{\mathrm{expm}}}
\def\grad{{\mathrm{grad}}}
\newcommand{\trace}{\mathrm{tr}}
\DeclareMathOperator*{\argmin}{arg\,min}
\def\diag{{\mathrm{diag}}}
\def\dist{{\mathrm{dist}}}
\def\Retr{{\mathrm{Retr}}}
\def\Exp{{\rm Exp}}
\def\St{{\rm St}}
\def\Skew{{\rm Skew}}
\def\skew{{\rm skew}}
\def\sym{{\rm sym}}
\def\Sym{{\rm Sym}}
\def\Sp{{\rm Sp}}
\def\Gr{{\rm Gr}}
\newcommand{\thickhline}{%
    \noalign {\ifnum 0=`}\fi \hrule height 1pt
    \futurelet \reserved@a \@xhline
}
\newcolumntype{"}{@{\hskip\tabcolsep\vrule width 1pt\hskip\tabcolsep}}
\icmltitlerunning{Riemannian Coordinate Descent Algorithms on Matrix Manifolds}
\begin{document}

\twocolumn[

\icmltitle{Riemannian Coordinate Descent Algorithms on Matrix Manifolds}





\begin{icmlauthorlist}
\icmlauthor{Andi Han}{riken}
\icmlauthor{Pratik Jawanpuria}{ms}
\icmlauthor{Bamdev Mishra}{ms}
\end{icmlauthorlist}

\icmlaffiliation{riken}{Riken AIP, Japan}
\icmlaffiliation{ms}{Microsoft India}

\icmlcorrespondingauthor{Andi Han}{andi.han@riken.jp}

\icmlkeywords{Machine Learning, ICML}

\vskip 0.3in
]



\printAffiliationsAndNotice{}  

\begin{abstract}

Many machine learning applications are naturally formulated as optimization problems on Riemannian manifolds. The main idea behind Riemannian optimization is to maintain the feasibility of the  variables while moving along a descent direction on the manifold. This results in updating all the variables at every iteration. 
In this work, we provide a general framework for developing computationally efficient coordinate descent (CD) algorithms on matrix manifolds that allows updating only a few variables at every iteration while adhering to the manifold constraint. In particular, we propose CD algorithms for various manifolds such as Stiefel, Grassmann, (generalized) hyperbolic, symplectic, and symmetric positive (semi)definite. While the cost per iteration of the proposed CD algorithms is low, we further develop a more efficient variant via a first-order approximation of the objective function. We analyze their convergence and complexity, and empirically illustrate their efficacy in several applications. 



\end{abstract}

\section{Introduction}

In this work, we consider the optimization problem 
\begin{equation}
    \min_{X \in \sR^{n \times p}} f(X) \qquad {\rm s.t. } \quad X \in \M, \label{main_prob_eq}
\end{equation}
where $\M$ is a smooth, and often nonlinear constraint. Examples of $\M$ include 
orthogonality constraint \citep{edelman1998geometry}, 
positive (semi)definite constraint \citep{bhatia2009positive,han2021riemannian}, 
fixed-rank constraint \citep{vandereycken2013low}, 
hyperbolic constraint \citep{nickel2018learning}, 
doubly stochastic constraint \citep{douik2019manifold}, etc. Problem~\eqref{main_prob_eq} has been explored in applications such as 
PCA \citep{zhang2016riemannian,kasai19a}, 
low-rank matrix/tensor completion \citep{jawanpuria18a,nimishakavi18a,kressner2014low}, computer vision \citep{pennec06a}, natural language processing \citep{jawanpuria19a,jawanpuria20a}, 
optimal transport \citep{mishra2021manifold,han2022riemannian,shi2021coupling}, 
and deep learning \citep{arjovsky2016unitary,wang2020orthogonal}. Problem~\eqref{main_prob_eq} has also been studied in various settings such as stochastic optimization \citep{bonnabel2013stochastic,zhang2016riemannian,tripuraneni2018averaging,sato2019riemannian,kasai19a,han2021improved}, differential privacy \citep{reimherr2021differential,han24a,utpala2022improved}, federated learning \cite{li22a,huang24a},  decentralized learning \citep{mishra19a}, and saddle point and bilevel optimization \citep{han23b,han2023nonconvexnonconcave,zhang2023sion,han2024framework}. 

The smooth constraint set can be turned into a Riemannian manifold by endowing a properly chosen metric structure. The Riemannian optimization approach \citep{absil2008optimization,boumal2023introduction} then provides a principled approach to solve \eqref{main_prob_eq} intrinsically on the manifold space. The main idea is to iteratively update the variable along a descent direction without leaving the manifold. The descent direction is often computed using the Riemannian gradient, which is then followed by a retraction update to ensure feasibility of the manifold constraint.  
As the dimensionality of the constraint set increases, ensuring feasibility via retraction becomes a key computational bottleneck, e.g., the complexity of ensuring orthogonality and positive definiteness scales as $O(n^3)$ with the input dimension $n$. 
This has led to many recent works \citep{gao2019parallelizable,xiao2021solving,ablin2023infeasible} that develop infeasible methods for solving \eqref{main_prob_eq}. However, such methods are largely limited to the orthogonality constraint and cannot be easily adapted to other manifolds. 

In the Euclidean space, the coordinate descent (CD) method \citep{luo1992convergence,nesterov2012efficiency,wright2015coordinate} is a classic algorithm that successively solves a small-dimensional subproblem along a component of the vector variable while holding others fixed. Since each subproblem can be more easily solved than the original problem, this strategy leads to efficient variable update. 

On manifolds, designing CD updates is inherently difficult \citep{gutman2023coordinate}. A few works have proposed manifold specific CD updates, mainly for the orthogonal \citep{shalit2014coordinate,jianggivens,massart2022coordinate} and Stiefel \citep{gutman2023coordinate} manifolds. {Although \citet{gutman2023coordinate} discuss a general framework for developing CD methods on manifolds, concrete developments have been shown only for the Stiefel manifold.} Recently, for a class of optimization objectives, \citet{darmwal2023low} have proposed CD updates on the symmetric positive definite manifold with the affine-invariant metric. 


In this work, we provide a general approach for developing CD algorithms on matrix manifolds. We summarize our contributions below.
\begin{itemize}
    \item We introduce a framework for designing CD algorithms on manifolds. In particular, we find a basis spanning the tangent space such that a chosen retraction along the direction of such a basis admits an efficient computation. We discuss a simple expression for the coordinate derivative. Finally, we provide optimization ingredients for various matrix manifolds of interest. 


    \item A nonlinear objective $f$ in (\ref{main_prob_eq}) requires computation of gradient for every CD update. Using a first-order approximation of $f$, we develop a more efficient CD algorithm which requires gradient computations one in every fixed number of CD updates.
    We analyze the convergence and complexity of the two algorithms with randomized and cyclic selection of coordinates.

    \item We show the benefits of the proposed CD algorithms on the orthogonal Procrustes, PCA, orthogonal deep network distillation, nearest matrix, and learning hyperbolic embeddings problems. 
\end{itemize}



        



    


\section{Preliminaries}

\textbf{Riemannian manifolds and optimization.} For a Riemannian manifold $\M$, denote its tangent space at $X \in \M$ as $T_X\M$. A Riemannian metric is an inner product structure $g_X(\cdot, \cdot) = \langle \cdot, \cdot \rangle_X : T_X\M \times T_X\M \rightarrow \sR$ that varies smoothly with the base point $X$. In this work, we particularly focus on matrix manifolds, i.e., where $X$ can be represented in the ambient vector space $\sR^{m \times n}$. The orthogonal projection ${\rm Proj}_X: \sR^{m \times n} \rightarrow T_X\M$ projects arbitrary ambient vectors to the tangent space $T_X\M$ with respect to the Riemannian metric. For a differentiable function $f: \M \rightarrow \sR$, the Riemannian gradient at $X$ is defined as the tangent vector $\grad f(X) \in T_X\M$ such that $\langle U, \grad f(X)\rangle_X = \D f(X) [U], \forall U \in T_X\M$ where $\D f(X) [U] = \langle \nabla f(X), U\rangle$. 
A retraction $\Retr_X : T_X\M \xrightarrow{} \M$ allows points to move along the manifold, which satisfies the conditions: $\Retr_X(0) = X$ and $\D \Retr_X(0)[U] = U$. 



\textbf{Related works.} We provide a detailed review of the existing coordinate descent (CD) algorithms on specific manifolds, along with other related works in Appendix \ref{appendix:sec:related_works}.

\textbf{Notations.} We use $\langle \cdot, \cdot \rangle$ without the subscript to represent the Euclidean inner product while we use $\langle \cdot, \cdot \rangle_X$ to denote the Riemannian inner product on $T_X\M$. The specific expression for $\langle \cdot, \cdot \rangle_X$ depends on both $\M$ and $X$. $\Sym(n)$ and $\Skew(n)$ denote the sets of $n \times n$ symmetric and skew-symmetric matrices, respectively. Let $\sym(A) \coloneqq (A + A^\top)/2$, $\skew(A) \coloneqq (A - A^\top)/2$, $\exp(\cdot)$ be the elementwise exponential, and $\expm(\cdot)$ be the matrix exponential. We also use $e_i$ to represent the $i$-th basis vector with the dimension to be determined from the context. $[A]_{ij}$ denotes the $i,j$-th entry of a matrix $A$ while $A_{ij}$ represents a matrix with index $i,j$. We use $I_n$ to denote the $n \times n$ identity matrix, $1_n$ to denote the size-$n$ vector of all 1s, and define $[n] \coloneqq \{1, 2, ...,n \}$.

\section{Proposed CD Framework}
\label{main_sect_cd}



As shown in \citep{shalit2014coordinate, gutman2023coordinate, massart2022coordinate, jianggivens,darmwal2023low}, for specific manifolds, the key in developing CD algorithms is the choice of the basis vectors $B_\ell$ ($\ell \in \gI$ and $\gI$ denotes the index set) spanning the tangent space that allow efficient retraction. In general, our chosen basis need not be orthonormal with respect to the Riemannian metric.
%
%
%
%
%
Once the basis and retraction are chosen, the CD update is given by $\Retr_X(-\eta \theta B_\ell)$, where $\eta > 0$ is the stepsize and $\theta$ is the coordinate derivative, i.e.,
\begin{equation}\label{eq:main_coordinate_derivative}
    \theta \coloneqq \frac{d}{d\theta} f(\Retr_X(\theta B_\ell) ) \vert_{\theta = 0} = \langle \grad f(X), B_\ell \rangle_X.
\end{equation}
It can be verified that $-\theta B_\ell$ is indeed a descent direction, i.e., 
$\langle \grad f(X), - \theta B_\ell \rangle_X = -\theta \frac{d}{d\theta} f(\Retr_X(\theta B_\ell) ) \vert_{\theta = 0} = -\theta^2 \leq 0$. The CD algorithm then involves iteratively selecting coordinate index $\ell$, computing $\theta$, and updating in the coordinate descent direction $\Retr_X(- \eta \theta B_\ell)$.

The main challenges in developing CD algorithms on matrix manifolds are: 1) characterization of $B_\ell$ which facilitates efficient computation, 2) efficient computation of $\theta$, and 3) easy generalization to different manifolds. We propose to leverage the following connection between the Riemannian and Euclidean gradients:
\begin{equation}\label{eq:theta_definition}
    \theta = \langle \grad f(X), B_\ell \rangle_X =  \langle \nabla f(X), B_\ell \rangle,
\end{equation}
where $\nabla f(X)$ is the Euclidean gradient and the last equality follows from the definition of the Riemannian gradient. We exploit \eqref{eq:theta_definition} to efficiently compute $\theta$ for several manifolds as it is independent of the Riemannian gradient and metric. 
In the subsequent sections, we develop concrete CD optimization ingredients for the manifolds of interest under the proposed approach. These are summarized in Table \ref{table_summary_cdmfd}.

%

\begin{table*}[t]
\begin{center}
{\small 
\caption{Summary of CD ingredients over various manifolds. $H_{ij} = e_ie_j^\top - e_je_i^\top$ and $E_{ij} = e_i e_j + e_j e_i^\top$ are the basis for skew-symmetric and symmetric matrices, respectively. 
$G_{ij}(\theta)$ and $R_{ij}(\theta)$ corresponds to the Givens and hyperbolic rotations, respectively. ${\rm P}_X \coloneqq I_n - XX^\top$, and $\nabla f(X)_k$ is the $k$-th column of $\nabla f(X)$. We use $\Exp_{x}^\gS(v)$ to denote the exponential retraction over sphere.
The complexity only considers the computation of coordinate derivative and coordinate update, while excluding the complexity of first-order oracle $\nabla f(X)$.
}
\label{table_summary_cdmfd}
\begin{tabular}{@{}p{0.11\textwidth}p{0.07\textwidth}p{0.12\textwidth}p{0.22\textwidth}p{0.25\textwidth}p{0.1\textwidth}@{}}
\toprule
 & \makecell[l]{Size} & \makecell[l]{Basis \\ $B_\ell$} &\makecell[l]{Coordinate derivative \\ $\theta =\langle \grad f(X), B_\ell \rangle_X$} & \makecell[l]{Coordinate descent update \\ $\Retr_X(-\eta \theta B_\ell)$} & \makecell[l]{Complexity \\ (per update)} \\
\midrule
Orthogonal${}^*$ & $n \times n$ & $H_{ij}X$ & $\langle \nabla f(X), H_{ij}X \rangle$ & $G_{ij} \big(-\eta \theta \big) X$ & $O(n)$ \\
\addlinespace[4pt]
Stiefel${}^\dagger$ & $n \times p$ & \makecell[l]{$ X H_{ij}$ ,  \\ $v_{X} e_k^\top$} &\makecell[l]{$ \langle \nabla f(X) , X H_{ij}\rangle $ , \\ ${\rm P}_{X}( \nabla f(X)_k)$} & \makecell[l]{$X G_{ij}(-\eta \langle \nabla f(X) , X H_{ij}\rangle)$ , \\ $\Exp_{X_{k}}^{\gS}\big(- \eta {\rm P}_{X}( \nabla f(X)_k) \big)$} & $O(n),O(np)$\\
\addlinespace[4pt]
SPD${}^\diamond$  & $n \times n$ & $L {E}_{ij} L^\top$ & $\langle \nabla f(X), L {E}_{ij} L^\top \rangle$ & $L \expm(- \eta \theta {E}_{ij}) L^\top$ & $O(n)$ \\\hline
\addlinespace[4pt]
\makecell[l]{Stiefel \\ Grassmann} & $n\times p$ & $H_{ij}X$ & $\langle \nabla f(X), H_{ij}X \rangle$ & $G_{ij} \big(-\eta \theta \big) X$ & $O(p)$ \\
\addlinespace[1pt]
Hyperbolic & $n \times p$ & $H_{ij} JX$ & $\langle \nabla f(X), H_{ij} J X \rangle$ &  $\begin{cases}
    G_{ij}\big( - \eta \theta  \big) X &\text{ if } i \neq 1 \\
    R_{ij}\big( - \eta  \theta \big) X &\text{ if } i = 1
\end{cases}$ &  $O(p)$ \\
\addlinespace[1pt]
Symplectic & $ 2n \times 2p$ & $E_{ij} \Omega_n X$ & $ \langle \nabla f(X), E_{ij} \Omega_n X \rangle$  & Proposition \ref{prop_retr_sympl_cd} & $O(p)$
\\\addlinespace[1pt]
\makecell[l]{Doubly stoch.} & $m \times n$ & $A e_i e_j^\top B^\top$ & $\langle \nabla f(YY^\top), A e_i e_j^\top B^\top  \rangle$ & ${\rm cSK}(X \odot \exp( -\eta \theta B_\ell \oslash X))$ & $O(1)$
\\
\makecell[l]{Multinomial} & $n \times p$ & $e_i (e_j - e_{j+1})^\top$ & $[\nabla f(X)]_{ij} - [\nabla f(X)]_{i(j+1)}$ & $\gP(X \odot \exp( -\eta \theta B_\ell \oslash X))$ & $O(1)$
\\
\makecell[l]{SPSD / SPD} & $n \times n$ & $e_i e_j^\top$ & $\langle \nabla f(YY^\top), e_i e_j^\top  \rangle$ & $ Y - \eta \theta e_ie_j^\top$ & $O(1)$ \\
\bottomrule
\end{tabular}
}
\end{center}
{\footnotesize ${}^*$\citep{shalit2014coordinate}; ${}^\dagger$\citep{gutman2023coordinate}; ${}^\diamond$\citep{darmwal2023low}.}
\end{table*}


\subsection{CD on Stiefel manifold}\label{sec:stiefel}




The Stiefel manifold $\St(n,p)$ is the set of column orthonormal matrices of size $\sR^{n \times p}$, i.e., $\St(n,p) \coloneqq \{ X \in \sR^{n \times p} : X^\top X = I_p \}$. When $p = n$, $\St(n,n) \equiv \gO(n)$, the orthogonal manifold. The tangent space of Stiefel manifold is identified as $T_X\St(n,p) = \{ U \in \sR^{n \times p} : X^\top U + U^\top X = 0 \}$. The Riemannian metric is defined as $\langle U, V \rangle_X \coloneqq \langle U, V\rangle$ for any $U, V \in T_X\M$. The Riemannian gradient is derived as  $\grad f(X) = \nabla f(X) - X \sym(X^\top \nabla f(X))$. 

\textbf{Choice of basis.}
Taking inspiration from $\gO(n)$, we adopt the $\Omega X$ parameterization of the tangent vectors (where $\Omega \in \Skew(n)$) and choose the basis as $B_\ell = H_{ij} X$ for $\ell \in \gI = \{ (i,j) : 1 \leq i < j \leq n \}$ and $H_{ij} \coloneqq e_i e_j^\top - e_j e_i^\top$. In contrast to $\gO(n)$, the chosen basis is not orthonormal for $\St(n,p)$. This is expected as the manifold $\St(n,p)$ has a dimension $np - \frac{p(p-1)}{2}$ while we adopt an over-parameterization of the tangent space using $n(n-1)/2$ basis vectors. 

\textbf{Retraction.}
For the purpose of CD update, 
we first note that $\Retr_X(tU) = \expm(t \Omega) X$ is a valid retraction on $\St(n,p)$ because: 1) $\Retr_X(0) = X$ and 2) $\D\Retr_X(0)[U] = \Omega X = U$ are satisfied \citep{siegel2019accelerated}.

\textbf{CD update.}
Based on the above choices of the basis vectors and retraction, the proposed CD update is $\Retr_X(- \eta \theta B_\ell) = G_{ij}(-\eta \theta)X$, where $\theta  = \langle \nabla f(X), B_\ell \rangle = [\nabla f(X) X^\top - X \nabla f(X)^\top]_{ij}$. Here, $G_{ij}(\theta) = I_n + (\cos(\theta) - 1) (e_ie_i^\top + e_je_j^\top) + \sin(\theta) (e_ie_j^\top - e_j e_i^\top)$ is known as the Givens rotation around axes $i, j$ with angle $-\theta$. Overall, each CD update only requires $O(p)$ as we modify only two rows of $X$.




\begin{remark}
\citet{gutman2023coordinate} propose a column-wise CD update on the Stiefel manifold which costs $O(np)$ per iteration. 
On the other hand, our proposed CD update is row-wise and costs $O(p)$, which is cheaper. Furthermore, the CD update strategy of \citep{gutman2023coordinate} cannot be applied to the sphere manifold, i.e., when $p = 1$, it reduces to the full gradient update on the sphere. This, however, is not an issue for our update. Finally, the update of \citet{gutman2023coordinate} is not invariant to the right action of orthogonal group and hence does not yield a valid CD strategy for the Grassmann manifold. In contrast, as shown in the next section, our strategy can be readily generalized to the Grassmann manifold. 

\end{remark}





\subsection{CD on Grassmann manifold} \label{sec:grassmann}

The Grassmann manifold $\Gr(n,p)$ represents the $p$-dimensional subspaces in $\sR^n$, which can be represented by an $n \times p$ orthonormal matrix $X$, i.e., $X \in \St(n,p)$, where the columns span the subspace. The representation is not unique, with $XQ$ representing the same subspace for arbitrary $Q \in \gO(p)$. Thus, the Grassmann manifold can be identified as $\Gr(n,p) = \{ [X] : X \in \sR^{n \times p}, X^\top X = I_p \}$, where $[X] \coloneqq \{ XQ : Q \in \gO(p) \}$. The tangent space can be uniquely characterized by the horizontal space at $T_X\St(n,p)$, i.e., $T_{[X]}\Gr(n,p) = \{ [U] : X^\top U = 0 \}$. For a given $\xi\in T_{[X]}\Gr(n,p)$, its unique horizontal lift is $U = {\rm lift}_X(\xi)$, where $[X]$ is represented as $X$. The lift operator satisfies ${\rm lift}_{XQ} (\xi) = {\rm lift}_X(\xi) Q$. On  $\Gr(n,p)$, the Riemannian metric is pushed forward by the Euclidean metric on $\St(n,p)$ as $\langle \xi, \zeta \rangle_{[X]} = \langle {\rm lift}_X(\xi), {\rm lift}_X(\zeta) \rangle$ and the corresponding Riemannian gradient $\grad f([X])$ can be represented by ${\rm lift}_X(\grad f([X]) = (I_n - XX^\top) \nabla f(X)$. Retractions such as QR retraction for $\St(n,p)$ also work for $\Gr(n,p)$ as long as it preserves the equivalence class, i.e., $[\Retr_{XQ}(t \, {\rm lift}_{X}(\xi) Q)] = [\Retr_X(t \, {\rm lift}_{X}(\xi))]$ for any $Q \in \gO(p)$. 
%
%
%
Below, we show that the proposed CD update for $\St(n,p)$ is also well-defined for $\Gr(n,p)$.
\begin{proposition}
\label{grass_cd_prop}
Consider a function $f : \Gr(n,p) \rightarrow \sR$.
Let the coordinate descent update at $[X]$ be given by $\Retr_{X}(-\eta \, \theta H_{ij} X) \coloneqq G_{ij}(- \eta \theta) X$ for $1\leq i < j \leq n$, where $\theta = \langle \nabla f(X), H_{ij} X \rangle$ and for some fixed stepsize $\eta > 0$. Then, $\Retr_{X Q} (-\eta \, \theta_{XQ} H_{ij} XQ) = \Retr_X(- \eta \theta H_{ij} X) Q$.
\end{proposition}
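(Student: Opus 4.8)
The plan is to verify the claimed equivariance relation by direct computation, checking two things in turn: first that the coordinate derivative transforms correctly under the right $\gO(p)$-action, and then that the Givens update does. First I would compute $\theta_{XQ} = \langle \nabla f(XQ), H_{ij} XQ\rangle$. Here the key observation is that $f$ is defined on $\Gr(n,p)$, so $f(XQ) = f(X)$ for all $Q \in \gO(p)$; differentiating this invariance gives $\nabla f(XQ) = \nabla f(X) Q$. Substituting, $\theta_{XQ} = \langle \nabla f(X) Q, H_{ij} X Q\rangle = \langle \nabla f(X) Q Q^\top, H_{ij} X\rangle = \langle \nabla f(X), H_{ij} X\rangle = \theta$, using $QQ^\top = I_p$ and the cyclic/transpose invariance of the trace inner product. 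So the coordinate derivative is actually invariant, not merely equivariant.

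Next I would handle the update itself. Since $\theta_{XQ} = \theta$, the left-hand side is $\Retr_{XQ}(-\eta\,\theta\, H_{ij} XQ) = G_{ij}(-\eta\theta)\, X Q$ by the definition of the proposed CD update (the Givens matrix $G_{ij}(-\eta\theta)$ acts on the left and does not depend on $Q$). The right-hand side is $\Retr_X(-\eta\theta H_{ij} X) Q = G_{ij}(-\eta\theta) X Q$. These are visibly equal, which establishes the identity. I should also remark — to connect with the surrounding discussion on well-definedness — that this shows $[\Retr_{XQ}(-\eta\theta_{XQ} H_{ij} XQ)] = [\Retr_X(-\eta\theta H_{ij} X)]$, so the update descends to a well-defined map on $\Gr(n,p)$; and since $H_{ij}X$ is the horizontal lift at $X$ of the tangent vector it represents (as $X^\top(H_{ij}X)$ need not vanish, one might instead note that the relevant horizontal component is what the lift operator picks out and the argument is unchanged because $G_{ij}$ acts on the left).

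The only genuine subtlety — and the step I expect to need the most care — is justifying $\nabla f(XQ) = \nabla f(X)Q$ from the invariance $f(XQ)=f(X)$. This is a chain-rule computation: for any ambient direction $V$, $\langle \nabla f(XQ), V\rangle = \D f(XQ)[V]$, and differentiating $f(XQ)=f(X)$ at $X$ along $VQ^\top$ (note $Q$ is fixed) gives $\D f(XQ)[VQ^\top\!\cdot Q] \cdot$ — more precisely, writing $g(X) := f(XQ)$, one has $\nabla g(X) = \nabla f(XQ) Q^\top$ by the chain rule, and $g = f$ forces $\nabla f(XQ)Q^\top = \nabla f(X)$, i.e. $\nabla f(XQ) = \nabla f(X)Q$. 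Everything else is routine trace manipulation, and the fact that $G_{ij}(-\eta\theta)$ multiplies on the left (so commutes past the right $Q$-action trivially) is what makes the whole thing work; no properties of the retraction beyond its explicit Givens form are needed.
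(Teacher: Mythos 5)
Your proof is correct and follows essentially the same route as the paper: differentiate the invariance $f(XQ)=f(X)$ to get $\nabla f(XQ)=\nabla f(X)Q$, conclude $\theta_{XQ}=\theta$ by orthogonality of $Q$, and observe that the Givens matrix acts on the left so the $Q$ factors through. The extra parenthetical about horizontal lifts is tangential (and slightly muddled, since $H_{ij}X$ is indeed not horizontal in general), but it does not affect the argument required by the statement.
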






\subsection{CD on hyperbolic manifold}
\label{hyperbolic_space_sect}


We now consider the generalized hyperbolic space \citep{Bai2014MinimizationPA,xiao2023dissolving} $\gH(n, p) \coloneqq \{ X \in \sR^{n\times p} : -X^\top J X = I_p\}$, where $J = \diag(-1,1,...,1) \in \sR^{n\times n}$ is the metric tensor. When $p = 1$, this reduces to the well-known hyperbolic space (the hyperboloid model). 
The tangent space at $X \in \gH(n,p)$ is identified as
\begin{equation*}
\begin{array}{lll}
    T_X \gH(n,p) &=& \{ U \in \sR^{n\times p} : U^\top J X + X^\top J U = 0 \} \\
    &=& \{ W JX: W \in \Skew(n) \}.
\end{array}
\end{equation*}
%
The Riemannian metric on $T\gH(n,p)$ is the generalized Lorentz inner product as $\langle U, V \rangle_\gL \coloneqq \trace(U^\top J V)$.
The normal space is given by $N_X\gH(d,r) = \{ X S : S \in \Sym(p) \}$. The orthogonal projection to $T_X \gH(n,p)$ and the Riemannian gradient are derived below.
\begin{proposition}
\label{prop_hyper_ortho_grad}
The orthogonal projection of $A \in \sR^{n\times p}$ to $T_X\gH(n,p)$ is given by ${\rm Proj}_X(A) = A + X\sym(X^\top J A)$. The Riemannian gradient is $\grad f(X) = J \nabla f(X) + X \sym(X^\top \nabla f(X))$.
\end{proposition}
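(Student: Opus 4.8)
The plan is to verify the two formulas by unwinding the definitions of orthogonal projection and Riemannian gradient with respect to the generalized Lorentz metric $\langle U, V\rangle_\gL = \trace(U^\top J V)$, using the decomposition of the ambient space $\sR^{n\times p}$ into $T_X\gH(n,p) \oplus N_X\gH(n,p)$ that is already recorded in the statement (tangent space $\{WJX : W\in\Skew(n)\}$, normal space $\{XS : S\in\Sym(p)\}$). The key observation I would exploit throughout is that $J^2 = I_n$, so $J$ is its own inverse, and that the Lorentz inner product is the Euclidean inner product after pre-multiplying one argument by $J$.

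First I would handle the projection. Given $A \in \sR^{n\times p}$, I want to show ${\rm Proj}_X(A) = A + X\sym(X^\top J A)$ lands in $T_X\gH(n,p)$ and that $A - {\rm Proj}_X(A)$ lies in $N_X\gH(n,p)$. For membership in the tangent space I would check the defining equation $U^\top J X + X^\top J U = 0$ with $U = A + X\sym(X^\top JA)$: expanding gives $A^\top J X + X^\top J A + \sym(X^\top JA)X^\top J X + X^\top J X \sym(X^\top JA)$, and using the constraint $-X^\top J X = I_p$, i.e. $X^\top J X = -I_p$, the last two terms become $-2\sym(X^\top JA) = -(A^\top JX + X^\top JA)$, so the sum is zero. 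For the normal-space part, $A - {\rm Proj}_X(A) = -X\sym(X^\top JA)$, which is manifestly of the form $XS$ with $S = -\sym(X^\top J A)\in\Sym(p)$. Since the ambient space splits as a direct sum of the two subspaces (and the formula I am verifying is the unique decomposition), this establishes the projection formula. I should double-check that the orthogonality is with respect to $\langle\cdot,\cdot\rangle_\gL$: for $U = WJX$ tangent and $XS$ normal, $\langle WJX, XS\rangle_\gL = \trace(X^\top J^\top W^\top J J X S) = \trace(X^\top J W^\top X S)$; using $W^\top = -W$ and $X^\top J W X$'s symmetry properties this vanishes, confirming the two subspaces are $\gL$-orthogonal.

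Next I would derive the Riemannian gradient from its defining property $\langle U, \grad f(X)\rangle_\gL = \D f(X)[U] = \langle \nabla f(X), U\rangle$ for all $U \in T_X\gH(n,p)$. Writing out the left side, $\langle U, \grad f(X)\rangle_\gL = \trace(U^\top J \grad f(X)) = \langle U, J\grad f(X)\rangle$. So the condition becomes $\langle U, J\grad f(X) - \nabla f(X)\rangle = 0$ for all tangent $U$, i.e. $J\grad f(X) - \nabla f(X) \in N_X\gH(n,p)$ in the Euclidean sense — but care is needed here since the normal space was defined relative to the Lorentz metric, so I would instead argue directly: the claimed gradient is $\grad f(X) = J\nabla f(X) + X\sym(X^\top\nabla f(X))$, which equals ${\rm Proj}_X(J\nabla f(X))$ by the projection formula applied to $A = J\nabla f(X)$ (since $J(J\nabla f(X)) = \nabla f(X)$ gives the correction term $X\sym(X^\top\nabla f(X))$). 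Then I verify it is tangent (automatic from the projection) and that it satisfies the characterizing identity: for any $U\in T_X\gH(n,p)$, $\langle U, J\nabla f(X) + X\sym(X^\top\nabla f(X))\rangle_\gL = \trace(U^\top J J\nabla f(X)) + \trace(U^\top J X\sym(X^\top\nabla f(X)))$; the first term is $\langle U,\nabla f(X)\rangle = \D f(X)[U]$, and the second term vanishes because $U^\top JX = -X^\top JU$ is skew (from the tangent condition) while $\sym(X^\top\nabla f(X))$ is symmetric, so the trace of their product is zero.

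The main obstacle, and the only place requiring genuine care rather than bookkeeping, is keeping the two notions of orthogonality straight — the normal space $N_X\gH(n,p) = \{XS : S\in\Sym(p)\}$ is the $\gL$-orthogonal complement of the tangent space, not the Euclidean one — and correctly juggling the sign from $X^\top J X = -I_p$ (as opposed to $+I_p$ in the Stiefel case), which is exactly what makes the correction term appear with a $+$ rather than a $-$. Everything else is direct trace manipulation using $J^2 = I_n$, skew/symmetric splitting, and the cyclic property of the trace; no additional machinery beyond the definitions and the tangent/normal space characterizations already in the excerpt is needed.
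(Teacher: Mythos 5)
Your proposal is correct and follows essentially the same route as the paper: the projection is pinned down via the stated tangent/normal decomposition (you verify the formula directly where the paper solves for the normal component $S = -\sym(X^\top J A)$ by left-multiplying $X^\top J$), and the gradient is identified as ${\rm Proj}_X(J\nabla f(X))$, with the cross term vanishing because $U^\top J X$ is skew-symmetric for tangent $U$. One small slip in your supplementary $\gL$-orthogonality check: the correct product is $\trace(X^\top J W^\top J X S)$ (your expression drops a $J$), and it vanishes because $X^\top J W^\top J X$ is skew-symmetric while $S$ is symmetric, exactly as you intend.
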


\textbf{Choice of basis.}
For generalized hyperbolic manifold, we consider the basis $B_\ell = H_{ij} J X= (e_i e_j^\top - e_j e_i^\top)JX$, for $1\leq i < j \leq n$. 

\textbf{Retraction.}
Taking inspiration from our Stiefel analysis in Section \ref{sec:stiefel}, we define the map $\Retr_X(tU) \coloneqq \expm(t W J) X$ for $U = WJX \in T_X\gH(n,p) $. We next show such a map defines a valid retraction. As shown below, the retraction expression considerably simplifies along the chosen basis $H_{ij}JX$.


\begin{proposition}
\label{prop_expm_retr_hyper}
 Given a tangent vector $U = WJX \in T_X \gH(n,p)$ for some skew-symmetric matrix $W$, then $\Retr_X(t U) \coloneqq \expm(tWJ)X$ is a retraction.
\end{proposition}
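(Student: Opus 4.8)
The plan is to verify the two defining properties of a retraction directly from the definition $\Retr_X(tU) \coloneqq \expm(tWJ)X$, and then separately confirm that the output stays on the manifold $\gH(n,p)$ (since a retraction must map into $\M$). First I would check feasibility: for $Y(t) \coloneqq \expm(tWJ)X$ we need $-Y(t)^\top J Y(t) = I_p$ for all $t$. Since $W \in \Skew(n)$ we have $(WJ)^\top J = JW^\top J \cdot J = J W^\top = -JW$, so $(WJ)^\top J = -J(WJ)$, i.e. $WJ$ is skew-adjoint with respect to the bilinear form defined by $J$. This is exactly the infinitesimal version of $\expm(tWJ)^\top J \expm(tWJ) = J$, which one proves by differentiating $\phi(t) \coloneqq \expm(tWJ)^\top J \expm(tWJ)$ and showing $\phi'(t) = \expm(tWJ)^\top\big((WJ)^\top J + J(WJ)\big)\expm(tWJ) = 0$, together with $\phi(0) = J$. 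Hence $-Y(t)^\top J Y(t) = -X^\top \expm(tWJ)^\top J \expm(tWJ) X = -X^\top J X = I_p$, so $Y(t) \in \gH(n,p)$.

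Next I would check the two retraction axioms. The centering condition $\Retr_X(0) = \expm(0)X = X$ is immediate. For the local rigidity condition $\D\Retr_X(0)[U] = U$, I differentiate $t \mapsto \expm(tWJ)X$ at $t=0$, obtaining $\frac{d}{dt}\big|_{t=0}\expm(tWJ)X = WJX = U$, using $U = WJX$. More precisely, since a tangent vector at $0 \in T_X\M$ is a scaling $sU$, one checks $\frac{d}{ds}\big|_{s=0}\Retr_X(sU) = \frac{d}{ds}\big|_{s=0}\expm(sWJ)X = WJX = U$. Finally, smoothness of $(X,U) \mapsto \expm(WJ)X$ follows from smoothness of the matrix exponential; one should note the representation $U = WJX$ determines $WJ$ on the relevant subspace well enough for the construction to be well-defined as a map on the tangent bundle (any admissible choice of $W$ with $WJX = U$ gives the same derivative at $0$, which is all the axioms require — though for a genuine smooth retraction one fixes the canonical skew-symmetric representative, e.g. via the projection formula from Proposition~\ref{prop_hyper_ortho_grad}).

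The simplification along the chosen basis $B_\ell = H_{ij}JX$ is then a corollary rather than part of establishing the retraction property: with $W = H_{ij} = e_ie_j^\top - e_je_i^\top$, the matrix $H_{ij}J$ has a very sparse structure — it acts nontrivially only on the $i$-th and $j$-th coordinates — so $\expm(tH_{ij}J)$ is a $2\times2$ rotation-type block embedded in the identity. When $i \neq 1$ (so neither index touches the timelike coordinate), $(H_{ij}J)$ restricted to $\mathrm{span}(e_i,e_j)$ is $\begin{psmallmatrix}0 & 1\\ -1 & 0\end{psmallmatrix}$ up to sign, giving an ordinary Givens rotation $G_{ij}$; when $i = 1$, $J$ flips a sign and $(H_{1j}J)$ restricted to $\mathrm{span}(e_1,e_j)$ becomes $\begin{psmallmatrix}0 & 1\\ 1 & 0\end{psmallmatrix}$-type, whose exponential is the hyperbolic rotation $R_{1j}$ with $\cosh,\sinh$ entries. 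I would record this computation (squaring the $2\times 2$ block to see it is $\pm I_2$, then summing the exponential series into $\cos/\sin$ or $\cosh/\sinh$) as the explicit formula underlying the Table~\ref{table_summary_cdmfd} entry.

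The main obstacle is genuinely mild here: it is the feasibility check $\expm(tWJ)^\top J \expm(tWJ) = J$, which hinges on the identity $(WJ)^\top J + J(WJ) = 0$ for skew-symmetric $W$ — a short computation, but the one place where the metric tensor $J$ and the skew-symmetry of $W$ must be combined correctly (note $J^2 = I_n$ is used). Everything else (the two axioms, smoothness) is routine differentiation of the matrix exponential. A secondary subtlety worth a sentence is the over-parameterization: the map $U \mapsto W$ is not unique, so to call $\Retr$ a bona fide retraction one should either fix the skew-symmetric representative canonically or observe that the axioms only constrain behavior at $t=0$, where all representatives agree.
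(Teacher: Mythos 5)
Your proof is correct and takes essentially the same route as the paper: check $\Retr_X(0)=X$ and $\frac{d}{dt}\big|_{t=0}\expm(tWJ)X = WJX = U$, then establish feasibility by differentiating $L(t)=\expm(tWJ)^\top J\,\expm(tWJ)$ and using skew-symmetry of $W$ to get $L'(t)=0$, so $\expm(tWJ)$ is a Lorentz transform. One minor algebra slip: the chain $(WJ)^\top J = JW^\top J\cdot J = JW^\top = -JW$ is off (it should read $(WJ)^\top J = JW^\top J = -JWJ = -J(WJ)$), but the identity you actually invoke in the differentiation, $(WJ)^\top J + J(WJ)=0$, is correct, so the argument stands; the extra remarks on smoothness, the non-uniqueness of the representative $W$, and the Givens/hyperbolic simplification along $H_{ij}JX$ (the paper's Lemma~\ref{lemma_eij_lorentz}) are fine but not needed for this proposition.
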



In fact, $\expm(tWJ)$ is a Lorentz transform that satisfies $\expm(tWJ)^\top J \expm(tWJ) = J$, which preserves the Lorentz inner product as $(LX)^\top J LX = X^\top J X = -I_p$.  Hence by following the direction $U = \theta H_{ij} J X$, we define a coordinate type of updates on (generalized) hyperbolic manifold as $\expm(\theta H_{ij} J) X$, which can be computed efficiently similar to the Givens rotation. Particularly, when $i, j \neq 1$, $H_{ij} J = H_{ij}$ and $\expm(\theta H_{ij} J) =  G_{ij}(\theta)$ exactly recovers the Givens rotation. When $i = 1$, we have $H_{ij} J = E_{ij} \coloneqq e_i e_j^\top + e_j e_i^\top$.  We show in the following lemma that this also leads to a rotation known as the hyperbolic rotation.

\begin{lemma}
\label{lemma_eij_lorentz}
For $U = \theta H_{ij} J X$ with $1\leq i < j \leq n$, $\Retr_X(tU) = \begin{cases}
    G_{ij}(\theta) X &\text{if } i \neq 1, \\
    R_{ij}(\theta) X &\text{if } i = 1
\end{cases}$
where $R_{ij}(\theta) = I_n + (\cosh(\theta) -1) (e_i e_i^\top + e_je_j^\top) + \sinh(\theta) (e_i e_j^\top + e_j e_i^\top)$. 
\end{lemma}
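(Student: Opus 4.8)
The statement to prove is Lemma~\ref{lemma_eij_lorentz}, which computes $\expm(\theta H_{ij}J)$ explicitly for $1\le i<j\le n$. By Proposition~\ref{prop_expm_retr_hyper}, $\Retr_X(tU)=\expm(tWJ)X$ with $W=\theta H_{ij}$ (and absorbing $t$ into $\theta$), so it suffices to identify $\expm(\theta H_{ij}J)$ as $G_{ij}(\theta)$ when $i\ne 1$ and as $R_{ij}(\theta)$ when $i=1$. The plan is to split into the two cases according to whether the index $1$ (the ``timelike'' coordinate of $J=\diag(-1,1,\dots,1)$) is involved.

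**Case $i\ne 1$.** Here neither $e_i$ nor $e_j$ is the first basis vector, so $Je_i=e_i$ and $Je_j=e_j$. Hence $H_{ij}J=(e_ie_j^\top-e_je_i^\top)J=e_i e_j^\top-e_j e_i^\top=H_{ij}$. So I must compute $\expm(\theta H_{ij})$. The key observation is that $H_{ij}$ acts nontrivially only on $\mathrm{span}\{e_i,e_j\}$, on which it is $\begin{psmallmatrix}0&1\\-1&0\end{psmallmatrix}$, and is zero on the orthogonal complement. Restricting to this $2\times 2$ block, $\expm\!\big(\theta\begin{psmallmatrix}0&1\\-1&0\end{psmallmatrix}\big)=\begin{psmallmatrix}\cos\theta&\sin\theta\\-\sin\theta&\cos\theta\end{psmallmatrix}$, using $(H_{ij})^2=-(e_ie_i^\top+e_je_j^\top)$ restricted to the block (i.e. $-I_2$), so the power series splits into the $\cos$ and $\sin$ series. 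Writing this back in $n\times n$ form gives exactly $I_n+(\cos\theta-1)(e_ie_i^\top+e_je_j^\top)+\sin\theta\,(e_ie_j^\top-e_je_i^\top)=G_{ij}(\theta)$.

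**Case $i=1$.** Now $Je_i=Je_1=-e_1=-e_i$ while $Je_j=e_j$. Therefore $H_{ij}J=(e_ie_j^\top-e_je_i^\top)J=e_i e_j^\top\cdot 1-e_j e_i^\top\cdot(-1)=e_ie_j^\top+e_je_i^\top=E_{ij}$ (matching the remark preceding the lemma). So I compute $\expm(\theta E_{ij})$. Again $E_{ij}$ is supported on $\mathrm{span}\{e_i,e_j\}$, where it equals $\begin{psmallmatrix}0&1\\1&0\end{psmallmatrix}$; its square on this block is $I_2$, so the power series now splits into the $\cosh$ and $\sinh$ series: $\expm\!\big(\theta\begin{psmallmatrix}0&1\\1&0\end{psmallmatrix}\big)=\begin{psmallmatrix}\cosh\theta&\sinh\theta\\\sinh\theta&\cosh\theta\end{psmallmatrix}$. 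Lifting back to $n\times n$ form yields $I_n+(\cosh\theta-1)(e_ie_i^\top+e_je_j^\top)+\sinh\theta\,(e_ie_j^\top+e_je_i^\top)=R_{ij}(\theta)$, as claimed.

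**Main obstacle.** There is no real obstacle; the only care needed is the bookkeeping of $J$ acting on $e_i,e_j$ to reduce $H_{ij}J$ to either $H_{ij}$ or $E_{ij}$, and the elementary fact that a matrix supported on a $2$-dimensional coordinate subspace exponentiates block-wise (the parts outside the block contribute only the identity). One should also note in passing that the case $1\le i<j$ with the convention $i<j$ means $i=1$ is the only way $e_1$ can enter, so the dichotomy is exhaustive. I would present the $2\times 2$ block computation once and invoke it for both signs (the sign of $(H_{ij}J)^2$ on the block being $-I_2$ versus $+I_2$ is exactly what toggles trigonometric versus hyperbolic functions).
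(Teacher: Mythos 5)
Your proposal is correct and follows essentially the same route as the paper: split on whether $i=1$, reduce $H_{ij}J$ to $H_{ij}$ (Givens case) or $E_{ij}$ (hyperbolic case), and evaluate the matrix exponential via its power series, with $(H_{ij})^2=-(e_ie_i^\top+e_je_j^\top)$ versus $(E_{ij})^2=e_ie_i^\top+e_je_j^\top$ toggling $\cos/\sin$ into $\cosh/\sinh$. The only cosmetic difference is that you organize the series as a $2\times 2$ block restriction while the paper expands the powers of $E_{ij}$ directly in the ambient $n\times n$ form; the computation is the same.
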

When $d = 4$, $R_{ij}(\theta)$ is known as the Lorentz boost with rapidity $-\theta$ and can be thought of as rotation in the time domain. Hence, while the Givens rotation based CD updates have been explored for the orthogonal and Stiefel manifolds \citep{shalit2014coordinate,gutman2023coordinate}, our approach generalizes the Givens rotation based CD updates to hyperbolic spaces. 

\textbf{CD update.}
Similar to the Stiefel case, the proposed CD update is $\Retr_X (-\eta \theta H_{ij} JX)$, where $\theta = \langle \nabla f(X), H_{ij} JX \rangle = [\nabla f(X) X^\top J - J X \nabla f(X)^\top]_{ij}.$ In Appendix \ref{appendix:sec:hyperbolic}, we additionally derive a canonical-type metric and a Cayley retraction.

\subsection{CD on symplectic manifold}

The symplectic manifold \citep{gao2021geometry,gao2021riemannian,gao2022optimization} is defined as $\Sp(n, p) \coloneqq \{ X \in \sR^{2n \times 2p} : X^\top \Omega_n X = \Omega_p \},$ where $\Omega_k \coloneqq \begin{bmatrix}
    0 & I_k \\
    -I_k & 0
\end{bmatrix}$ is a $2k \times 2k$ skew-symmetric block matrix. The tangent space is given as 
\begin{equation*}
\begin{array}{l}
    T_X \Sp(n,p) = \{U \in \sR^{2n \times 2p} :  U^\top \Omega_n X + X^\top \Omega_n U = 0\} \\
    \ \ \ \qquad\qquad= \{ S \Omega_n X : S \in \Sym(2n)  \}.
\end{array}
\end{equation*}
Here we consider the Euclidean metric \citep{gao2021geometry} as $\langle U, V\rangle_X = \trace(U^\top V)$ for any $X \in \Sp(n,p), U, V \in T_X \Sp(n,p)$. The Riemannian gradient \citep[Proposition 3]{gao2021geometry} is given by $\grad f(X) = \nabla f(X) - \Omega_n X W_X$, where $W_X \in \Skew(2p)$ is the unique solution to the Lyapunov equation $X^\top X W + W X^\top X = 2\, \skew (X^\top \Omega_n^\top \nabla f(X))$.

\textbf{Choice of basis.}
Similar to the Stiefel and hyperbolic manifolds, we consider the basis $B_\ell = E_{ij} \Omega_n X$ for the tangent space $T_X \Sp(n,p)$, where $E_{ij} = e_ie_j^\top + e_j e_i^\top$, for $1\leq i \leq j \leq 2n$. Here, $e_i$ is the $i$-th basis in $\sR^{2n}$. 

\textbf{Retraction.}
We propose the following retraction for efficient CD updates. 

\begin{proposition}
\label{prop_exp_retr}
For any $X \in \Sp(n,p)$ and $U = S \Omega_n X \in T_X \Sp(n,p)$, the map $\Retr_X(tU) = \expm(tS \Omega_n) X$ is a retraction.
\end{proposition}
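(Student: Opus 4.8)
The plan is to verify directly the two defining properties of a retraction: $\Retr_X(0) = X$ and $\D\Retr_X(0)[U] = U$, while also confirming that the map actually lands in $\Sp(n,p)$, which is needed for the statement to even make sense. This follows the exact template already used for the Stiefel manifold (where $\Retr_X(tU) = \expm(t\Omega)X$ for $U = \Omega X$) and for the hyperbolic manifold in Proposition~\ref{prop_expm_retr_hyper}, so the argument should be short.

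\textbf{Step 1: Feasibility.} First I would show that $\expm(tS\Omega_n) X \in \Sp(n,p)$ whenever $S \in \Sym(2n)$. The key observation is that $S\Omega_n$ lies in the Lie algebra $\mathfrak{sp}(2n)$ of the symplectic group, i.e. $(S\Omega_n)^\top \Omega_n + \Omega_n (S\Omega_n) = \Omega_n^\top S \Omega_n + \Omega_n S \Omega_n = -\Omega_n S\Omega_n + \Omega_n S \Omega_n = 0$, using $\Omega_n^\top = -\Omega_n$ and $S^\top = S$. Consequently $M(t) \coloneqq \expm(tS\Omega_n)$ satisfies $M(t)^\top \Omega_n M(t) = \Omega_n$ for all $t$ (differentiate: $\frac{d}{dt}(M^\top \Omega_n M) = M^\top((S\Omega_n)^\top\Omega_n + \Omega_n(S\Omega_n))M = 0$, and at $t=0$ it equals $\Omega_n$). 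Therefore $(M(t)X)^\top \Omega_n (M(t)X) = X^\top \Omega_n X = \Omega_p$, so $M(t)X \in \Sp(n,p)$.

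\textbf{Step 2: Retraction axioms.} The first axiom is immediate: $\Retr_X(0) = \expm(0)X = X$. For the local rigidity condition, differentiate $t \mapsto \expm(tS\Omega_n)X$ at $t=0$: $\D\Retr_X(0)[U] = \frac{d}{dt}\big|_{t=0}\expm(tS\Omega_n)X = S\Omega_n X = U$, since $U = S\Omega_n X$ by hypothesis. (Strictly, one should check the map is well-defined as a function of the tangent vector $U$ rather than the representation $S$; but the retraction conditions only constrain behavior at and near $0$, and smoothness of $(t,X,U)\mapsto \expm(tS\Omega_n)X$ along any smooth choice of $S$ suffices, exactly as in the Stiefel and hyperbolic cases — one may take $S$ to depend smoothly on $U$, e.g. the symmetric part of $\Omega_n^\top U X^\dagger$ on a suitable chart, though I would likely just mirror the phrasing already used in Proposition~\ref{prop_expm_retr_hyper}.)

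I do not anticipate a genuine obstacle here; the only mild subtlety is the non-uniqueness of $S$ in the representation $U = S\Omega_n X$ (since $\Omega_n X$ has rank $2p < 2n$), which means one should be slightly careful about what "the map" means and in what sense it is smooth. The cleanest fix is to note that for the retraction axioms we only need, for each fixed $U \in T_X\Sp(n,p)$, a curve through $X$ with the right velocity, together with joint smoothness — and any smooth selection $S = S(X)$ with $S(X)\Omega_n X = U$ does the job, consistent with how \citet{gao2021geometry} and the analogous propositions above handle it. Everything else is the standard matrix-exponential computation.
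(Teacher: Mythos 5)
Your proposal is correct and takes essentially the same route as the paper: check $c(0)=X$ and $c'(0)=S\Omega_n X=U$, and obtain feasibility from the fact that $\expm(tS\Omega_n)$ preserves the symplectic form, so that $c(t)^\top\Omega_n c(t)=X^\top\Omega_n X=\Omega_p$. The only difference is that you prove the group-membership fact directly via the Lie-algebra computation (mirroring Proposition~\ref{prop_expm_retr_hyper}), whereas the paper isolates it as Lemma~\ref{lem_sp_group} and defers its proof to \citep[Proposition 4.6]{gao2021riemannian}; your remarks on the non-uniqueness of $S$ are a reasonable extra care that the paper does not discuss.
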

The above retraction further simplifies when moving along the chosen basis direction.
\begin{proposition}
\label{prop_retr_sympl_cd}
Let $U_{ij} = E_{ij} \Omega_n X \in T_X \Sp(n,p)$ for $1\leq i \leq j \leq 2n$. Then, we have $\Retr_X(\theta U_{ij}) = X + (\exp{(- \theta)} - 1) e_ie_i^\top X + (\exp(\theta) - 1) e_{n+i} e_{n+i}^\top X,$ if $i = j - n, j > n$ and $\Retr_X(\theta U_{ij}) = X + \theta E_{ij} \Omega_n X$ otherwise.
\end{proposition}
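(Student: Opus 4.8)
The plan is to start from the definition $\Retr_X(\theta U_{ij}) = \expm(\theta E_{ij}\Omega_n)X$ supplied by Proposition~\ref{prop_exp_retr}, and to compute the matrix exponential $\expm(\theta E_{ij}\Omega_n)$ explicitly by analyzing the rank and nilpotency structure of the matrix $M_{ij} \coloneqq E_{ij}\Omega_n$. Recall $E_{ij} = e_ie_j^\top + e_je_i^\top$, so $M_{ij} = e_i(\Omega_n^\top e_j)^\top + e_j(\Omega_n^\top e_i)^\top$ up to sign bookkeeping; more precisely $M_{ij} = e_i(e_j^\top\Omega_n) + e_j(e_i^\top\Omega_n)$. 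The key observation is that $e_k^\top\Omega_n = e_{k-n}^\top$ if $k > n$ and $e_k^\top\Omega_n = -e_{k+n}^\top$ if $k \le n$. So $M_{ij}$ is a sum of two rank-one terms, each of the form $e_a e_b^\top$ (up to sign), and the exponential will be governed entirely by the small matrix that $M_{ij}$ induces on $\mathrm{span}\{e_i, e_j, e_{i\pm n}, e_{j\pm n}\}$.

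The two cases split exactly according to whether these rank-one terms ``interact'' (i.e.\ whether $M_{ij}^2 \neq 0$). First I would handle the generic case: when $i = j - n$ with $j > n$ fails, one checks that $e_j^\top M_{ij} = 0$ and $e_i^\top M_{ij} = 0$ in the relevant sense, equivalently $M_{ij}^2 = 0$, so that $\expm(\theta M_{ij}) = I_{2n} + \theta M_{ij}$ and the update is $X + \theta E_{ij}\Omega_n X$ directly. The nilpotency claim $M_{ij}^2 = 0$ reduces to checking that the column indices appearing in $M_{ij}$ (namely $j\!-\!n$ or $j\!+\!n$, and $i\!-\!n$ or $i\!+\!n$) are disjoint from the row indices $\{i,j\}$; this is a finite index bookkeeping that uses $1 \le i \le j \le 2n$ and the failure of $i = j-n$. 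Second, in the special case $i = j - n$, $j > n$ (so $i \le n < j$ and $j = i + n$), the matrix $M_{ij}$ acts nontrivially only on $\mathrm{span}\{e_i, e_{n+i}\}$: one computes $M_{ij} e_i = -e_{n+i}$-type and $M_{ij}e_{n+i} = e_i$-type relations (again tracking signs via $e_k^\top\Omega_n$), revealing that $M_{ij}$ restricted to this plane is $\mathrm{diag}(-1, 1)$ (a diagonalizable involution-scaled generator), whence $\expm(\theta M_{ij})$ restricted there is $\mathrm{diag}(e^{-\theta}, e^{\theta})$ and the identity elsewhere. This yields $\expm(\theta M_{ij}) = I_{2n} + (e^{-\theta}-1)e_ie_i^\top + (e^{\theta}-1)e_{n+i}e_{n+i}^\top$, and multiplying by $X$ gives the stated formula.

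I expect the main obstacle to be purely a matter of sign- and index-tracking: correctly computing $e_k^\top\Omega_n$ and $\Omega_n e_k$ in both ranges $k \le n$ and $k > n$, and then verifying that in the special case the $2\times 2$ block is $\mathrm{diag}(-1,1)$ rather than, say, $\mathrm{diag}(1,-1)$ or an off-diagonal form — getting this wrong would swap the roles of $e^{-\theta}$ and $e^{\theta}$ or change the case condition. A clean way to organize this is to write $\Omega_n$ in block form, note $\Omega_n e_k = e_{n+k}$ for $k \le n$ and $\Omega_n e_{n+k} = -e_k$ for $k \le n$, and carry these two rules through both computations; once the block structure of $E_{ij}\Omega_n$ on the low-dimensional invariant subspace is pinned down, both the nilpotency in the generic case and the $2\times2$ diagonalization in the special case follow by inspection, and the final multiplication by $X$ (which only touches the rows of $X$ indexed by $i$, $j$, or $n+i$) is immediate.
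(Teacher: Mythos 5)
Your overall strategy is the same as the paper's: start from $\Retr_X(\theta U_{ij})=\expm(\theta E_{ij}\Omega_n)X$ and compute the exponential directly from the structure of $E_{ij}\Omega_n$, with the same case split (nilpotency, hence $I+\theta E_{ij}\Omega_n$, in the generic case; a closed form supported on positions $i$ and $n+i$ when $j=i+n$, $j>n$). The paper organizes the computation via the $2\times 2$ block partition $E_{ij}=\begin{bmatrix} E_{ij,1} & E_{ij,2}\\ E_{ij,2}^\top & E_{ij,3}\end{bmatrix}$ and powers of $E_{ij,2}$, while you use the rank-two outer-product decomposition $E_{ij}\Omega_n=e_i(e_j^\top\Omega_n)+e_j(e_i^\top\Omega_n)$; these are equivalent bookkeeping devices, and your generic-case criterion (the column support $\{i\pm n,\,j\pm n\}$ must miss the row indices $\{i,j\}$, which fails only when $j=i+n$) is correct.

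The one concrete issue is exactly the sign bookkeeping you flagged: with the paper's convention $\Omega_n=\begin{bmatrix}0 & I_n\\ -I_n & 0\end{bmatrix}$, the correct identities are $e_k^\top\Omega_n=e_{k+n}^\top$ for $k\le n$ and $e_k^\top\Omega_n=-e_{k-n}^\top$ for $k>n$ (equivalently $\Omega_n e_k=-e_{n+k}$ and $\Omega_n e_{n+k}=e_k$ for $k\le n$), i.e., the opposite of both rules you wrote. Carried literally, your rules give $E_{ij}\Omega_n=e_ie_i^\top-e_{n+i}e_{n+i}^\top$ in the special case, which would swap $e^{\theta}$ and $e^{-\theta}$ and contradict the statement; with the corrected signs one gets $E_{ij}\Omega_n=-e_ie_i^\top+e_{n+i}e_{n+i}^\top$, which is precisely your claimed $\diag(-1,1)$ on ${\rm span}\{e_i,e_{n+i}\}$ and yields $\expm(\theta E_{ij}\Omega_n)=I_{2n}+(e^{-\theta}-1)e_ie_i^\top+(e^{\theta}-1)e_{n+i}e_{n+i}^\top$ as required. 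Note also that in this case the restriction is diagonal, so the intermediate relations should read $E_{ij}\Omega_n e_i=-e_i$ and $E_{ij}\Omega_n e_{n+i}=e_{n+i}$, not the off-diagonal ``$M_{ij}e_i=-e_{n+i}$''-type relations you sketched; once the signs are fixed consistently, the rest of your argument goes through and matches the paper's proof.
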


\begin{remark}[Block coordinate updates]
\label{rmk_block_symp}
We may also consider block coordinate updates. 
Let $E = \begin{bmatrix}
    A & C \\
    C^\top &B
\end{bmatrix}$ for $A, B \in \Sym(n)$ and $C \in \sR^{n \times n}$, and we wish to update $X$ in the direction of $U = E \Omega_n X \in T_X\Sp(n,p)$. First, we consider the upper-left and bottom-right blocks, i.e., where $E = \begin{bmatrix}
    A & 0\\
    0 & 0
\end{bmatrix}$ or $E = \begin{bmatrix}
    0 & 0\\
    0 & B
\end{bmatrix}$ for arbitrary $A, B \in \Sym(n)$. Here, $\Retr_X(\theta E \Omega_n X) = X + \theta E \Omega_n X$. 
Second, if $E = \begin{bmatrix}
    0 & \diag(u) \\
    \diag(v) &0
\end{bmatrix}$, for $u,v \in \sR^{n}$, then $E\Omega_n = \begin{bmatrix}
    -\diag(u) & 0 \\
    0 &\diag(v)
\end{bmatrix}$, and therefore, $\Retr_X(\theta E\Omega_n X) = (I + \theta E \Omega_n + \frac{\theta^2}{2!} (E\Omega_n)^2 + \cdots)X = G X$, where $G = \begin{bmatrix}
    \diag(\exp{(-\theta u)}) &0 \\
    0 &\diag(\exp{(\theta v)})
\end{bmatrix}$. 
\end{remark}

\textbf{CD update.} Finally, based on the above discussion our proposed CD update is $\Retr_X(- \eta \theta E_{ij}\Omega_n X)$, where,
$
    \theta  = \langle \nabla f(X), E_{ij} \Omega_n X \rangle 
    = [\nabla f(X) X^\top \Omega_n^\top + \Omega_n X \nabla f(X)^\top]_{i,j}.
$

\subsection{CD on doubly stochastic and multinomial manifolds}

Given two marginals $\mu \in \Delta_m, \nu \in \Delta_n$ where $\Delta_k \coloneqq \{ z \in \sR^k : z \geq 0, z^\top 1_k = 1 \}$ denotes the $k$-simplex, the doubly stochastic manifold \citep{douik2019manifold,shi2021coupling,mishra2021manifold} is defined as $\Pi(\mu, \nu) \coloneqq \{ X \in \sR^{m \times n} : X > 0,  X 1_n = \mu, X^\top 1_m = \nu \}$. The tangent space is $T_X \Pi(\mu, \nu) = \{ U \in \sR^{m \times n} : U 1_n = 0, U^\top 1_m = 0 \}$, which can be endowed with the Fisher metric as $\langle U,V \rangle_X = \trace(U^\top (V \oslash X) )$, where $\odot$ and $\oslash$ represent the elementwise product and division operations, respectively. The orthogonal projection is ${\rm Proj}_X(A) = A - (\alpha 1_n^\top + 1_m \beta^\top) \odot X$, where $\alpha \in \sR^m, \beta \in \sR^n$ are solutions to the linear system:
$ \alpha \odot \mu + X \beta = A1_n, \beta \odot \nu + X^\top \alpha = A^\top 1_m$.
The Riemannian gradient is given by $\grad f(X) = {\rm Proj}_X(X \odot \nabla f(X)) = X \odot \big( \nabla f(X) - (\alpha 1_n^\top + 1_m \beta^\top)  \big)$. 


\textbf{Choice of basis.}
We consider the parameterization of the tangent space as $T_X\Pi(\mu, \nu) = \{ A C B^\top : A \in \sR^{m \times (m-1)}, B \in \sR^{n \times (n-1)}, A^\top 1_m = 0, B^\top 1_n = 0, C \in \sR^{(m-1)\times (n-1)} \}$. We notice that the tangent space has a dimension $(m-1)\times(n-1)$, and hence, we can let $A = [e_1 - e_2, ..., e_{m-1} - e_{m}] \in \sR^{m \times (m-1)}, B = [e_1 - e_2, ..., e_{n-1} - e_n] \in \sR^{n \times (n-1)}$, where we denote $e_i$ as the $i$-th canonical basis for the corresponding vector space. Hence the tangent space is parameterized by $C \in \sR^{(m-1)\times(n-1)}$. The basis we consider $B_\ell = A e_i e_j^\top B^\top = (e_i - e_{i+1})(e_j - e_{j+1})^\top$ for $i \in [m-1],  j \in [n-1]$.

\textbf{Retraction.} We consider the Sinkhorn retraction applied in the direction of the basis as $\Retr_X(- \eta \theta B_\ell) = {\rm SK}(X \odot \exp(- \eta \theta B_\ell \oslash X))$. Here, the Sinkhorn algorithm ${\rm SK}(U)$ iteratively normalize rows and columns of $U$ according to the given marginals \citep{knight2008sinkhorn}. We notice the input to the Sinkhorn algorithm only modifies a $2\times 2$ sub-matrix of $X$. It, thus, suffices to apply the Sinkhorn algorithm to the $2\times 2$ sub-matrix with the modified marginals, which largely simplifies the computation compared to running the Sinkhorn algorithm for the entire input. To this end, we define the coordinate Sinkhorn, denoted as ${\rm SK}^{ij}(U)$ or simply ${\rm cSK}(U)$ if the coordinates are clear from context, as performing the Sinkhorn algorithm for the $2\times 2$ sub-matrix formed by indices $i,i+1$ and $j,j+1$ with marginals $([\mu]_i - \sum_{k \neq j, j+1} [U]_{i k}, [\mu]_{i+1} - \sum_{k \neq j, j+1}[U]_{(i+1)k})$ and $([\nu]_j - \sum_{l \neq i,i+1} [U]_{tj}, [\nu]_{j+1} - \sum_{l \neq i,i+1} [U]_{l(j+1)} )$. The other entries of $U$ remains unchanged. We show in the next proposition that applying coordinate Sinkhorn to the basis results in a valid retraction.

\begin{proposition}
\label{prop_sinkhorn_cd}
The coordinate Sinkhorn applied to the basis $B_\ell$, i.e., ${\rm cSK}(X \odot \exp( t B_\ell \oslash X))$ is a valid retraction in the direction of $B_\ell$.
\end{proposition}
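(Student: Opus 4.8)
The plan is to verify the two defining conditions of a retraction along the single direction $B_\ell$, treating $\Retr_X$ as a curve $\gamma(t) \coloneqq {\rm cSK}\bigl(X \odot \exp(t B_\ell \oslash X)\bigr)$ and checking (i) $\gamma(0) = X$ and (ii) $\gamma'(0) = B_\ell$. First I would observe that $B_\ell \oslash X$ is supported on the $2\times 2$ block of indices $\{i,i+1\}\times\{j,j+1\}$ (since $B_\ell = (e_i-e_{i+1})(e_j-e_{j+1})^\top$), so the matrix $U(t) \coloneqq X \odot \exp(t B_\ell \oslash X)$ agrees with $X$ outside that $2\times 2$ block, and inside it is the entrywise product of the corresponding $2\times 2$ block of $X$ with $\exp\bigl(t\,[B_\ell]_{\text{block}} \oslash [X]_{\text{block}}\bigr)$. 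By the very definition of the coordinate Sinkhorn ${\rm cSK}$, only that $2\times 2$ block is rescaled to meet the residual marginals, so $\gamma(t)$ differs from $X$ only on the $2\times2$ block; condition (i) is then immediate, since at $t=0$ we have $U(0)=X$, the residual marginals are exactly those of the $2\times2$ block of $X$, and the (unique positive) Sinkhorn scaling of a matrix that already matches its marginals is the matrix itself.

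For condition (ii) the key step is a reduction to the $2\times 2$ case. Writing $a,b,c,d$ for the four entries $[X]_{i j}, [X]_{i(j+1)}, [X]_{(i+1)j}, [X]_{(i+1)(j+1)}$ and $r_1,r_2,s_1,s_2$ for the four residual marginals (which are constants not depending on $t$, since the complementary row/column sums of $U(t)$ coincide with those of $X$ on the untouched entries), the curve restricted to the block is $\Sigma(t) = {\rm SK}_{2\times2}\!\bigl(M(t); r_1,r_2,s_1,s_2\bigr)$ with $M(t) = \begin{pmatrix} a e^{t/a} & b e^{-t/b} \\ c e^{-t/c} & d e^{t/d}\end{pmatrix}$, so $M'(0) = \begin{pmatrix} 1 & -1 \\ -1 & 1\end{pmatrix}$, i.e.\ the $2\times2$ block of $B_\ell$. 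It remains to show that the Sinkhorn-projection map $P\colon M \mapsto {\rm SK}_{2\times2}(M; r_1,r_2,s_1,s_2)$ has differential at $M=\begin{pmatrix}a&b\\c&d\end{pmatrix}$ equal to the orthogonal projection (in the Fisher metric on the $2\times2$ doubly-stochastic-type manifold) onto the tangent space $\{W : W 1_2 = 0,\ W^\top 1_2 = 0\}$, which is the one-dimensional span of $\begin{pmatrix}1&-1\\-1&1\end{pmatrix}$; since $M'(0)$ already lies in that span, the chain rule gives $\gamma'(0)$ equal to the $2\times2$ block of $B_\ell$, and outside the block $\gamma$ is constant, so $\gamma'(0) = B_\ell$.

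To establish the differential-of-Sinkhorn claim I would use that ${\rm SK}$ scales $M$ to $\diag(p)\,M\,\diag(q)$ for the unique positive vectors $p,q$ (up to the standard scaling ambiguity) enforcing the prescribed marginals; differentiating the marginal constraints in $t$ and using that $M(0)$ already satisfies them (so $p(0)=q(0)=1$ after normalization) yields a small linear system showing $\tfrac{d}{dt}\bigl[\diag(p)M\diag(q)\bigr]\big|_{0}$ is precisely the Fisher-orthogonal projection of $M'(0)$ onto the tangent space — this is exactly the mechanism by which the full Sinkhorn retraction on $\Pi(\mu,\nu)$ is known to be first-order compatible with ${\rm Proj}_X$, now applied to the $2\times2$ subproblem. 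Alternatively, and perhaps more cleanly, one can simply cite that ${\rm SK}(X\odot\exp(tV\oslash X))$ is a valid retraction on $\Pi(\mu,\nu)$ for any tangent $V$ (this is essentially the Sinkhorn retraction of \citep{douik2019manifold,shi2021coupling}), note that $B_\ell$ is a tangent vector, and then argue that ${\rm cSK}$ and the global ${\rm SK}$ produce the same output on inputs perturbed only in one $2\times2$ block because the global Sinkhorn iteration leaves all other entries' row/column ratios fixed — so the two retractions agree along the curve $t\mapsto X\odot\exp(tB_\ell\oslash X)$.

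The main obstacle I anticipate is the last point: carefully justifying that running the \emph{global} Sinkhorn normalization on a matrix that differs from the doubly-stochastic $X$ only in one $2\times2$ block does indeed touch only that block (i.e.\ the fixed point of global Sinkhorn coincides with coordinate Sinkhorn here). This requires the uniqueness of the Sinkhorn scaling together with the observation that the candidate ${\rm cSK}$ output, extended by $X$ elsewhere, already has the correct full marginals and is a positive scaling of the input in each row and column — hence it \emph{is} the global Sinkhorn fixed point. Once that identification is in hand, first-order correctness of ${\rm cSK}$ follows from first-order correctness of the established global Sinkhorn retraction, and no further computation is needed.
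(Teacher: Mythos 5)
Your main line of argument is essentially the paper's own proof: check $\gamma(0)=X$, observe that both the perturbation and the residual marginals are confined to (respectively, constant outside) the $2\times 2$ block indexed by $\{i,i+1\}\times\{j,j+1\}$, and obtain $\gamma'(0)=B_\ell$ from the first-order behaviour of the $2\times 2$ Sinkhorn projection at a point whose marginals are already satisfied. The paper simply cites \citet{douik2019manifold,shi2021coupling} for ${\rm SK}(X^\flat_{ijkl}+tH^\flat_{ijkl})\approx X^\flat_{ijkl}+tH^\flat_{ijkl}$ rather than re-deriving the differential from the scaling equations, and it additionally records well-definedness (the $2\times2$ Sinkhorn limit exists, is unique, and the output has the prescribed marginals, hence stays on the manifold) via Sinkhorn's theorem. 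One simplification for your route: you do not need the full claim that the differential of the $2\times2$ Sinkhorn map is the Fisher-orthogonal projection; since $M'(0)$ is already tangent (zero row and column sums), it suffices that the scaling map is smooth near $M(0)$ and fixes matrices that already meet the marginals, which gives $DP[M'(0)]=M'(0)$ directly.

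Your ``cleaner alternative,'' however, is genuinely wrong, and it is exactly the point you flagged as the main obstacle. The ${\rm cSK}$ output, extended by $X$ outside the block, is \emph{not} of the form $\diag(p)\,\widetilde X\,\diag(q)$ for a global row/column scaling: since every entry of $X$ is strictly positive, leaving all entries outside the $2\times2$ block unchanged forces $p_kq_l=1$ for every index pair outside the block; taking $k\notin\{i,i+1\}$ against all $l$ shows all $q_l$ equal a common constant $c$ and those $p_k$ equal $1/c$, and then taking $l\notin\{j,j+1\}$ against $k\in\{i,i+1\}$ forces $p_i=p_{i+1}=1/c$ as well, i.e.\ the trivial scaling (assuming the matrix is larger than $2\times2$; otherwise ${\rm cSK}={\rm SK}$ trivially). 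Hence, by uniqueness of the Sinkhorn limit, ${\rm cSK}(\widetilde X)\neq{\rm SK}(\widetilde X)$ whenever $t\neq 0$ and the block actually needs rebalancing: the coordinate and global Sinkhorn retractions are different curves that agree only at $t=0$ and to first order there --- which is precisely what has to be proved, so this identification offers no shortcut. Stick with your primary computation (or the paper's citation of the known first-order property of the Sinkhorn map applied to the $2\times2$ subproblem).
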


We can further simplify the computation of ${\rm cSK}(X \odot \exp( t B_\ell \oslash X))$, which is equivalent to performing Sinkhorn on a $2\times 2$ matrix. Furthermore, in this case, we show in Lemma \ref{lemma_sink_2x2} that the Sinkhorn admits a closed-form solution.


\textbf{CD update.}
The CD update follows as ${\rm cSK}(X \odot \exp(- \eta \theta B_\ell \oslash X))$, where the coordinate derivative $\theta$ is computed as
$
    \theta  = \langle \nabla f(X) , B_\ell\rangle
    = [\nabla f(X)]_{ij} - [\nabla f(X)]_{i(j+1)} - [\nabla f(X)]_{(i+1)j} + [\nabla f(X)]_{(i+1)(j+1)}.
$

\begin{remark}[CD on multinomial manifold]
The developments in this section readily applies to the multinomial manifold \citep{douik2019manifold}, i.e., $\sM^{n,p} \coloneqq \{ X \in \sR^{n \times p} : X > 0, X 1_p = v \}$ where we assume $v = 1_n$ without loss of generality. The multinomial constraint corresponds to $n$ independent simplex constraints restricted to positive entries. The tangent space is $T_X\sM^{n,p} = \{ U \in \sR^{n \times p} : U 1_p = 0\} = \{ V B^\top : V \in \sR^{n \times (p-1)}, B \in \sR^{p \times (p-1)}, B^\top 1_p = 0  \}$. Thus, the basis is similarly given by $B_\ell = e_i (e_j - e_{j+1})^\top$. The Riemannian metric is the same Fisher metric. A retraction is given in $\Retr_X(tU) = \gP(X \odot(\exp(t U \oslash X)))$, where $\gP(V) = V \oslash (V 1_p 1_p^\top)$ denotes the row normalization. It should be noted that $\gP(V)$ is a special case of the Sinkhorn algorithm without column normalization. Thus, in the basis direction, we can define the coordinate projection by modifying only two entries per row. The coordinate derivative can be computed as $\theta = \langle \nabla f(X), B_\ell \rangle = [\nabla f(X)]_{ij} - [\nabla f(X)]_{i(j+1)}$. 
\end{remark}

\subsection{CD on positive (semi)definite manifold}\label{spsd_section}

The set of fixed-rank symmetric positive semi-definite manifold (SPSD) matrices \citep{vandereycken2009embedded,vandereycken2013riemannian,massart2020quotient} is defined as $\sS_{+}^{n,p} \coloneqq \{ X \in \sR^{n \times n} : X \succeq 0, {\rm rank}(X) = p \}$. When $p = n$, we recover the set of symmetric positive definite (SPD) matrices as $\sS_{+}^{n,n} \equiv \sS_{++}^n$. For the purpose of developing efficient CD updates on SPSD, we follow the parameterization purposed in \citep{massart2020quotient}, i.e., $X \in \sS_{+}^{n \times p}$ is factorized as $X = YY^\top$, $Y \in \sR^{n \times p}_*$, which is unique up to the right-action of the orthogonal group $\gO(p)$. The Riemannian gradient can be computed as $\grad f(Y)  = \nabla f(YY^\top) = 2\, \sym(\nabla f(YY^\top)) Y$ because the $T_Y \sR^{n \times p}_* \simeq \sR^{n \times p}$. 
The main advantage of this parameterization is its simple expression of retraction, i.e., $\Retr_{Y} (t \xi) = Y + t \xi$ \citep{massart2020quotient}.

\textbf{Choice of basis, retraction, and CD update.}
Using $X = YY^\top$, the optimization problem is on $\sR^{n \times p}_*$ with a simple retraction.
%
%
For the objective $f : \sS_+^{n,p} \rightarrow \sR$, we initialize $Y \in \sR^{n \times p}_*$ and update as ${\Retr}_Y ( - \eta \nabla f(YY^\top) ) = Y - \eta \nabla f(YY^\top)$ for some stepsize $\eta$.
We choose the basis to be $e_i e_j^\top$ for $i \in [n], j \in [p]$, which is orthonormal for the tangent space $T_Y\sR^{n\times p}_*$. The CD update is given by ${\Retr}_Y(- \eta \theta e_ie_j^\top)$, where $\theta = \langle \nabla f(YY^\top), e_i e_j^\top \rangle = [\nabla f(YY^\top)]_{ij}$. 
%
%
%
The simplicity of the geometry allows CD to be developed efficiently on $\sS_{+}^{n,p}$, which coincides with the Euclidean CD update in the Euclidean space. When $p = n$, we obtain a  CD update for the SPD manifold.

\textbf{CD update with the BW metric.}
The Bures-Wasserstein (BW) metric for the SPD manifold ($p  = n$) has been recently studied for various machine learning applications \citep{bhatia2019bures,han2021riemannian}. 
For the BW metric, the gradient descent update is $\Exp_{X}(- \eta \grad f(X)) = X - 2\eta (\nabla f(X) X + X \nabla f(X)) + 4 \eta^2 \nabla f(X) X \nabla f(X).$ 
Consider a basis $E_{ij}X + X E_{ij}$ where $E_{ij} = e_ie_j^\top + e_j e_i^\top$. The coordinate derivative is computed as $ \theta_{ij} = \langle E_{ij} X + X E_{ij}, \nabla f(X)  \rangle$. Finally, the CD update is given by $X - 2 \eta \theta_{ij} (E_{ij} X + X E_{ij}) + 4 \eta^2 \theta_{ij}^2 E_{ij} X E_{ij}$. Each CD update modifies two rows and two columns of $X$. 

\begin{remark}
For the SPD manifold ($p=n$), \citet{darmwal2023low} propose CD updates based on the affine-invariant metric and Cholesky factorization. They specifically focus on a class of  objective functions and show that the exponential map computations are efficient. In contrast, our choices of parameterization/metric directly leads to a faster retraction.
\end{remark}
 


\section{Algorithms and Analysis}
\label{algo_sect}


\begin{algorithm}[t]
 \caption{Riemannian coordinate descent (RCD/RCDlin)}
 \label{RCD_algo_all}
 \begin{algorithmic}[1]
  \STATE Initialize $X_0 \in \M$. $\gI$ denotes the index set. Given hyper-parameters $K,S$, and $\eta$. 
  \FOR{$k = 0,...,K-1$}
  \STATE $X_k^0 = X_k$.
  \FOR{$s = 0, ..., S-1$}
  \STATE Pick $\ell_k^s \in \gI$ and construct basis $B_{\ell_k^s}$ at $X_k^s$.
  \STATE $\theta_k^s  =  \left\{\begin{array}{l}
      \langle \nabla f(X_k^s), B_{\ell_k^s} \rangle, \text{ if } \text{RCD}, \\
      \langle \nabla f(X_k), B_{\ell_k^s} \rangle, \text{ if } \text{RCDlin.}
  \end{array}
  \right.
  $ 
  \STATE Update $X_{k}^{s+1} = \Retr_{X_k^s}(- \eta \theta_k^s B_{\ell_k^s})$. 
  \ENDFOR
  \STATE $X_{k+1} = X_k^S$.
  \ENDFOR
 \end{algorithmic} 
\end{algorithm}

\textbf{RCD.}
We present the proposed Riemannian coordinate descent (RCD) algorithm in Algorithm~\ref{RCD_algo_all}. 
%
%
The complexity of RCD per iteration is the complexity of one first-order oracle and the update complexity in Table \ref{table_summary_cdmfd}. Although for some problem settings, we may explore the structure of the objective to efficiently compute $\theta = \langle \nabla f(X), B_\ell \rangle$, for general problem instances, $\nabla f(X)$ becomes the main computational bottleneck.


\textbf{RCDLin.}
%
To reduce gradient computations in the RCD setup (especially for non-linear objectives), we also propose the Riemannian linearized coordinate descent (RCDlin) method in Algorithm~\ref{RCD_algo_all}. The main difference with RCD is that the variables are updated using an anchored gradient at $X_k$ (which does not change for inner iterations). This scheme is equivalent to taking a linearization of the original cost function at $X_k$, and in the inner iterations, we solve: $\min_{X \in \M} \big\{ g(X) \coloneqq f(X_k) + \langle \nabla f(X_k), X - X_k  \rangle \}$,
where the inner product and subtraction are defined in the ambient Euclidean space. Subsequently, the Euclidean gradient at $X_k^s$ is $\nabla g(X_k^s) = \nabla f(X_k)$, and thus, $\theta_k^s = \langle \nabla f(X_k), B_{\ell_k^s} \rangle$. For the randomized setting with $S = 1$, RCDlin is equivalent to RCD. Additionally, for linear problems where $\nabla f(X_k) = C$ ($C$ is some constant matrix), RCDlin also reduces to RCD.


\subsection*{Convergence and complexity of Algorithm~\ref{RCD_algo_all}}
We next discuss the convergence analysis of RCD and RCDlin. It follows the standard analysis for CD algorithms \citep{wright2015coordinate}. 
{Note that \citet{gutman2023coordinate} mostly consider the analysis of CD algorithms under exponential map and parallel transport operations. In contrast, we consider the more general retraction and vector transport operations.}
We also adapt our analysis for RCDlin. For brevity, our analysis is informally discussed here. The analysis is in a compact neighbourhood around a critical point, which is required for validating certain regularity assumptions, boundedness of basis and projection onto the basis, and smoothness of the objective (details in Appendix \ref{appendix:sec:proofs}).

\textbf{On RCD.}
We start by showing the convergence of RCD under randomized selection of basis and certain regularity assumptions. $L$ is the smoothness constant of the objective.

\begin{theorem}[Randomized RCD]
\label{thm_rcd_random}
Under mild assumptions, consider RCD with $S = 1$ and $\ell_k^s$ selected uniformly at random from $\gI$. Then, choosing $\eta = \Theta\big(\frac{1}{L} \big)$ leads to $\min_{ 0\leq k \leq K-1} \sE \| \grad f(X_k) \|^2_{X_k} \leq O \big(  \frac{ |\gI| L}{K} \big)$.
\end{theorem}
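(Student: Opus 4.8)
\textbf{Proof plan for Theorem~\ref{thm_rcd_random}.}

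The plan is to adapt the classical randomized coordinate descent analysis \citep{wright2015coordinate,nesterov2012efficiency} to the Riemannian retraction setting, using the pullback function $\hat f_{X_k} = f \circ \Retr_{X_k}$ as the bridge between manifold and Euclidean reasoning. First I would fix the iterate $X_k$ and the selected index $\ell = \ell_k^0$, and consider the one-dimensional slice $\phi(\theta) \coloneqq f(\Retr_{X_k}(\theta B_\ell))$. By the retraction property $\D\Retr_{X_k}(0)[B_\ell] = B_\ell$ together with \eqref{eq:main_coordinate_derivative}--\eqref{eq:theta_definition}, we have $\phi'(0) = \langle \grad f(X_k), B_\ell\rangle_{X_k} = \theta_k^0$. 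The regularity assumptions (alluded to in the theorem statement and Appendix~\ref{appendix:sec:proofs}) should give a Lipschitz-type bound on $\phi''$ — equivalently an $L$-smoothness (descent lemma) bound for the pullback along the bounded basis directions on the compact neighbourhood — yielding $\phi(-\eta\theta_k^0) \le \phi(0) - \eta (\theta_k^0)^2 + \tfrac{L\eta^2}{2}(\theta_k^0)^2 \cdot \|B_\ell\|^2$, so that choosing $\eta = \Theta(1/L)$ (absorbing the uniform bound on $\|B_\ell\|^2$ into the constant) gives the per-step decrease $f(X_{k+1}) \le f(X_k) - c\,\eta\,(\theta_k^0)^2$ for some absolute constant $c>0$.

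Next I would take expectation over the uniform random choice of $\ell \in \gI$, conditioned on $X_k$. The key identity is that averaging the squared coordinate derivatives over the basis reconstructs (up to a constant depending only on the metric/basis conditioning) the squared Riemannian gradient norm:
\begin{equation*}
    \frac{1}{|\gI|}\sum_{\ell \in \gI} \langle \grad f(X_k), B_\ell\rangle_{X_k}^2 \;\ge\; \frac{\kappa}{|\gI|}\,\|\grad f(X_k)\|_{X_k}^2,
\end{equation*}
where $\kappa>0$ is a lower bound on the relevant Gram-matrix eigenvalue of $\{B_\ell\}$ with respect to $\langle\cdot,\cdot\rangle_{X_k}$, uniform over the compact neighbourhood (this is exactly the ``boundedness of basis and projection onto the basis'' assumption). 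Combining with the per-step decrease gives
\begin{equation*}
    \sE\big[f(X_{k+1}) \mid X_k\big] \;\le\; f(X_k) - \frac{c\,\kappa\,\eta}{|\gI|}\,\|\grad f(X_k)\|_{X_k}^2.
\end{equation*}

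Finally I would telescope: taking full expectation, summing over $k = 0,\dots,K-1$, and using that $f$ is bounded below on the compact neighbourhood (so $\sE[f(X_0)] - \sE[f(X_K)] \le f(X_0) - f^\star =: \Delta$), we obtain $\sum_{k=0}^{K-1} \sE\|\grad f(X_k)\|_{X_k}^2 \le \frac{|\gI|\,\Delta}{c\,\kappa\,\eta} = O(|\gI| L \Delta)$ with $\eta = \Theta(1/L)$, whence $\min_{0\le k\le K-1}\sE\|\grad f(X_k)\|_{X_k}^2 \le O(|\gI| L / K)$. The main obstacle — and the reason the theorem is stated ``under mild assumptions'' with details deferred — is rigorously establishing that the iterates remain in the compact neighbourhood where the smoothness constant $L$, the basis bound, and the Gram-eigenvalue bound $\kappa$ are all valid; this is the standard chicken-and-egg issue in Riemannian descent analysis and is typically handled by a sublevel-set argument showing the monotone (in expectation) decrease keeps $X_k$ trapped, possibly combined with an induction or a stopping-time argument. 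A secondary technical point is verifying the descent lemma for the pullback $f\circ\Retr_{X_k}$ uniformly in the base point, which follows from smoothness of $f$, smoothness of the retraction, and compactness.
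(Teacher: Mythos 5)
Your plan matches the paper's own proof essentially step for step: a per-iteration descent bound from retraction $L$-smoothness applied along the bounded basis direction (the paper's Assumption on $\|B_\ell\|_X^2 \le c_b$), an expectation over the uniformly random index using the lower bound on $\sum_{\ell}\|\gP_{B_\ell}\grad f(X_k)\|_{X_k}^2$ (your Gram-eigenvalue constant $\kappa$ is the paper's $c_p$), and telescoping with $f(X_0)-f^\star$. The only addition is your remark about keeping iterates in the compact neighbourhood, which the paper simply assumes rather than proves, so your proposal is correct and takes the same route.
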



The convergence of RCD with cyclic selection of basis requires further assumptions that bound the difference of the constructed bases between tangent spaces. These are  reasonable given the compactness of the domain. 
\begin{theorem}[Cyclic RCD]
\label{thm_cyclic_rcd}
Under mild assumptions in addition to the ones required by randomized RCD, consider RCD with $S = |\gI|$ and $\ell_k^s = s+1$ for $s = 0, ..., |\gI| - 1$. Then, for $\eta = \Theta\big( \frac{1}{L} \big)$, we have $\min_{0 \leq  k \leq K-1} \| \grad f(X_k) \|_{X_k} \leq O \big( \frac{|\gI|^2 L}{K} \big)$.
\end{theorem}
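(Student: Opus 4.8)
The plan is to mimic the standard cyclic coordinate descent analysis in the Euclidean setting (as in \citet{wright2015coordinate}), but carefully tracking the extra error terms that arise because the bases $B_{\ell_k^s}$ are constructed at the moving iterates $X_k^s$ rather than all at $X_k$, and because we use a general retraction and vector transport rather than the exponential map and parallel transport. The key quantity to control is how much the iterate moves over a full cycle of $|\gI|$ inner steps: I would first establish, via the smoothness/descent inequality for retractions (valid on the compact neighbourhood by the regularity assumptions), a per-inner-step decrease of the form $f(X_k^{s+1}) \le f(X_k^s) - c\,\eta\, (\theta_k^s)^2$ for some constant $c>0$ when $\eta = \Theta(1/L)$. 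Summing over $s = 0,\dots,|\gI|-1$ gives $f(X_{k+1}) \le f(X_k) - c\,\eta \sum_{s} (\theta_k^s)^2$, so telescoping over $k$ bounds $\sum_{k,s}(\theta_k^s)^2$ by $O\big((f(X_0)-f^\star)/\eta\big) = O(L)$ after dividing by $K$; hence $\min_k \sum_s (\theta_k^s)^2 \le O(|\gI|\cdot$ nothing$)$ — more precisely $\min_k \sum_{s=0}^{|\gI|-1}(\theta_k^s)^2 \le O(L/K)$.

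The heart of the argument is then to relate $\|\grad f(X_k)\|_{X_k}^2$ to $\sum_{s=0}^{|\gI|-1} (\theta_k^s)^2$. At the base point $X_k^0 = X_k$ we have $\grad f(X_k) = \sum_{\ell \in \gI} \alpha_\ell B_\ell$ for coefficients $\alpha_\ell$ determined by the (possibly non-orthonormal) basis, and $\|\grad f(X_k)\|_{X_k}^2 \le \kappa \sum_{\ell} \langle \grad f(X_k), B_\ell\rangle_{X_k}^2 = \kappa \sum_\ell \langle \nabla f(X_k), B_\ell\rangle^2$, where $\kappa$ depends on the condition number of the Gram matrix of $\{B_\ell\}$, which is bounded on the compact neighbourhood — this is exactly one of the "mild assumptions". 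Now $\langle \nabla f(X_k), B_{\ell}\rangle$ is the coordinate derivative evaluated at $X_k$ with the basis built at $X_k$, whereas $\theta_k^s = \langle \nabla f(X_k^s), B_{\ell_k^s}\rangle$ uses the shifted iterate and the basis built there. The discrepancy is controlled by: (i) $\|X_k^s - X_k\| \le \sum_{t<s} \eta |\theta_k^t| \|B_{\ell_k^t}\| \le O(\eta)\sum_{t<s}|\theta_k^t|$ using boundedness of the basis; (ii) Lipschitz continuity of $\nabla f$; and (iii) the assumed Lipschitz-type bound on how $B_\ell$ varies with base point. This yields $\big|\langle \nabla f(X_k), B_\ell\rangle\big| \le |\theta_k^{s(\ell)}| + O(\eta)\sum_{t}|\theta_k^t|$, and squaring and summing over $\ell$ (using Cauchy–Schwarz, $\big(\sum_t |\theta_k^t|\big)^2 \le |\gI| \sum_t (\theta_k^t)^2$) gives $\sum_\ell \langle \nabla f(X_k), B_\ell\rangle^2 \le O(1)\cdot(1 + \eta^2 |\gI|^2)\sum_s (\theta_k^s)^2 = O(|\gI|)\sum_s(\theta_k^s)^2$ for $\eta = \Theta(1/L)$ (absorbing the $|\gI|^2$ into the $O(|\gI|)$ via one extra factor; the bookkeeping of which $|\gI|$ factors come from Cauchy–Schwarz versus the basis-count is where care is needed).

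Combining the two parts: $\min_k \|\grad f(X_k)\|_{X_k}^2 \le \kappa\, O(|\gI|) \min_k \sum_s (\theta_k^s)^2 \le O(|\gI|) \cdot O(L/K) = O(|\gI|^2 L / K)$ — wait, one should track this so the final bound matches the stated $O(|\gI|^2 L/K)$ on $\|\grad f(X_k)\|$ (not its square); I would get $\|\grad f(X_k)\|^2_{X_k} \le O(|\gI|^3 L^2/K^2)$ hence $\|\grad f(X_k)\|_{X_k} \le O(|\gI|^{3/2} L/K)$ from a naive count, so to land exactly at $O(|\gI|^2 L/K)$ either the paper's statement is a loose upper bound (in which case $O(|\gI|^{3/2})$ suffices a fortiori) or a sharper accounting of the error-propagation over the cycle is used; I would follow the latter, noting that the dominant error term in (i) telescopes more favorably than the crude Cauchy–Schwarz suggests. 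The main obstacle, and the place I expect the real work to be, is precisely this error-propagation bookkeeping across a full cycle of $|\gI|$ retraction steps with moving bases — making sure the accumulated discrepancy between $\theta_k^s$ and the "true" coordinate derivatives at $X_k$ stays lower-order, which is what forces the extra factor of $|\gI|$ relative to the randomized case in Theorem~\ref{thm_rcd_random}. All the regularity inputs (compactness of the neighbourhood, boundedness and Lipschitzness of $B_\ell$, smoothness of $f$, nondegeneracy of the basis Gram matrix) are exactly the "mild assumptions" invoked, and are deferred to Appendix~\ref{appendix:sec:proofs}.
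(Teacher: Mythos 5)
Your proposal follows essentially the same route as the paper's proof: per-inner-step sufficient decrease from retraction smoothness with $\eta=\Theta(1/L)$, summation over the cycle and telescoping over $k$, and then a comparison between the coordinate derivatives computed along the cycle and the full gradient at the anchor $X_k$, controlled by gradient Lipschitzness, the bounded movement of the iterates within a cycle, and comparability of bases at nearby points. The paper phrases this comparison intrinsically---via an isometric vector transport, the equivalence $\vartheta_0\|U\|_X^2\leq\dist^2(X,\Retr_X(U))\leq\vartheta_1\|U\|_X^2$, and the condition $\|\gP_{B_{\ell,X}}\gT_Y^X V\|_X^2\geq\upsilon\|\gP_{B_{\ell,Y}}V\|_Y^2$---whereas you argue with ambient differences and a Lipschitz dependence of $B_\ell$ on the base point, but these are the same ingredients playing the same roles.

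The only real issue is the final bookkeeping, which you flagged yourself, and it resolves in your favor without any sharper telescoping. Carried out carefully, the error propagation gives $\sum_\ell \langle \nabla f(X_k), B_{\ell,X_k}\rangle^2 \leq \big(2+O(\eta^2|\gI|^2 L_g)\big)\sum_s(\theta_k^s)^2$; the extra factor is $1+O(\eta^2|\gI|^2 L_g)$, not $O(|\gI|)$ (the "absorbing the $|\gI|^2$" step is where your slip is), and with $\eta=\Theta(1/L)$ this is exactly what produces the $|\gI|^2$ in the paper's constant $C=4Lc_b^2c_p^{-1}\upsilon^{-1}\big(1+|\gI|^2c_b^{-1}L^{-2}L_g\vartheta_1\vartheta_0^{-1}\big)$. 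Combining with the nondegeneracy condition $\sum_\ell\|\gP_{B_{\ell,X_k}}\grad f(X_k)\|_{X_k}^2\geq c_p\|\grad f(X_k)\|_{X_k}^2$ and telescoping yields $\min_{k}\|\grad f(X_k)\|_{X_k}^2\leq C\Delta_0/K=O(|\gI|^2L/K)$, i.e., a bound on the \emph{squared} gradient norm scaling as $1/K$ (not $1/K^2$); your tentative $O(|\gI|^3L^2/K^2)$ and $O(|\gI|^{3/2}L/K)$ figures do not follow from your own chain. The missing square in the theorem statement appears to be a typo inherited from the informal presentation (the randomized counterpart is stated for the squared norm, and the paper's own proof of the cyclic case also telescopes the squared-norm inequality), so your derivation lands exactly on what the paper proves.
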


\textbf{On RCDlin.}
The key idea here is to relate the coordinate derivative $\theta_k^s = \langle \nabla f(X_k), B_{\ell_k^s} \rangle$ to the correct descent derivative $\langle \nabla f(X_k^s), B_{\ell_k^s} \rangle$. 
In randomized settings, we can show the same convergence rate as RCD up to some additional constants regulated by the difference between $\theta_k^s$ and the descent direction. For the cyclic settings, however, we require $S = |\gI|$ in order to cycle through all the basis. 


\begin{theorem}[Randomized and cyclic RCDlin]
\label{thm_rand_cyc_rcdlin}
Under assumptions required in Theorems \ref{thm_rcd_random} $\&$ \ref{thm_cyclic_rcd}, 
suppose $\theta_k^s$ and $\langle \nabla f(X_k^s),  B_{\ell_k^s}\rangle$ are positively related.
Then, consider randomized RCDlin with $\ell_k^s$ selected uniformly at random from $\gI$. Choosing $\eta = \Theta(\frac{1}{L})$ leads to $\min_{0 \leq k \leq K-1, 0\leq s \leq S-1} \sE \| \grad f(X_k^s) \|_{X_k^s}^2 \leq \big(\frac{|\gI| L}{KS}\big)$. In addition, consider cyclic RCDlin with $S = |\gI|$ and $\ell_k^s = s+ 1$ for $s = 0,..., |\gI| - 1$. Also, if $\eta = \Theta\big( \frac{1}{L} \big)$, then $\min_{0\leq k \leq K-1} \| \grad f(X_k^s) \|^2_{X_k^s} \leq O\big(\frac{ |\gI|^2 L}{K}\big)$.
\end{theorem}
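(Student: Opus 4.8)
The plan is to replay the descent-lemma machinery behind the RCD analysis of Theorems~\ref{thm_rcd_random}--\ref{thm_cyclic_rcd}, the only genuinely new point being that each inner step uses the \emph{anchored} coordinate derivative $\theta_k^s = \langle \nabla f(X_k), B_{\ell_k^s}\rangle$ instead of the ``true'' one $\tilde\theta_k^s \coloneqq \langle \grad f(X_k^s), B_{\ell_k^s}\rangle_{X_k^s} = \langle \nabla f(X_k^s), B_{\ell_k^s}\rangle$; the positive-relatedness hypothesis is precisely what lets us pass between the two. First I would work on the compact neighbourhood $\mathcal{N}$ of a critical point on which the regularity facts already invoked for RCD hold: $f \ge f^\star$ on $\mathcal{N}$, the pullback $t\mapsto f(\Retr_X(tV))$ is $L$-smooth uniformly in $X\in\mathcal{N}$, the constructed bases obey $\|B_\ell\|_X^2 \le \beta$, and the lower frame bound $\sum_{\ell\in\gI}\langle V,B_\ell\rangle_X^2 \ge \gamma\|V\|_X^2$ holds for all $V\in T_X\M$. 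Monotone decrease, established below, keeps the iterates inside $\mathcal{N}$ once $\eta$ is small.

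Next I would derive the per-step inequality. From $L$-smoothness of the pullback and $\D\Retr_X(0)=\id$,
\begin{equation*}
f(X_k^{s+1}) \le f(X_k^s) - \eta\,\theta_k^s\,\tilde\theta_k^s + \tfrac{L\eta^2}{2}\,(\theta_k^s)^2\,\|B_{\ell_k^s}\|_{X_k^s}^2 .
\end{equation*}
Positive-relatedness furnishes constants $0 < c_1 \le c_2$ with $\theta_k^s\tilde\theta_k^s \ge c_1(\tilde\theta_k^s)^2$ and $(\theta_k^s)^2 \le c_2(\tilde\theta_k^s)^2$, so together with $\|B_{\ell_k^s}\|_{X_k^s}^2 \le \beta$ and $\eta = \Theta(1/L)$ chosen small we obtain $f(X_k^{s+1}) \le f(X_k^s) - c\,\eta\,(\tilde\theta_k^s)^2$ for a constant $c>0$.

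For the randomized case, conditioning on $X_k^s$ and averaging over $\ell_k^s \sim \mathrm{Unif}(\gI)$, the frame bound gives $\sE[(\tilde\theta_k^s)^2 \mid X_k^s] \ge \tfrac{\gamma}{|\gI|}\|\grad f(X_k^s)\|_{X_k^s}^2$, hence $\sE f(X_k^{s+1}) \le \sE f(X_k^s) - \tfrac{c\gamma\eta}{|\gI|}\,\sE\|\grad f(X_k^s)\|_{X_k^s}^2$. Telescoping over all $KS$ inner steps and using $f \ge f^\star$ yields $\min_{k,s}\sE\|\grad f(X_k^s)\|_{X_k^s}^2 \le \tfrac{|\gI|(f(X_0)-f^\star)}{c\gamma\,\eta\,KS} = O(|\gI|L/(KS))$, which is the randomized claim.

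For the cyclic case with $S = |\gI|$ and $\ell_k^s = s+1$, summing the per-step inequality over one outer cycle gives $f(X_{k+1}) \le f(X_k) - c\eta\sum_{s=0}^{|\gI|-1}(\tilde\theta_k^s)^2$, and it remains to lower-bound $\sum_s (\tilde\theta_k^s)^2$ by a multiple of $\|\grad f(X_k)\|_{X_k}^2$. This is the main obstacle: $\tilde\theta_k^s$ involves $\grad f(X_k^s)$ and the basis built at the \emph{moving} point $X_k^s$, not at $X_k$. I would handle it exactly as in the proof of Theorem~\ref{thm_cyclic_rcd}: an inner update moves the iterate by $O(\eta|\tilde\theta_k^s|)$, so $\dist(X_k,X_k^s) = O(\eta\sum_{s'}|\tilde\theta_k^{s'}|)$ within a cycle; feeding this into the smoothness of $X\mapsto\grad f(X)$, the assumed bound on the variation of the constructed bases across nearby tangent spaces, and the estimate $\theta_k^s - \tilde\theta_k^s = \langle \nabla f(X_k) - \nabla f(X_k^s), B_{\ell_k^s}\rangle = O(\eta\sum_{s'}|\tilde\theta_k^{s'}|)$, one replaces $\grad f(X_k^s)$ and $B_{\ell_k^s}$ by their counterparts at $X_k$ at the cost of $O(\eta)$ errors; summed over the $|\gI|$ inner steps and passed through the frame bound at $X_k$, these accumulate one extra factor of $|\gI|$, after which telescoping over $k=0,\dots,K-1$ and $f\ge f^\star$ give $\min_k\|\grad f(X_k)\|_{X_k}^2 \le O(|\gI|^2 L/K)$. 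The hard part throughout is this cyclic cross-tangent-space bookkeeping — tracking simultaneously the drift of the iterate, of the anchored-gradient error, and of the basis — which is why the cyclic claim needs the extra assumptions beyond the randomized one; the linearization itself contributes only the positive-relatedness layer, since $\theta_k^s$ and $\tilde\theta_k^s$ stay within $O(\eta)$ of each other across a cycle.
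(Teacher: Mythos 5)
Your proposal is correct and follows essentially the same route as the paper: the same retraction-smoothness per-step inequality with the positive-relatedness constants (your $c_1,c_2$ play the role of the paper's $\omega_0,\omega_1^2$) converting the anchored derivative $\theta_k^s$ into the true coordinate derivative, then expectation plus the frame/projection bound and telescoping over all $KS$ steps for the randomized case, and reuse of the cyclic-RCD machinery (iterate drift, Lipschitz gradient, basis comparability across tangent spaces, frame bound at $X_k$) for the cyclic case. The only cosmetic difference is your extra estimate on $\theta_k^s-\tilde\theta_k^s$ via the Euclidean gradient difference, which is redundant once positive-relatedness is assumed, exactly as in the paper.
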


\textbf{Complexity analysis.}
Let the cost of computing the coordinate derivative $\theta$ and CD update be $\delta$ (last column of Table \ref{table_summary_cdmfd}). Then, the total computational cost of RCD and RCDlin is $O(KS(\mathcal{F} + \delta))$ and $O(K(\mathcal{F} + S \delta) )$, respectively, where $\mathcal{F}$ denotes the cost of computing $\nabla f(X)$. We note that the proposed algorithms can parallely update in disjoint basis directions. For example, in the Stiefel/Grassmann case, we can select $n/2$ non-overlapping index pairs, which results in $n/2$ independent Givens rotation, and can be parallelized.




\begin{figure}[!t]
    \centering
    \begin{subfigure}[b]{0.22\textwidth}
        \begin{subfigure}[t]{\textwidth}
            \includegraphics[width=\textwidth]{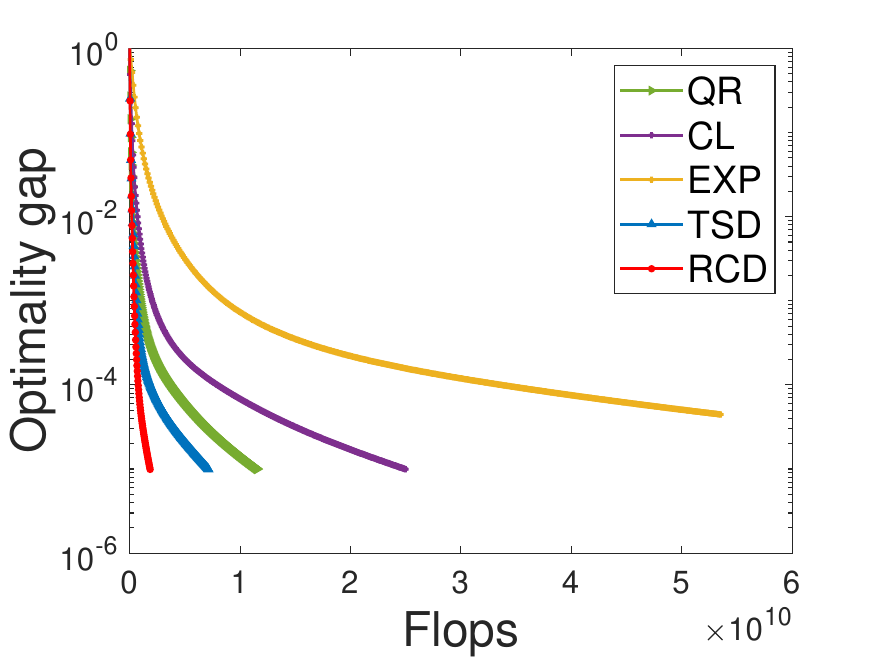}
        \end{subfigure}\vspace{.3ex}

        \begin{subfigure}[t]{\textwidth}
            \includegraphics[width=\textwidth]{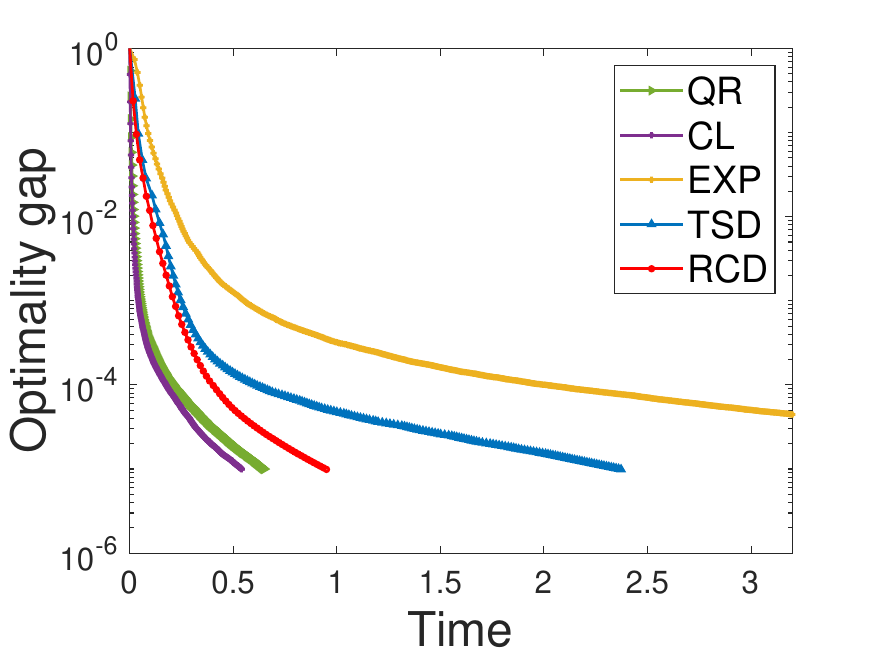}
        \end{subfigure}
        \caption{$n=200, p=150$}
    \end{subfigure}
    \begin{subfigure}[b]{0.22\textwidth}
        \begin{subfigure}[t]{\textwidth}
            \includegraphics[width=\textwidth]{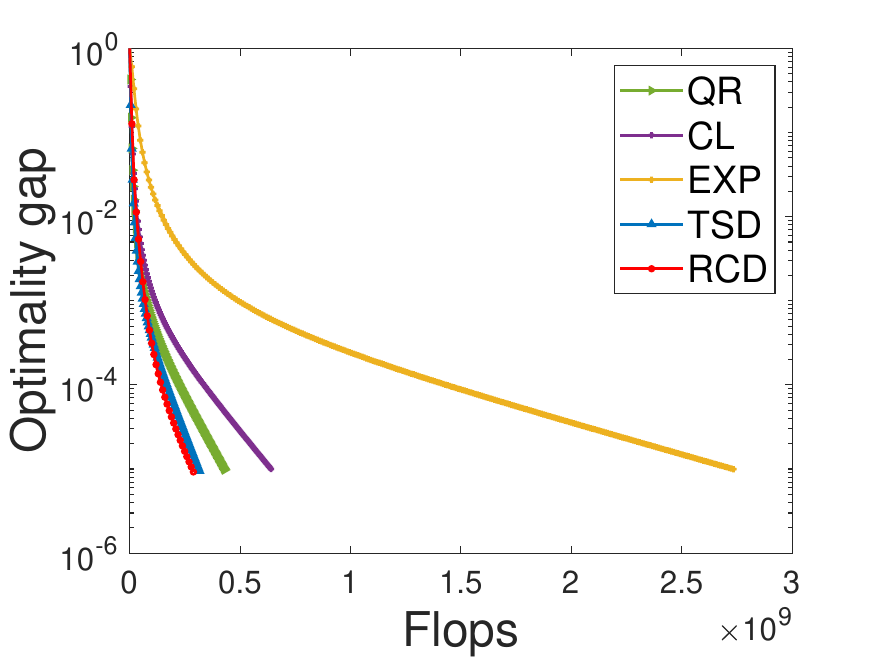}
        \end{subfigure}\vspace{.3ex}

        \begin{subfigure}[t]{\textwidth}
            \includegraphics[width=\textwidth]{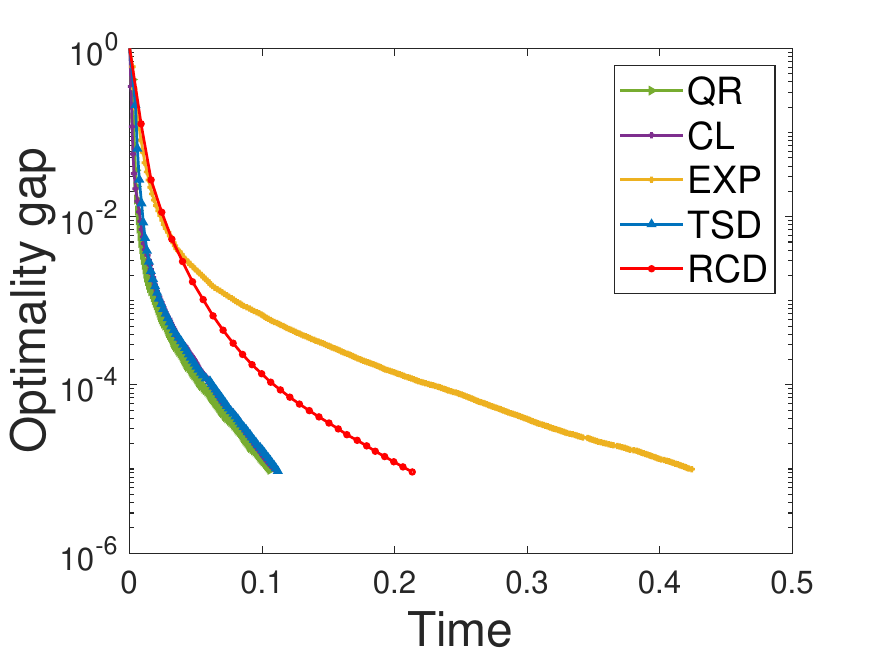}
        \end{subfigure}
        \caption{$n=200, p=50$}
    \end{subfigure}
    \caption{The Procrustes problem with varying $p$: (a) $p=150$ and (b) $p=50$. (Top row) Comparing various algorithms in terms of flop counts. (Bottom row) Comparing various algorithms in terms of runtime. We observe that our RCD algorithm obtains better flop counts than the baselines in flop counts and is competitive in terms of runtime.}
    \label{ortho_procru_fig}
\end{figure}

\begin{figure*}
    \centering
    \subfloat[PCA \label{pca_flops_fig}]{\includegraphics[width=0.22\textwidth]{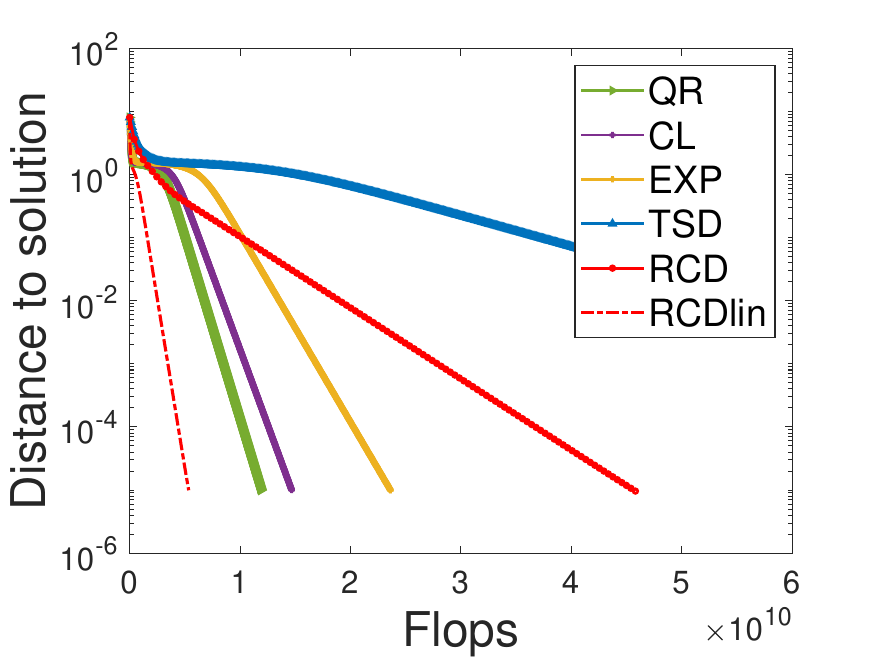}  }
    \subfloat[PCA \label{pca_basis_fig}]{\includegraphics[width=0.22\textwidth]{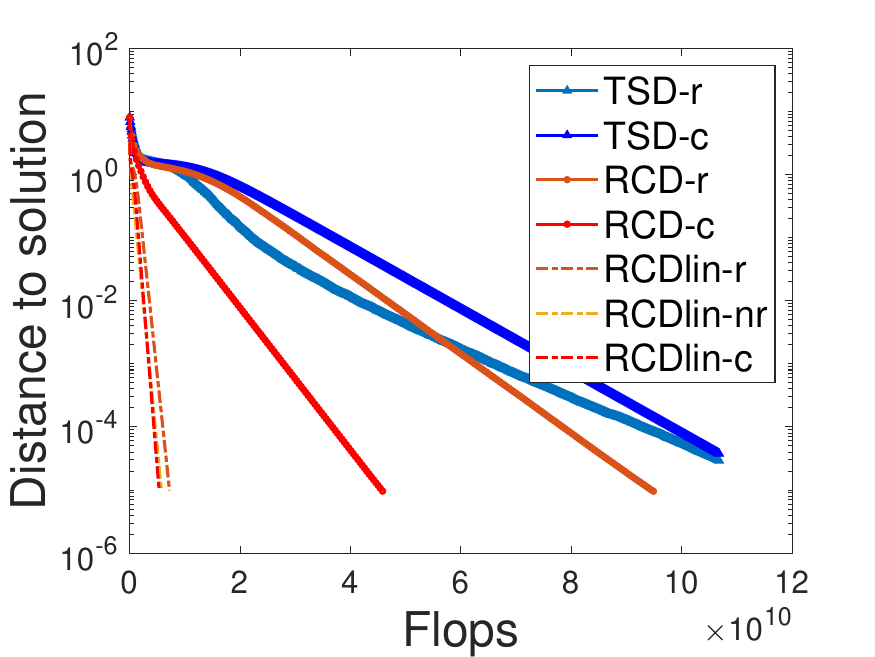}  }
    \subfloat[Procrustes \label{pro_basis_fig}]{\includegraphics[width=0.22\textwidth]{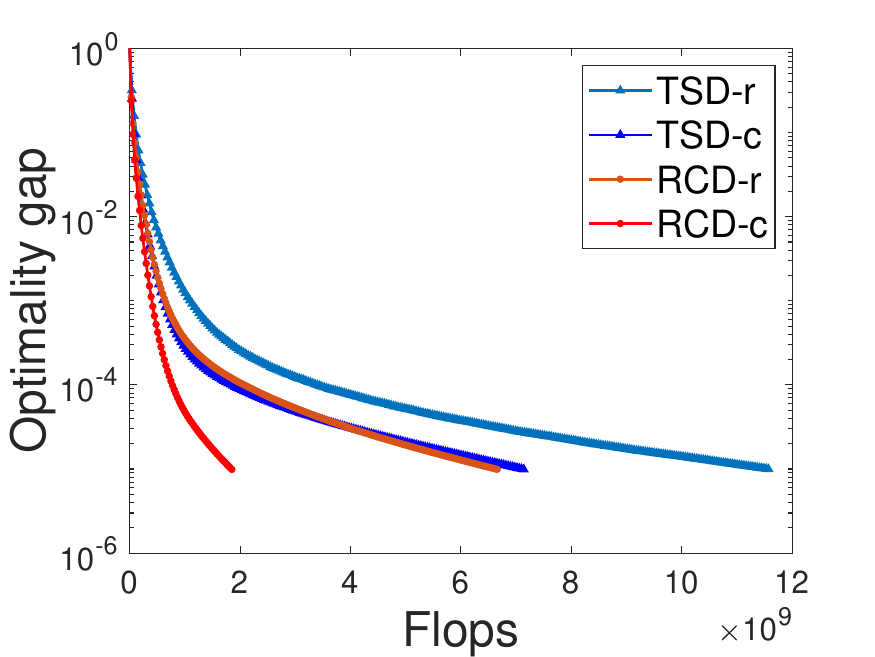}  }
    \subfloat[Procrustes \label{pro_infea_fig}]{\includegraphics[width=0.22\textwidth]{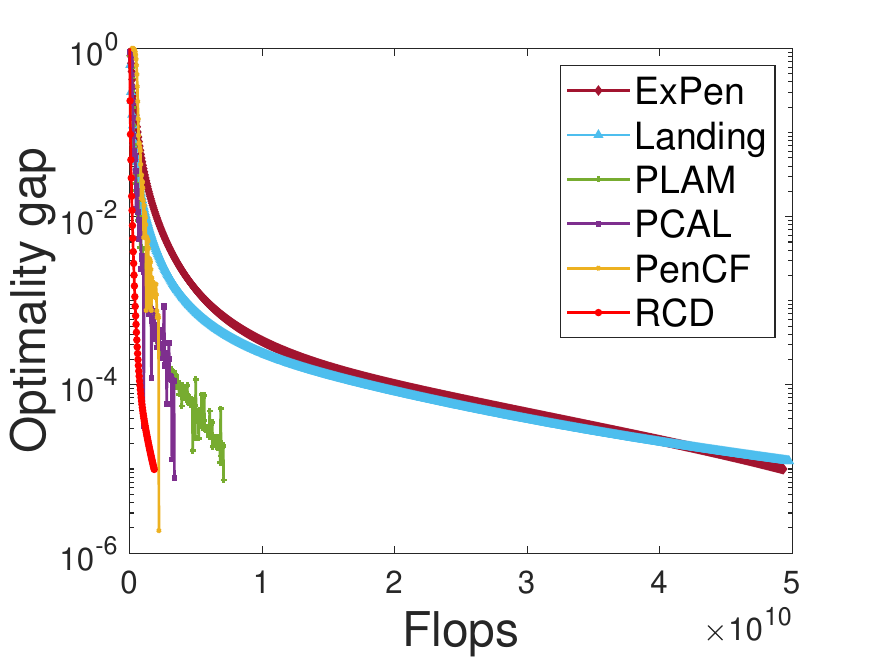} }
    \caption{(a) \& (b): Experiments on the PCA problem with $n = 200, p = 50$. In (a), we observe that our algorithm RCDlin achieves the fastest convergence due to low per-iteration cost. In (b), we compare various strategies for basis selection: cyclic selection (-c) and uniformly random selection (-r) of basis for TSD, RCD, and RCDlin, and selection without replacement (-nr) for RCDlin. We observe that cyclic and selection without replacement strategies are better than random selection. (c) \& (d): Experiments on the Procrustes problem with $n = 200, p = 150$. In (c), we again observe that cyclic selection performs better than random selection. In (d), RCD performs competitively against the infeasible methods.} 
    \label{grass_pca_fig}
\end{figure*}

\section{Experiments}
We now benchmark the performance of the proposed RCD and RCDlin algorithms in terms of computational efficiency (flop counts and/or runtime) and convergence quality (distance to optimality). 
One of the considered baselines is the Riemannian gradient descent method (RGD), a full gradient method. As RGD exploits the entire gradient direction, it has advantage over CD algorithms. However, RGD is significantly more costly than CD in every update. Our codes are implemented using the Manopt toolbox \citep{boumal2014manopt} and run on a laptop with an i5-10500 3.1GHz CPU processor. The codes are available at \url{https://github.com/andyjm3}.

\subsection{Orthogonal Procrustes and PCA}
\label{orthogonal_problem}

\textbf{Orthogonal Procrustes problem.}
We aim to solve $\min_{X \in \St(n,p)} \| X A - B \|^2 (\equiv -\langle XA, B\rangle )$ for given matrices $A \in \sR^{p \times p}$, $B \in \sR^{n \times p}$. There exists a closed-form solution provided by the (thin) SVD of $BA^\top$. For this, RCD and RCDlin have same updates as $\nabla f(X) = - BA^\top$ for $X \in \St(n,p)$. In experiments, we generate random matrices $A, B$ and evaluate the performance against optimality gap computed as $|f(X_k) - f^*|/|f^*|$.

\textbf{Baselines.} The closest baseline to RCD is TSD  \citep{gutman2023coordinate}, which is a CD method under an alternative construction of bases. As discussed, while TSD updates the columns, the proposed RCD updates the rows. Since RCD is equivalent to TSD for $p=n$, we focus only on the $p < n$ setting. For both RCD and TSD, we use the cyclic selection of basis. 
We also compare against RGD methods with QR, Cayley (CL), and exponential (EXP) retractions. 
For all the methods, we tune the stepsize. 

\textbf{Results.} In Figure \ref{ortho_procru_fig}, we show results with varying dimension $p$. 
While the proposed RCD obtains better flop counts than the baselines in flop counts, it is competitive in terms of runtime. 
We highlight that the runtime of RCD can be further improved with parallel implementation. 
In Figure \ref{pro_basis_fig}, we compare a variety of basis selection rules for both TSD and RCD: cyclic selection (`c') and uniformly random selection (`r'). 
We observe that cyclic selection is more favourable than the random selection rule for both the methods. 
We compare against full gradient infeasible methods in Figure~\ref{pro_infea_fig}, including PLAM \citep{gao2019parallelizable}, PCAL \citep{gao2019parallelizable}, PenCF \citep{xiao2022class}, ExPen \citep{xiao2021solving,xiao2023dissolving}, and Landing \citep{ablin2022fast,ablin2023infeasible}. RCD is performs competitively against infeasible methods for orthogonality constraints.





\textbf{PCA problem.}
The PCA problem solves a quadratic maximization problem as $\max_{X^\top X = I_p} \trace(X^\top A X)$ for some positive definite matrix $A$, i.e., $A \in \sS_{++}^n$. This problem is in fact an optimization problem over the Grassmann manifold because the objective is invariant to basis change of $X$. Hence, we use the Riemannian distance to the optimal solution on the Grassmann manifold to measure the performance. 
As discussed in Section~\ref{sec:grassmann}, our proposed RCD has well-defined updates on the Grassmann manifold. In contrast, TSD is not invariant to the basis change. 
For experiments, we generate $A$ with a condition number $10^3$ and with exponential decay of eigenvalues. For TSD, RCD, and RCDlin, we implement the cyclic selection of basis. 

\textbf{Results.} In Figure \ref{pca_flops_fig}, we observe that RCDlin achieves the best performance due to its low per-iteration cost. 
We note that TSD converges slowly due to non-invariance of the CD updates. In Figure~\ref{pca_basis_fig}, we compare the cyclic and uniformly random selection of the basis of RCD, RCDlin, and TSD. For RCDlin, we also implement the selection without replacement (`nr') strategy. 
We observe that cyclic and `nr' strategies are better than random selection. 



\begin{figure}[h]
\centering
\subfloat[Distill (flops)]{\includegraphics[width=0.22\textwidth]{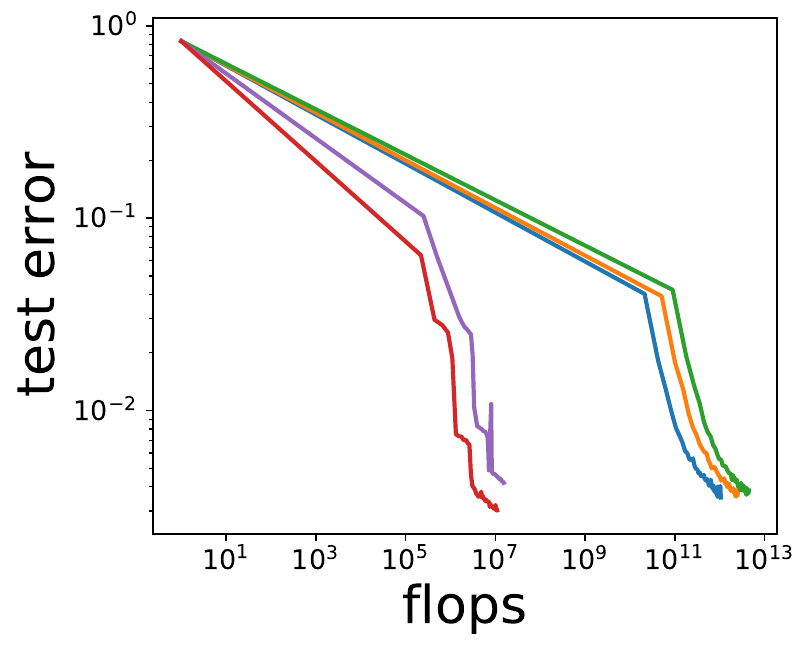}  }
    \subfloat[Distill (time) ]{\includegraphics[width=0.22\textwidth]{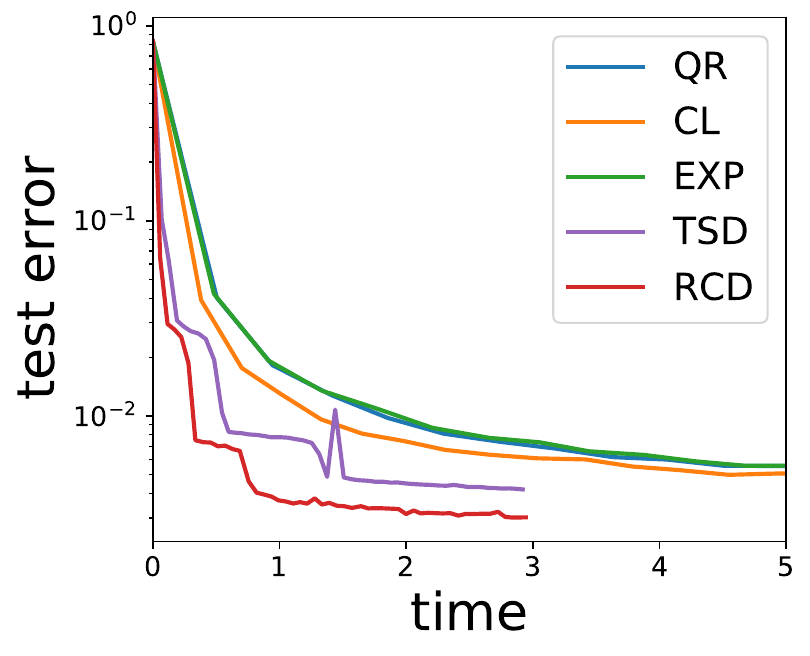}}
\caption{Experiments on the distillation problem. We observe that the proposed RCD algorithm performs better than the baselines both in terms of flop counts and runtime. }
\label{st_distill_fig}
\end{figure}

\subsection{Orthogonal deep networks for distillation}
We next evaluate RCD on a deep learning based distillation problem \citep{hinton2015distilling}. Let $\Theta$ denote the parameters of the student network (S) while $\Theta_{\rm T}$ be the optimal parameters of the teacher network (T). Then, the aim is to learn S that approximates T, i.e., minimize $\gL(\Theta) = \| \Psi_{\Theta}(X) - \Psi_{\Theta_{\rm T}}(X) \|^2$, where $\Psi_\Theta(X) \in \sR^{N \times d_{\rm out}}$ represent the output of the network for some input $X \in \sR^{N \times d_{\rm in}}$. The network architecture is detailed in Appendix \ref{appendix:sec:distillation_experiments}. 
Here, we constrain all the weights to be orthonormal, thus posing the problem as optimization over the joint space of Stiefel and Euclidean manifolds. 
For experiments, we consider a six-layer network and set $d_{\rm in} = 500$, $d_{\rm out} = 200$. We use stochastic versions of RGD and RCD where the input samples are randomly generated. 
In Figure \ref{st_distill_fig}, RCD outperforms the baselines in terms of flop counts and runtime. This is because RCD has the most cost-efficient update per iteration, while maintaining a competitive convergence rate.

\begin{figure}[!t]
\centering
\subfloat{\includegraphics[width=0.23\textwidth]{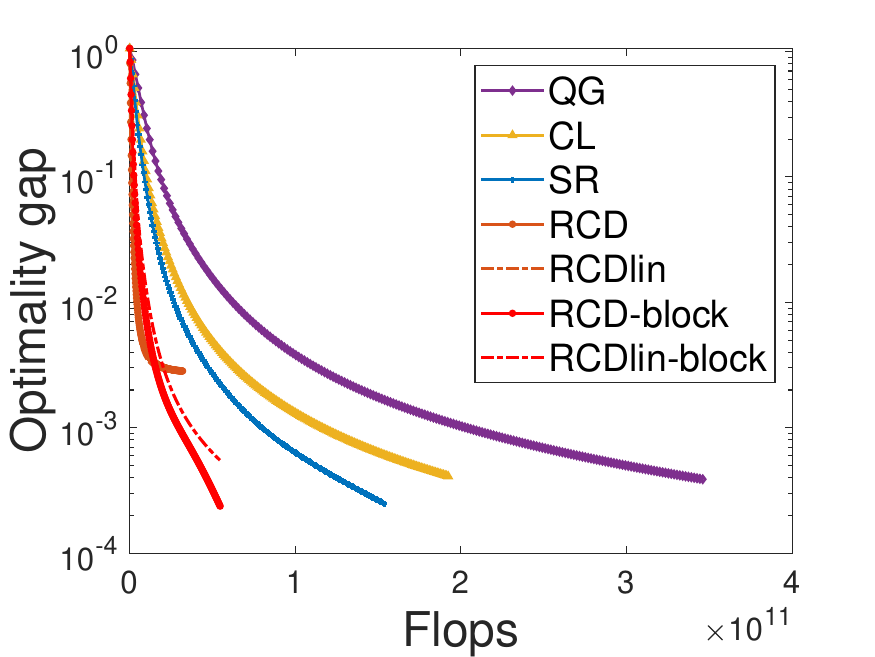}  }
\subfloat{\includegraphics[width=0.23\textwidth]{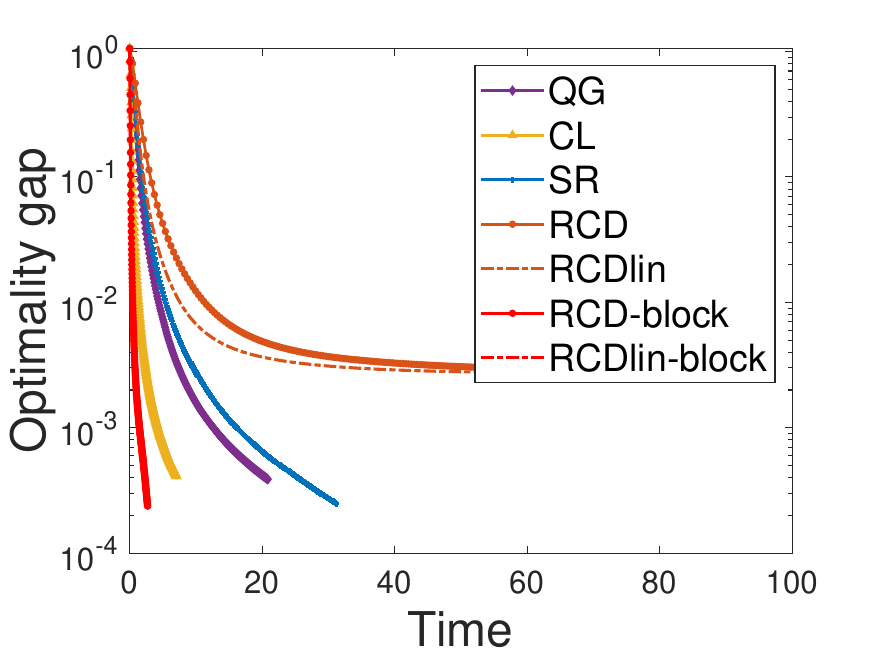}}
\caption{Experiments on the nearest matrix problem. We notice the utility of the block-update variants of our RCD and RCDlin algorithms in obtaining faster convergence.}
\label{symp_fig_embed}
\end{figure}

\subsection{Nearest matrix problem}



We consider the problem: $\min_{X \in \Sp(n,p)} \|X - A \|^2$ on the symplectic manifold \citep{gao2021riemannian}.
We follow the setting in \citep{gao2021riemannian} by generating $A$ as a random matrix with $p = n = 200$. The algorithms are evaluated on optimality gap $|f(X_k) - f^*|/|f^*|$, where $f^*$ is obtained by running the conjugate gradient algorithm with the Cayley retraction. We implement RCD and RCDlin with both CD and block CD updates (discussed in Remark \ref{rmk_block_symp}). As there is no CD baseline on the symplectic manifold, we compare against the full gradient RGD algorithms with three retractions: Cayley (`CL'), quasi-geodesic (`QG'), and SR (`SR'). 
In Figure \ref{symp_fig_embed}, RCD with block update shows clear advantage in both flop counts and runtime.

\subsection{Learning Lorentz embeddings} 

We consider the task of learning embeddings for word hierarchies, which is formulated on the hyperbolic manifold using the hyperboloid model \citep{nickel2017poincare,nickel2018learning,jawanpuria19b}. The goal is to map word pairs with hypernymy relations closer while separate those without. 
We follow the formulation in \citep{nickel2018learning}, and the details are in Appendix \ref{appendix:sec:lorentz_experiments}. 
%
%
%
\begin{figure}[t]
\centering
\subfloat{\includegraphics[width=0.21\textwidth]{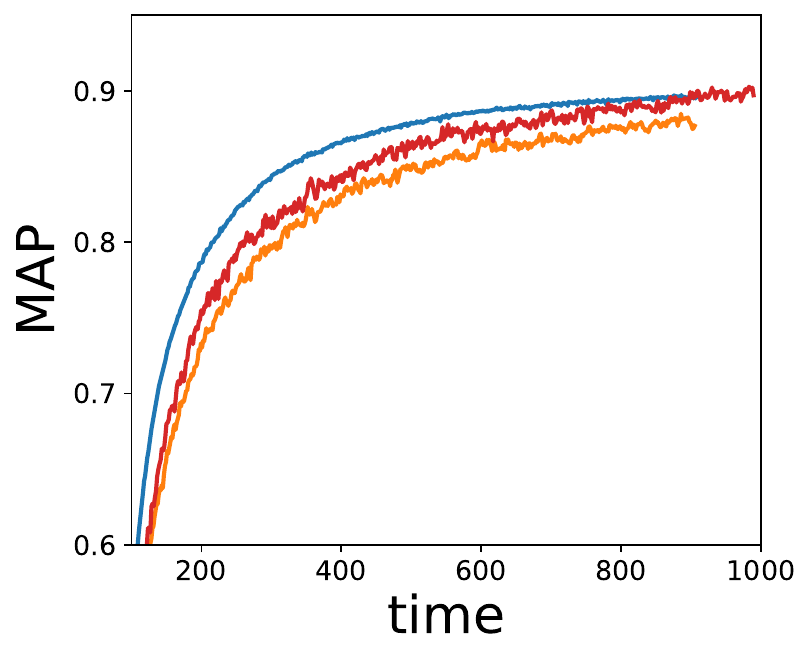}  } 
\hspace*{4pt}
\subfloat{\includegraphics[width=0.21\textwidth]{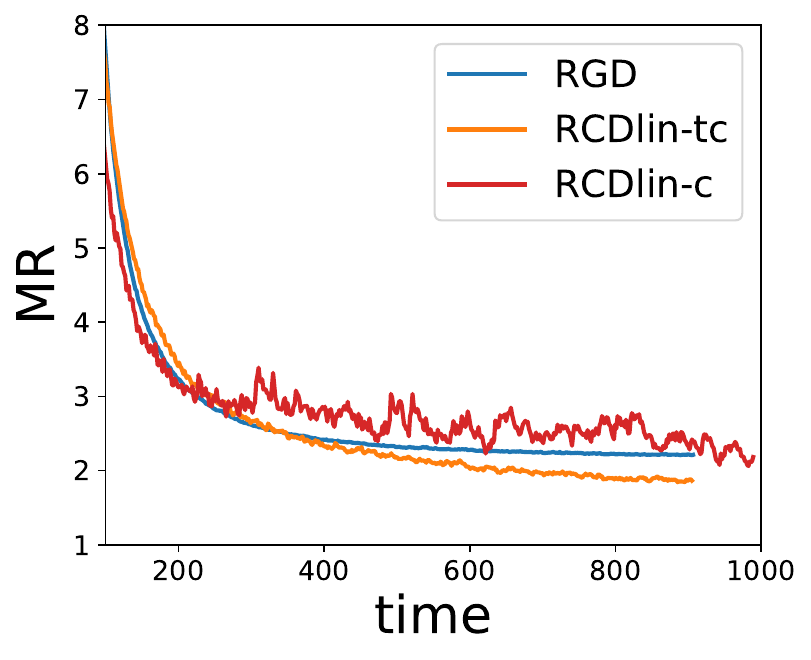}}
\caption{Experiments on learning Lorentz (hyperbolic) embeddings. The performance of our RCDlin algorithms (with cyclic and time-cyclic basis selection) is competitive to RGD. }
\label{hyper_fig_embed}
\end{figure}

For experiment settings, we train $5$-dimensional embeddings ($n = 5$) for WordNet mammals subtree \citep{miller1998wordnet}. 
We adopt the RCDlin algorithm with two selection rules for the basis $H_{ij} J X$: cyclic (`RCDlin-c') and time cyclic (`RCDlin-tc'). The cyclic selection loops through all $n(n-1)/2$ pairs per iteration. The time cyclic selection only loops through all the space-time coordinate pairs, namely $(1,2), (1,3), ..., (1, n)$, which reduces the computational cost to scale linearly with dimension $n$. For RCDlin-c and RCDlin-tc, we use a linearly-decaying stepsize, i.e., $\eta/(1+0.1\times {\rm epoch})$. For RGD we use a fixed stepsize $\eta$ which generally leads to better convergence. We tune and set $\eta = 1.0$ for RCDlin and $0.5$ for RGD. 
We use the metrics for evaluating the convergence: mean average precision (MAP) and mean rank (MR) \citep{nickel2017poincare,nickel2018learning}. 
In Figure \ref{hyper_fig_embed}, we see that RCDlin converges at a similar rate compared to RGD in terms of runtime.

\subsection{Weighted least squares (SPSD manifold)}


\begin{figure}[!t]
\centering
\subfloat[Optimization over SPD manifold with dense $A$]{\includegraphics[width=0.22\textwidth]{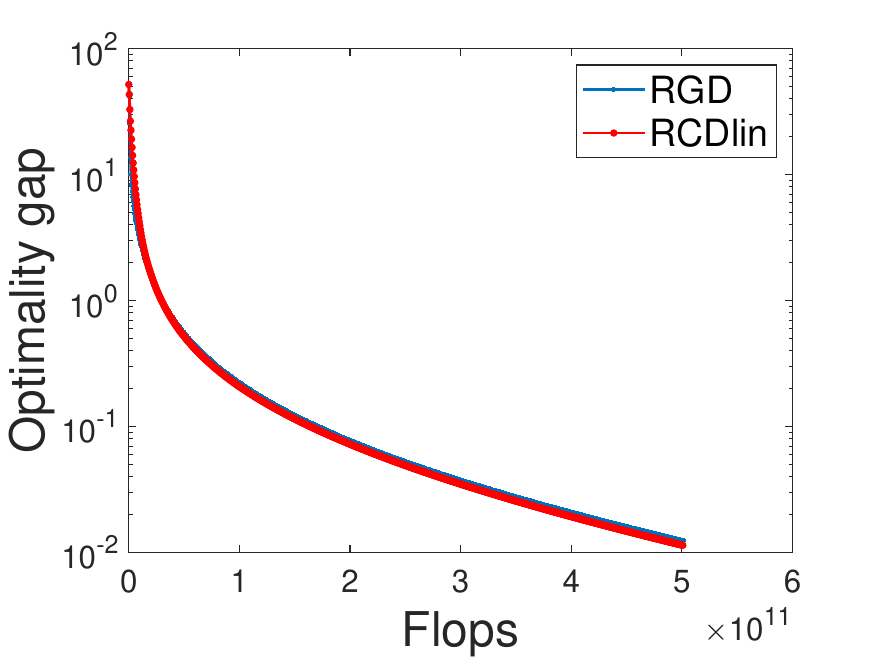}  }\ \ 
\subfloat[Optimization over SPSD manifold with sparse $A$]{\includegraphics[width=0.22\textwidth]{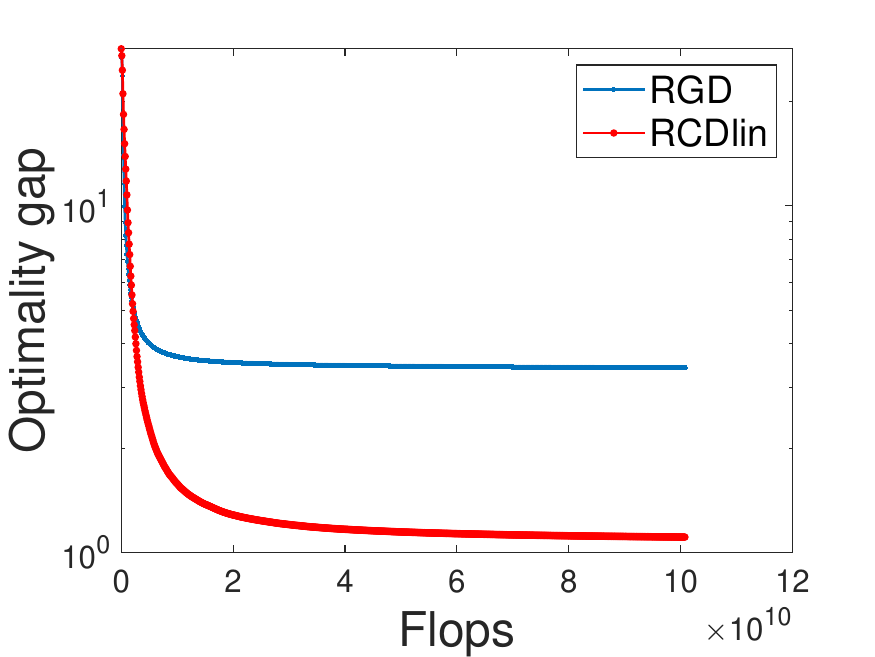}}
\caption{Experiments on the weighted least squares problem in two settings: (a) $n = p = 500$ and $A = 1_n 1_n^\top$ is a dense matrix and (b) $n = 500, p = 100$ and $A$ is a random symmetric matrix with $70\%$ entries as $1$ and others are $0$. While RGD and the proposed RCDlin have similar convergence rate in (a), RCDlin has clear advantage in (b).  }
\label{spsd_wls_figure}
\end{figure}

The weighted least squares problem is $\min_{X \in \sS_{+}^{n \times p}} \| A \odot X - B \|$, where $A \in \{ 0,1\}^{n \times n}$ masks the known entries in $B$. It is an instance of the matrix completion problem \citep{han2021riemannian}. 
For the experiment, we follow \citep{han2021riemannian} by generating $B = A \odot X^*$ where $X^*$ is an SPD/SPSD matrix with exponentially decaying eigenvalues. We consider two settings: (left) $n = p = 500$, $A = 1_n 1_n^\top$ is a dense matrix and (right) $n = 500, p = 100$ and $A$ is a random symmetric matrix with $70\%$ entries as $1$ and others are $0$. 
We compare RCDlin with RGD (for $X = YY^\top$ factorization). We set $S = np/5$ and select the coordinates randomly without replacement. In Figure \ref{spsd_wls_figure}, we observe that RCDlin performs competitively with RGD on the SPD manifold with dense $A$ while performing significantly better on the SPSD manifold with sparse $A$.




\section{Conclusion}

In this work, we discuss how to develop computationally efficient CD updates for a number of matrix manifolds. The main bottleneck in developing CD methods is on finding the right basis parameterization of the tangent space. We show precise constructions for various manifolds and propose two CD algorithms: RCD and RCDlin. RCDlin specifically reduces the gradient computations of RCD further. Our experiments show the benefit of our proposed CD updates on a number of problem instances.

\section*{Impact Statement}
This paper presents work whose goal is to advance optimization methods with applications in Machine Learning. There are many potential societal consequences of our work, none which we feel must be specifically highlighted here.

\bibliographystyle{icml2024}

\begin{thebibliography}{74}
\providecommand{\natexlab}[1]{#1}
\providecommand{\url}[1]{\texttt{#1}}
\expandafter\ifx\csname urlstyle\endcsname\relax
  \providecommand{\doi}[1]{doi: #1}\else
  \providecommand{\doi}{doi: \begingroup \urlstyle{rm}\Url}\fi

\bibitem[Ablin \& Peyr{\'e}(2022)Ablin and Peyr{\'e}]{ablin2022fast}
Ablin, P. and Peyr{\'e}, G.
\newblock Fast and accurate optimization on the orthogonal manifold without
  retraction.
\newblock In \emph{International Conference on Artificial Intelligence and
  Statistics}, pp.\  5636--5657. PMLR, 2022.

\bibitem[Ablin et~al.(2023)Ablin, Vary, Gao, and Absil]{ablin2023infeasible}
Ablin, P., Vary, S., Gao, B., and Absil, P.-A.
\newblock Infeasible deterministic, stochastic, and variance-reduction
  algorithms for optimization under orthogonality constraints.
\newblock \emph{arXiv:2303.16510}, 2023.

\bibitem[Absil et~al.(2008)Absil, Mahony, and Sepulchre]{absil2008optimization}
Absil, P.-A., Mahony, R., and Sepulchre, R.
\newblock \emph{Optimization algorithms on matrix manifolds}.
\newblock Princeton University Press, 2008.

\bibitem[Arjovsky et~al.(2016)Arjovsky, Shah, and Bengio]{arjovsky2016unitary}
Arjovsky, M., Shah, A., and Bengio, Y.
\newblock Unitary evolution recurrent neural networks.
\newblock In \emph{International Conference on Machine Learning}, pp.\
  1120--1128. PMLR, 2016.

\bibitem[Bai \& Li(2014)Bai and Li]{Bai2014MinimizationPA}
Bai, Z. and Li, R.-C.
\newblock Minimization principles and computation for the generalized linear
  response eigenvalue problem.
\newblock \emph{BIT Numerical Mathematics}, 54:\penalty0 31--54, 2014.

\bibitem[Bhatia(2009)]{bhatia2009positive}
Bhatia, R.
\newblock \emph{Positive definite matrices}.
\newblock Princeton University Press, 2009.

\bibitem[Bhatia et~al.(2019)Bhatia, Jain, and Lim]{bhatia2019bures}
Bhatia, R., Jain, T., and Lim, Y.
\newblock On the bures--wasserstein distance between positive definite
  matrices.
\newblock \emph{Expositiones Mathematicae}, 37\penalty0 (2):\penalty0 165--191,
  2019.

\bibitem[Bonnabel(2013)]{bonnabel2013stochastic}
Bonnabel, S.
\newblock Stochastic gradient descent on {R}iemannian manifolds.
\newblock \emph{IEEE Transactions on Automatic Control}, 58\penalty0
  (9):\penalty0 2217--2229, 2013.

\bibitem[Boumal(2023)]{boumal2023introduction}
Boumal, N.
\newblock \emph{An introduction to optimization on smooth manifolds}.
\newblock Cambridge University Press, 2023.

\bibitem[Boumal et~al.(2014)Boumal, Mishra, Absil, and
  Sepulchre]{boumal2014manopt}
Boumal, N., Mishra, B., Absil, P.-A., and Sepulchre, R.
\newblock Manopt, a matlab toolbox for optimization on manifolds.
\newblock \emph{The Journal of Machine Learning Research}, 15\penalty0
  (1):\penalty0 1455--1459, 2014.

\bibitem[Cardoso \& Leite(2010)Cardoso and Leite]{cardoso2010exponentials}
Cardoso, J.~R. and Leite, F.~S.
\newblock Exponentials of skew-symmetric matrices and logarithms of orthogonal
  matrices.
\newblock \emph{Journal of Computational and Applied Mathematics}, 233\penalty0
  (11):\penalty0 2867--2875, 2010.

\bibitem[Darmwal \& Rajawat(2023)Darmwal and Rajawat]{darmwal2023low}
Darmwal, Y. and Rajawat, K.
\newblock Low-complexity subspace-descent over symmetric positive definite
  manifold.
\newblock Technical report, arXiv preprint arXiv:2305.02041, 2023.

\bibitem[Douik \& Hassibi(2019)Douik and Hassibi]{douik2019manifold}
Douik, A. and Hassibi, B.
\newblock Manifold optimization over the set of doubly stochastic matrices: A
  second-order geometry.
\newblock \emph{IEEE Transactions on Signal Processing}, 67\penalty0
  (22):\penalty0 5761--5774, 2019.

\bibitem[Edelman et~al.(1998)Edelman, Arias, and Smith]{edelman1998geometry}
Edelman, A., Arias, T.~A., and Smith, S.~T.
\newblock The geometry of algorithms with orthogonality constraints.
\newblock \emph{SIAM Journal on Matrix Analysis and Applications}, 20\penalty0
  (2):\penalty0 303--353, 1998.

\bibitem[Gao et~al.(2019)Gao, Liu, and Yuan]{gao2019parallelizable}
Gao, B., Liu, X., and Yuan, Y.-x.
\newblock Parallelizable algorithms for optimization problems with
  orthogonality constraints.
\newblock \emph{SIAM Journal on Scientific Computing}, 41\penalty0
  (3):\penalty0 A1949--A1983, 2019.

\bibitem[Gao et~al.(2021{\natexlab{a}})Gao, Son, Absil, and
  Stykel]{gao2021geometry}
Gao, B., Son, N.~T., Absil, P.-A., and Stykel, T.
\newblock {Geometry of the symplectic Stiefel manifold endowed with the
  Euclidean metric}.
\newblock In \emph{Geometric Science of Information}, pp.\  789--796. Springer,
  2021{\natexlab{a}}.

\bibitem[Gao et~al.(2021{\natexlab{b}})Gao, Son, Absil, and
  Stykel]{gao2021riemannian}
Gao, B., Son, N.~T., Absil, P.-A., and Stykel, T.
\newblock {Riemannian optimization on the symplectic Stiefel manifold}.
\newblock \emph{SIAM Journal on Optimization}, 31\penalty0 (2):\penalty0
  1546--1575, 2021{\natexlab{b}}.

\bibitem[Gao et~al.(2022)Gao, Son, and Stykel]{gao2022optimization}
Gao, B., Son, N.~T., and Stykel, T.
\newblock {Optimization on the symplectic Stiefel manifold: SR
  decomposition-based retraction and applications}.
\newblock Technical report, arXiv preprint arXiv:2211.09481, 2022.

\bibitem[Gutman \& Ho-Nguyen(2023)Gutman and Ho-Nguyen]{gutman2023coordinate}
Gutman, D.~H. and Ho-Nguyen, N.
\newblock Coordinate descent without coordinates: Tangent subspace descent on
  {R}iemannian manifolds.
\newblock \emph{Mathematics of Operations Research}, 48\penalty0 (1):\penalty0
  127--159, 2023.

\bibitem[Han \& Gao(2021)Han and Gao]{han2021improved}
Han, A. and Gao, J.
\newblock Improved variance reduction methods for {R}iemannian non-convex
  optimization.
\newblock \emph{IEEE Transactions on Pattern Analysis and Machine
  Intelligence}, 44\penalty0 (11):\penalty0 7610--7623, 2021.

\bibitem[Han et~al.(2021)Han, Mishra, Jawanpuria, and Gao]{han2021riemannian}
Han, A., Mishra, B., Jawanpuria, P.~K., and Gao, J.
\newblock {On Riemannian optimization over positive definite matrices with the
  Bures-Wasserstein geometry}.
\newblock In \emph{Advances in Neural Information Processing Systems},
  volume~34, pp.\  8940--8953, 2021.

\bibitem[Han et~al.(2022)Han, Mishra, Jawanpuria, and Gao]{han2022riemannian}
Han, A., Mishra, B., Jawanpuria, P., and Gao, J.
\newblock Riemannian block {SPD} coupling manifold and its application to
  optimal transport.
\newblock \emph{Machine Learning}, pp.\  1--28, 2022.

\bibitem[Han et~al.(2023{\natexlab{a}})Han, Mishra, Jawanpuria, and
  Gao]{han2023nonconvexnonconcave}
Han, A., Mishra, B., Jawanpuria, P., and Gao, J.
\newblock Nonconvex-nonconcave min-max optimization on riemannian manifolds.
\newblock \emph{Transactions on Machine Learning Research}, 2023{\natexlab{a}}.
\newblock ISSN 2835-8856.

\bibitem[Han et~al.(2023{\natexlab{b}})Han, Mishra, Jawanpuria, Kumar, and
  Gao]{han23b}
Han, A., Mishra, B., Jawanpuria, P., Kumar, P., and Gao, J.
\newblock Riemannian hamiltonian methods for min-max optimization on manifolds.
\newblock \emph{SIAM Journal on Optimization}, 33\penalty0 (3):\penalty0
  1797--1827, 2023{\natexlab{b}}.

\bibitem[Han et~al.(2024{\natexlab{a}})Han, Mishra, Jawanpuria, and
  Gao]{han24a}
Han, A., Mishra, B., Jawanpuria, P., and Gao, J.
\newblock Differentially private {R}iemannian optimization.
\newblock \emph{Machine Learning}, 113\penalty0 (3):\penalty0 1133--1161,
  2024{\natexlab{a}}.

\bibitem[Han et~al.(2024{\natexlab{b}})Han, Mishra, Jawanpuria, and
  Takeda]{han2024framework}
Han, A., Mishra, B., Jawanpuria, P., and Takeda, A.
\newblock A framework for bilevel optimization on {R}iemannian manifolds.
\newblock Technical report, arXiv preprint arXiv:2402.03883,
  2024{\natexlab{b}}.

\bibitem[Hinton et~al.(2015)Hinton, Vinyals, and Dean]{hinton2015distilling}
Hinton, G., Vinyals, O., and Dean, J.
\newblock Distilling the knowledge in a neural network.
\newblock Technical report, arXiv preprint arXiv:1503.02531, 2015.

\bibitem[Huang et~al.(2021)Huang, Ma, and Lai]{huang2021riemannian}
Huang, M., Ma, S., and Lai, L.
\newblock A {R}iemannian block coordinate descent method for computing the
  projection robust {W}asserstein distance.
\newblock In \emph{International Conference on Machine Learning}, pp.\
  4446--4455. PMLR, 2021.

\bibitem[Huang et~al.(2015)Huang, Absil, and Gallivan]{huang2015riemannian}
Huang, W., Absil, P.-A., and Gallivan, K.~A.
\newblock A {R}iemannian symmetric rank-one trust-region method.
\newblock \emph{Mathematical Programming}, 150\penalty0 (2):\penalty0 179--216,
  2015.

\bibitem[Huang et~al.(2024)Huang, Huang, Jawanpuria, and Mishra]{huang24a}
Huang, Z., Huang, W., Jawanpuria, P., and Mishra, B.
\newblock Federated learning on {Riemannian} manifolds with differential
  privacy.
\newblock Technical report, arXiv preprint arXiv:2404.10029, 2024.

\bibitem[Jawanpuria \& Mishra(2018)Jawanpuria and Mishra]{jawanpuria18a}
Jawanpuria, P. and Mishra, B.
\newblock A unified framework for structured low-rank matrix learning.
\newblock In \emph{International Conference on Machine Learning (ICML)}, 2018.

\bibitem[Jawanpuria et~al.(2019{\natexlab{a}})Jawanpuria, Balgovind,
  Kunchukuttan, and Mishra]{jawanpuria19a}
Jawanpuria, P., Balgovind, A., Kunchukuttan, A., and Mishra, B.
\newblock Learning multilingual word embeddings in a latent metric space: a
  geometric approach.
\newblock \emph{Transactions of the Association for Computational Linguistics},
  7\penalty0 (3):\penalty0 107--120, 2019{\natexlab{a}}.

\bibitem[Jawanpuria et~al.(2019{\natexlab{b}})Jawanpuria, Meghwanshi, and
  Mishra]{jawanpuria19b}
Jawanpuria, P., Meghwanshi, M., and Mishra, B.
\newblock Low-rank approximations of hyperbolic embeddings.
\newblock In \emph{IEEE Conference on Decision and Control},
  2019{\natexlab{b}}.

\bibitem[Jawanpuria et~al.(2020)Jawanpuria, Meghwanshi, and
  Mishra]{jawanpuria20a}
Jawanpuria, P., Meghwanshi, M., and Mishra, B.
\newblock Geometry-aware domain adaptation for unsupervised alignment of word
  embeddings.
\newblock In \emph{Annual Meeting of the Association for Computational
  Linguistics}, 2020.

\bibitem[Jiang et~al.(2022)Jiang, Zhang, Qiu, Xiao, Long, and
  Yang]{jianggivens}
Jiang, Y., Zhang, H., Qiu, Y., Xiao, Y., Long, B., and Yang, W.-Y.
\newblock Givens coordinate descent methods for rotation matrix learning in
  trainable embedding indexes.
\newblock In \emph{International Conference on Learning Representations}, 2022.

\bibitem[Kasai et~al.(2019)Kasai, Jawanpuria, and Mishra]{kasai19a}
Kasai, H., Jawanpuria, P., and Mishra, B.
\newblock Riemannian adaptive stochastic gradient algorithms on matrix
  manifolds.
\newblock In \emph{International Conference on Machine Learning (ICML)}, 2019.

\bibitem[Knight(2008)]{knight2008sinkhorn}
Knight, P.~A.
\newblock The {Sinkhorn--Knopp} algorithm: convergence and applications.
\newblock \emph{SIAM Journal on Matrix Analysis and Applications}, 30\penalty0
  (1):\penalty0 261--275, 2008.

\bibitem[Kressner et~al.(2014)Kressner, Steinlechner, and
  Vandereycken]{kressner2014low}
Kressner, D., Steinlechner, M., and Vandereycken, B.
\newblock Low-rank tensor completion by {R}iemannian optimization.
\newblock \emph{BIT Numerical Mathematics}, 54:\penalty0 447--468, 2014.

\bibitem[Lezcano~Casado(2019)]{lezcano2019trivializations}
Lezcano~Casado, M.
\newblock Trivializations for gradient-based optimization on manifolds.
\newblock In \emph{Advances in Neural Information Processing Systems},
  volume~32, 2019.

\bibitem[Li \& Ma(2022)Li and Ma]{li22a}
Li, J. and Ma, S.
\newblock Federated learning on {Riemannian} manifolds.
\newblock Technical report, arXiv preprint arXiv:2206.05668, 2022.

\bibitem[Luo \& Tseng(1992)Luo and Tseng]{luo1992convergence}
Luo, Z.-Q. and Tseng, P.
\newblock On the convergence of the coordinate descent method for convex
  differentiable minimization.
\newblock \emph{Journal of Optimization Theory and Applications}, 72\penalty0
  (1):\penalty0 7--35, 1992.

\bibitem[Massart \& Abrol(2022)Massart and Abrol]{massart2022coordinate}
Massart, E. and Abrol, V.
\newblock Coordinate descent on the orthogonal group for recurrent neural
  network training.
\newblock In \emph{AAAI Conference on Artificial Intelligence}, volume~36, pp.\
   7744--7751, 2022.

\bibitem[Massart \& Absil(2020)Massart and Absil]{massart2020quotient}
Massart, E. and Absil, P.-A.
\newblock Quotient geometry with simple geodesics for the manifold of
  fixed-rank positive-semidefinite matrices.
\newblock \emph{SIAM Journal on Matrix Analysis and Applications}, 41\penalty0
  (1):\penalty0 171--198, 2020.

\bibitem[Miller(1998)]{miller1998wordnet}
Miller, G.~A.
\newblock \emph{{WordNet}: An electronic lexical database}.
\newblock MIT press, 1998.

\bibitem[Mishra et~al.(2019)Mishra, Kasai, Jawanpuria, and Saroop]{mishra19a}
Mishra, B., Kasai, H., Jawanpuria, P., and Saroop, A.
\newblock A {R}iemannian gossip approach to subspace learning on {G}rassmann
  manifold.
\newblock \emph{Machine Learning}, 108\penalty0 (10):\penalty0 1783--1803,
  2019.

\bibitem[Mishra et~al.(2021)Mishra, Satyadev, Kasai, and
  Jawanpuria]{mishra2021manifold}
Mishra, B., Satyadev, N., Kasai, H., and Jawanpuria, P.
\newblock Manifold optimization for non-linear optimal transport problems.
\newblock Technical report, arXiv preprint arXiv:2103.00902, 2021.

\bibitem[Nesterov(2012)]{nesterov2012efficiency}
Nesterov, Y.
\newblock Efficiency of coordinate descent methods on huge-scale optimization
  problems.
\newblock \emph{SIAM Journal on Optimization}, 22\penalty0 (2):\penalty0
  341--362, 2012.

\bibitem[Nickel \& Kiela(2017)Nickel and Kiela]{nickel2017poincare}
Nickel, M. and Kiela, D.
\newblock Poincar{\'e} embeddings for learning hierarchical representations.
\newblock In \emph{Advances in Neural Information Processing Systems},
  volume~30, 2017.

\bibitem[Nickel \& Kiela(2018)Nickel and Kiela]{nickel2018learning}
Nickel, M. and Kiela, D.
\newblock Learning continuous hierarchies in the {L}orentz model of hyperbolic
  geometry.
\newblock In \emph{International Conference on Machine Learning}, pp.\
  3779--3788. PMLR, 2018.

\bibitem[Nimishakavi et~al.(2018)Nimishakavi, Jawanpuria, and
  Mishra]{nimishakavi18a}
Nimishakavi, M., Jawanpuria, P., and Mishra, B.
\newblock A dual framework for low-rank tensor completion.
\newblock In \emph{Conference on Neural Information Processing Systems
  (NeurIPS)}, 2018.

\bibitem[Peng \& Vidal(2023)Peng and Vidal]{peng2023block}
Peng, L. and Vidal, R.
\newblock Block coordinate descent on smooth manifolds.
\newblock Technical report, arXiv preprint arXiv:2305.14744, 2023.

\bibitem[Pennec et~al.(2006)Pennec, Fillard, and Ayache]{pennec06a}
Pennec, X., Fillard, P., and Ayache, N.
\newblock {A Riemannian Framework for Tensor Computing}.
\newblock \emph{{International Journal of Computer Vision}}, 66\penalty0
  (1):\penalty0 41--66, 2006.

\bibitem[Reimherr et~al.(2021)Reimherr, Bharath, and
  Soto]{reimherr2021differential}
Reimherr, M., Bharath, K., and Soto, C.
\newblock Differential privacy over riemannian manifolds.
\newblock In \emph{Advances in Neural Information Processing Systems},
  volume~34, pp.\  12292--12303, 2021.

\bibitem[Sato et~al.(2019)Sato, Kasai, and Mishra]{sato2019riemannian}
Sato, H., Kasai, H., and Mishra, B.
\newblock Riemannian stochastic variance reduced gradient algorithm with
  retraction and vector transport.
\newblock \emph{SIAM Journal on Optimization}, 29\penalty0 (2):\penalty0
  1444--1472, 2019.

\bibitem[Shalit \& Chechik(2014)Shalit and Chechik]{shalit2014coordinate}
Shalit, U. and Chechik, G.
\newblock Coordinate-descent for learning orthogonal matrices through {G}ivens
  rotations.
\newblock In \emph{International Conference on Machine Learning}, pp.\
  548--556. PMLR, 2014.

\bibitem[Shi et~al.(2021)Shi, Gao, Hong, Boris~Choy, and Wang]{shi2021coupling}
Shi, D., Gao, J., Hong, X., Boris~Choy, S., and Wang, Z.
\newblock Coupling matrix manifolds assisted optimization for optimal transport
  problems.
\newblock \emph{Machine Learning}, 110:\penalty0 533--558, 2021.

\bibitem[Siegel(2020)]{siegel2019accelerated}
Siegel, J.~W.
\newblock Accelerated optimization with orthogonality constraints.
\newblock \emph{Journal of Computational Mathematics}, 39\penalty0
  (2):\penalty0 207--206, 2020.

\bibitem[Sinkhorn(1967)]{sinkhorn1967diagonal}
Sinkhorn, R.
\newblock Diagonal equivalence to matrices with prescribed row and column sums.
\newblock \emph{The American Mathematical Monthly}, 74\penalty0 (4):\penalty0
  402--405, 1967.

\bibitem[Tripuraneni et~al.(2018)Tripuraneni, Flammarion, Bach, and
  Jordan]{tripuraneni2018averaging}
Tripuraneni, N., Flammarion, N., Bach, F., and Jordan, M.~I.
\newblock Averaging stochastic gradient descent on {R}iemannian manifolds.
\newblock In \emph{Conference on Learning Theory}, pp.\  650--687. PMLR, 2018.

\bibitem[Utpala et~al.(2022)Utpala, Han, Jawanpuria, and
  Mishra]{utpala2022improved}
Utpala, S., Han, A., Jawanpuria, P., and Mishra, B.
\newblock Improved differentially private {Riemannian} optimization: Fast
  sampling and variance reduction.
\newblock \emph{Transactions on Machine Learning Research}, 2022.

\bibitem[Vandereycken(2013)]{vandereycken2013low}
Vandereycken, B.
\newblock Low-rank matrix completion by {R}iemannian optimization.
\newblock \emph{SIAM Journal on Optimization}, 23\penalty0 (2):\penalty0
  1214--1236, 2013.

\bibitem[Vandereycken et~al.(2009)Vandereycken, Absil, and
  Vandewalle]{vandereycken2009embedded}
Vandereycken, B., Absil, P.-A., and Vandewalle, S.
\newblock Embedded geometry of the set of symmetric positive semidefinite
  matrices of fixed rank.
\newblock In \emph{IEEE/SP Workshop on Statistical Signal Processing}, pp.\
  389--392. IEEE, 2009.

\bibitem[Vandereycken et~al.(2013)Vandereycken, Absil, and
  Vandewalle]{vandereycken2013riemannian}
Vandereycken, B., Absil, P.-A., and Vandewalle, S.
\newblock A {R}iemannian geometry with complete geodesics for the set of
  positive semidefinite matrices of fixed rank.
\newblock \emph{IMA Journal of Numerical Analysis}, 33\penalty0 (2):\penalty0
  481--514, 2013.

\bibitem[Wang et~al.(2020)Wang, Chen, Chakraborty, and Yu]{wang2020orthogonal}
Wang, J., Chen, Y., Chakraborty, R., and Yu, S.~X.
\newblock Orthogonal convolutional neural networks.
\newblock In \emph{Conference on Computer Vision and Pattern Recognition}, pp.\
   11505--11515, 2020.

\bibitem[Wen \& Yin(2013)Wen and Yin]{wen2013feasible}
Wen, Z. and Yin, W.
\newblock A feasible method for optimization with orthogonality constraints.
\newblock \emph{Mathematical Programming}, 142\penalty0 (1-2):\penalty0
  397--434, 2013.

\bibitem[Wilson \& Leimeister(2018)Wilson and Leimeister]{wilson2018gradient}
Wilson, B. and Leimeister, M.
\newblock Gradient descent in hyperbolic space.
\newblock Technical report, arXiv preprint arXiv:1805.08207, 2018.

\bibitem[Wright(2015)]{wright2015coordinate}
Wright, S.~J.
\newblock Coordinate descent algorithms.
\newblock \emph{Mathematical Programming}, 151\penalty0 (1):\penalty0 3--34,
  2015.

\bibitem[Xiao \& Liu(2021)Xiao and Liu]{xiao2021solving}
Xiao, N. and Liu, X.
\newblock Solving optimization problems over the {S}tiefel manifold by smooth
  exact penalty function.
\newblock Technical report, arXiv preprint arXiv:2110.08986, 2021.

\bibitem[Xiao et~al.(2022)Xiao, Liu, and Yuan]{xiao2022class}
Xiao, N., Liu, X., and Yuan, Y.-x.
\newblock A class of smooth exact penalty function methods for optimization
  problems with orthogonality constraints.
\newblock \emph{Optimization Methods and Software}, 37\penalty0 (4):\penalty0
  1205--1241, 2022.

\bibitem[Xiao et~al.(2023)Xiao, Liu, and Toh]{xiao2023dissolving}
Xiao, N., Liu, X., and Toh, K.-C.
\newblock Dissolving constraints for {R}iemannian optimization.
\newblock \emph{Mathematics of Operations Research}, 2023.

\bibitem[Yuan(2023)]{yuan2023block}
Yuan, G.
\newblock A block coordinate descent method for nonsmooth composite
  optimization under orthogonality constraints.
\newblock Technical report, arXiv preprint arXiv:2304.03641, 2023.

\bibitem[Zhang et~al.(2016)Zhang, J~Reddi, and Sra]{zhang2016riemannian}
Zhang, H., J~Reddi, S., and Sra, S.
\newblock Riemannian {SVRG}: Fast stochastic optimization on {R}iemannian
  manifolds.
\newblock In \emph{Advances in Neural Information Processing Systems},
  volume~29, 2016.

\bibitem[Zhang et~al.(2023)Zhang, Zhang, and Sra]{zhang2023sion}
Zhang, P., Zhang, J., and Sra, S.
\newblock Sion’s minimax theorem in geodesic metric spaces and a {Riemannian}
  extragradient algorithm.
\newblock \emph{SIAM Journal on Optimization}, 33\penalty0 (4):\penalty0
  2885--2908, 2023.

\bibitem[Zhu(2017)]{zhu2017riemannian}
Zhu, X.
\newblock {A Riemannian conjugate gradient method for optimization on the
  Stiefel manifold}.
\newblock \emph{Computational Optimization and Applications}, 67:\penalty0
  73--110, 2017.

\end{thebibliography}

\newpage
\appendix
\onecolumn

\section{Additional experiment details} 

The experiments are run on a laptop with an i5-10500 3.1GHz CPU processor.

\subsection{Orthogonal deep networks for distillation} \label{appendix:sec:distillation_experiments}

Here we provide the detailed network architecture for the distillation task.  
In particular, we define a $L$-layer feed-forward neural network with Tanh activation function, i.e.,
$X_{\ell+1} = \tanh(W_\ell X_{\ell} + b_\ell)$ for $\ell \in [L]$, where $W_1 \in \sR^{d_{\rm in} \times d}, W_L \in \sR^{d \times d_{\rm out}}$, and $W_\ell \in \sR^{d \times d}$ for $\ell \neq 1,L$. In the experiment, we set $L = 6$.

\subsection{Learning Lorentz embeddings} \label{appendix:sec:lorentz_experiments}
Here we provide the problem formulation for the task of learning Lorentz embeddings.
Let $\gD = \{(u,v) \}$ be the related word pairs and construct ${\rm Neg}(u) =\{ v : (u,v) \notin \gD \}$ as the negative samples of word $u$. The objective is to learn embeddings $x_u$ for all word $u$ by solving 
\begin{equation*}
    \min_{\{x_u \in \gH(n,1)\}_u} \sum_{(u,v) \in \gD} \log \frac{\exp \big( {-\dist(x_u, x_v)} \big)}{\sum_{v' \in {\rm Neg}(u)}  \exp \big( {-\dist(u, v')} \big)},
\end{equation*}
where $\dist(x_u, x_v) = {\rm arccosh}( - \langle x_u, x_v \rangle_\gL)$ is the Lorentz Riemannian distance.

\section{A review on coordinate descent for orthogonal and SPD manifold} \label{appendix:sec:related_works}

We start by reviewing the developments of coordinate descent on the orthogonal manifold \citep{shalit2014coordinate,massart2022coordinate,jianggivens}, Stiefel manifold \citep{gutman2023coordinate}, and symmetric positive definite (SPD) manifold \citep{darmwal2023low}, which motivate the proposed general framework for other manifolds.  

Some other works \citep{huang2021riemannian,peng2023block} study (block) coordinate descent on a product of manifolds, where each update concerns a component manifold. This is different to our considered setting, where the update is defined for coordinate on the tangent space for a single manifold.  

\subsection{Orthogonal manifold}

Orthogonal manifold $\gO(n)$ is the smooth space formed by the orthogonality constraints, i.e., $\gO(n) \coloneqq \{ X \in \sR^{n \times n}: XX^\top = X^\top X = I_n \}$. The tangent space can be identified as $T_X\gO(n) \coloneqq \{ \Omega X : \Omega \in \Skew(n) \}$. The Riemannian metric coincides with the Euclidean metric, i.e., $\langle U, V \rangle_X = \frac{1}{2} \langle U, V \rangle$. The $\frac{1}{2}$ is added to ensure consistency with the canonical metric for the Stiefel manifold, as we shall see later. This leads to the Riemannian gradient $\grad f(X) = \nabla f(X) - X \nabla f(X)^\top X$ and the exponential retraction is given by $\Retr_X(\theta \Omega X) = \expm(\theta \Omega) X$, for some $\theta \in \sR$. 

\begin{remark}
 We remark that in all the existing works \citep{shalit2014coordinate,massart2022coordinate,jianggivens}, the tangent space is parameterized as $T_X\gO(n) \coloneqq \{ X \Omega' : \Omega' \in \Skew(n) \}$ and thus the exponential retraction amounts to $\Retr_X(\theta X \Omega') = X \expm(\theta \Omega')$. Such a formulation is equivalent to the above by letting $\Omega = X \Omega' X^\top \in \Skew(n)$. Our reformulation allows natural generalization of the coordinate descent framework to column orthonormal matrices (the Stiefel manifold), where the orthogonal matrix is  a special case.
\end{remark}

The manifold has a dimension of $n(n-1)/2$ and its tangent space can be provided with an orthonormal basis $ H_{ij} X$ where $H_{ij} \coloneqq e_i e_j^\top - e_j e_i^\top$ is the basis for the skew-symmetric matrices. In each basis direction $H_{ij} X$, the exponential retraction reduces to the Givens rotation, which allows efficient updates, i.e., $ \Retr_X(\theta H_{ij} X) = G_{ij}(\theta) X$ where $G_{ij}(\theta) = I_n + (\cos(\theta) - 1) (e_ie_i^\top + e_je_j^\top) + \sin(\theta) (e_ie_j^\top - e_j e_i^\top)$ is known as the Givens rotation matrix around axes $i, j$ with angle $-\theta$. 

In order to minimize a function $f: \gO(n) \rightarrow \sR$, one needs to update the variables in the negative gradient direction. Here along the basis direction, coordinate descent aims to minimize the function $f(G_{ij}(\theta) X)$ with respect to $\theta$. One strategy is to solve this one-variable optimization problem directly as in \citep{shalit2014coordinate}. When the objective is more involved, we can approximately solve this problem by following a descent direction \citep{massart2022coordinate,jianggivens}, which is given by $-\frac{d}{d\theta} f(G_{ij}(\theta) X) \vert_{\theta=0} = -\langle \grad f(X), H_{ij} X\rangle_X$.
This leads to the coordinate descent update in the direction of $-\langle \grad f(X), H_{ij} X\rangle_X H_{ij} X$, which modifies two rows of $X$ every iteration, resulting in an $O(n)$ complexity per update. One pass over all the coordinates requires $\frac{n(n-1)}{2}$ iterations, leading to $O(n^3)$ complexity in total, which is comparable to the commonly considered retractions, including the exponential, Cayley and QR retractions.

\subsection{Stiefel manifold}
The Stiefel manifold $\St(n,p)$ is the set of column orthonormal matrices of size $\sR^{n \times p}$, i.e., $\St(n,p) \coloneqq \{ X \in \sR^{n \times p} : X^\top X = I_p \}$. When $p = n$, $\St(n,n) \equiv \gO(n)$. The tangent space of Stiefel manifold is identified as $T_X\St(n,p) = \{ U \in \sR^{n \times p} : X^\top U + U^\top X = 0 \} = \{ X \Omega + X_\perp K : \Omega \in \Skew(p), K \in \sR^{(n-p)\times p} \}$. The Euclidean metric turns out to be a valid Riemannian metric \citep{edelman1998geometry}, $\langle U, V \rangle_X = \langle U, V\rangle$ for any $U, V \in T_X\M$. The Riemannian gradient is derived as  $\grad f(X) = \nabla f(X) - X \sym(X^\top \nabla f(X))$. 

In \citep{gutman2023coordinate}, a coordinate (subspace) descent algorithm has been developed for general manifolds, via selecting proper subspaces for the tangent space. Although showing theoretical guarantees, the paper only provides a concrete developments for Stiefel manifold (thus including the orthogonal manifold). The basis considered for the tangent space of Stiefel manifold is $\{ X H_{ij}  \}_{1\leq i < j \leq n} \cup \{ v e_k^\top : X^\top v = 0 \}_{k \in [p]}$, where $H_{ij} = e_i e_j^\top - e_j e_i^\top$. They show along the direction of basis, the exponential retraction can be computed efficiently. That is, for basis $X H_{ij}$, we compute $\Retr_X(t XH_{ij}) = X G_{ij}(t)$, which also leads to the Givens rotation. The projection of Riemannian gradient onto the basis is given by $\langle \grad f(X), X H_{ij} \rangle X H_{ij}$. For basis $v e_k^\top$, $\Retr_X( v e_k^\top) = X + \big(\cos(\| v\|) [X]_{:,j} + \sin(\| v\|) \frac{v}{\| v\|} -  [X]_{:,j} \big) e_k^\top$ and the projection of Riemannian gradient is $(I_n - XX^\top) [\nabla f(X)]_{:,k} e_k^\top$. This essentially updates $j$-th column by Riemannian gradient descent over sphere.
We notice that each coordinate update costs $O(n)$ while the projection onto second basis costs $O(np)$.

Further, we highlight a recent paper \citep{yuan2023block}, which considers block coordinate descent updates for Stiefel manifold by modifying $k$ rows. The strategy is to decompose the the rows of the variable into two sets and solve for a subproblem that updates $k$ rows instead. Nevertheless, each subproblem can be difficult to solve in general. 


\subsection{SPD manifold}

A recent work \citep{darmwal2023low} develops coordinate (subspace) descent algorithm for symmetric positive definite (SPD) manifold, i.e., $\sS_{++}^n \coloneqq \{ X \in \sR^{n \times n} : X \succ 0\}$, with tangent space given by $T_X\sS_{++}^n = \{ X \in \sR^{n \times n} : X \in \Sym(n) \}$. Optimization with SPD constraint is difficult as the cost of maintaining positive definiteness requires at least $O(n^3)$. Under the affine-invariant metric \citep{bhatia2009positive}, i.e., $\langle U, V\rangle_X = \trace(U X^{-1} V X^{-1})$, the paper verifies that the tangent space can be provided with an orthonormal basis $L {E}_{ij} L^\top$ where $LL^\top = X$ is the Cholesky decomposition and ${E}_{ij} = (e_i e_j^\top + e_j e_i^\top)/\sqrt{2}$ for $i \neq j$ and $e_i e_j^\top$ for $i = j$.
The exponential retraction along the basis direction can be simplified as $\Retr_X(t L {E}_{ij} L^\top) = L \expm(t {E}_{ij}) L^\top$, where the Cholesky factor of $\expm(t {E}_{ij})$ has a simple form. Thus it suffices to update the Cholesky factor. The coordinate descent then updates as $\Retr_X(-\eta \theta L {E}_{ij} L^\top )$ where $\theta$ is computed as the coordinate derivative $\theta = \langle \grad f(X), L {E}_{ij} L^\top \rangle_X = \langle \nabla f(X), L {E}_{ij} L^\top \rangle$ because the Riemannian gradient has the form $\grad f(X) = X \nabla f(X) X$.

\section{Other related works}
This section summarizes the existing works for optimization under the respective manifold constraints. These methods exploit the full gradient information and in general have advantage over CD methods.

\paragraph{Optimization on orthogonal, Stiefel, and Grassmann manifolds.}
Optimization with orthogonality constraint has been widely studied. Apart from the Riemannian optimization approach, many recent works turn to infeasible methods, either through converting the orthogonality constrained problem into an unconstrained counterpart in the Euclidean space \citep{xiao2022class,lezcano2019trivializations,xiao2021solving,xiao2023dissolving}, or following a direction that leads to the same critical point on the manifold \citep{gao2019parallelizable,ablin2022fast,ablin2023infeasible}. We provide a through review of these methods in Appendix section \ref{appe_sect_ortho}. Although the per-iteration cost is smaller without using a retraction to ensure feasibility, the methods are sensitive to the choice of stepsize and other parameters, mostly a regularization parameter. In the experiment sections, we observe the infeasible methods either require careful tuning of multiple parameters or some stepsize sequence to show good performance.

\paragraph{Optimization on hyperbolic, symplectic and SPSD manifold.}
Optimization over hyperbolic space $\gH(n,p)$ has mostly focused on the case where $p = 1$ \citep{nickel2018learning,wilson2018gradient}. Given the exponential map is already efficient in this case, few works have explored more efficient alternatives to Riemannian optimization. Similarly for the symplectic manifold, existing works \citep{gao2021riemannian,gao2021geometry,gao2022optimization} have focused on the retraction-based Riemannian optimization approach. Optimization over SPD matrices has a long history and can be solved with semidefinite programming if the objective is convex. For general cost functions and with additional rank constraint, many works \citep{vandereycken2009embedded,vandereycken2013riemannian,massart2020quotient} have defined Riemannian geometries for the constraint set to leverage the tools from Riemannian optimization.

\section{Review of infeasible methods for optimization under orthogonality constraints}
\label{appe_sect_ortho}
This section reviews various infeasible methods for solving optimization problems under orthogonality constraints, i.e., 
\begin{equation}
    \min_{X} f(X), \quad \text{s.t.}  X^\top X = I_p. \label{main_ortho}
\end{equation}
Recall the {first-order optimality conditions} of the problem are 
$$\begin{cases}
    \nabla f(X) = X\Lambda, \\
    {X}^\top X = I_p
\end{cases}$$
where $\Lambda = \Lambda^\top$ is the Lagrangian multipliers. Because $\Lambda$ is symmetric, we can compute $\Lambda = X^\top \nabla f(X) = \nabla f(X)^\top X = \sym(\nabla f(X)^\top X)$ (by left-multiplying the first equation by $X^\top$). Subsequently, the first condition becomes  $\nabla f(X) - X \nabla f(X)^\top X = \skew(\nabla f(X) X^\top) X = 0$, which in fact corresponds to the first-order condition of Riemannian optimization (under the canonical metric). 
The augmented Lagrangian of \eqref{main_ortho} is given by
\begin{equation*}
    \gL_\mu (X, \Lambda) = f(X) - \frac{1}{2} \langle \Lambda, X^\top X - I_p \rangle + \frac{\mu}{4} \| X^\top X - I_p \|^2
\end{equation*}
and the Augmented Lagrangian Method (ALM) outlined in \citep{gao2019parallelizable} considers alternately updating $X$ and $\Lambda$. However the numerical performance of ALM is sensitive to the choice of $\mu$, which has been empirically verified in \citep{gao2019parallelizable}.


\paragraph{PLAM.} \citet{gao2019parallelizable} propose an alternative update of the Lagrangian multipliers $\Lambda$ by setting it to  $\sym(\nabla f(X)^\top X)$, which is optimal at first-order stationarity, and the proximal linearized augmented Lagrangian (PLAM) algorithm takes the update 
\begin{equation}
    X_{k+1} = X_k - \eta \Big( \nabla f(X_k) - X_k \sym(\nabla f(X_k)^\top X_k) + \lambda X_k (X_k^\top X_k - I_p)  \Big), \label{rejrkem}
\end{equation}
which corresponds to the $X_{k+1} = \argmin_{X} \big\langle \nabla_X \gL_\mu(X_k, \Lambda_k ), X- X_k \big\rangle  + \frac{1}{2\eta} \| X - X_k\|^2$ with $\Lambda_k = \sym(\nabla f(X_k)^\top X_k)$. However,
the boundedness of the iterates cannot be expected without setting $\lambda$ to be sufficiently large and $\eta$ to be sufficiently small. 

\paragraph{PCAL.} \citet{gao2019parallelizable} further constrain the proximal linearized update in \eqref{rejrkem} to a redundant column-sphere constraint. This is used to reduce the sensitivity of convergence to $\lambda, \mu$. The update can be implemented in a column-wise parallel fashion as 
\begin{equation*}
    (X_{k+1})_i = \frac{(X_k)_i - \eta \nabla_{X_i} \gL_\mu(X_k, \Lambda_k)}{\| (X_k)_i - \eta \nabla_{X_i} \gL_\mu(X_k, \Lambda_k) \|}.
\end{equation*}

\paragraph{PenCF} \citet{xiao2022class} consider the merit function (used to analyze function value decrease in \citep{gao2019parallelizable}) as an exact penalty function to be minimized. 
Specifically, the merit function is given by 
\begin{equation*}
    h(X) = f(X) - \frac{1}{2} \langle \sym(X^\top \nabla f(X)), X^\top X - I_p \rangle + \frac{\beta}{4} \| X^\top X - I_p \|.
\end{equation*}
Without proper constraints, directly minimizing $h(X)$ can be problematic because $h(X)$ can be unbounded. Hence, \citep{xiao2022class} incorporates constraints to the minimization of $h(X)$ (called PenC), including a ball, convex hull of Stiefel/oblique manifold, which includes the orthogonality constraint set. Then the paper shows the first-order and second-order critical points of PenC match those of the original problem (provided $\beta$ is sufficiently large). 
Nevertheless, the gradient $\nabla h(X)$ involves second-order derivatives $\nabla^2 f(X)$. Hence, the paper similarly considers approximating $\nabla h(X)$ with $\nabla f(X) - X \sym(\nabla f(X)^\top X) + \beta X(X^\top X - I_p)$, which is the same as in \citep{gao2019parallelizable} and then project the update to the constraint set.


\paragraph{Landing field} \citet{ablin2023infeasible} consider the update 
\begin{align*}
    X_{k+1} = X_k - \eta \Big( \skew(\nabla f(X_k)X_k^\top) X_k + \lambda X_k (X_k^\top X_k - I_p) \Big),
\end{align*}
where the direction takes a combination of the Riemannian gradient and a landing direction orthogonal to the Riemannian gradient. 

\paragraph{ExPen} \citet{xiao2021solving,xiao2023dissolving} convert the constrained optimization problem to an unconstrained problem by minimizing the following objective: 
\begin{equation*}
    h(X) = f \Big( X (\frac{3}{2} I_P - \frac{1}{2} X^\top X) \Big) + \frac{\beta}{4} \| X^\top X - I_p\|^2.
\end{equation*}
It can be shown that when $\beta$ is sufficiently large, first-order and second-order critical points recover those of the original problem. When compared to the framework of PCAL/PenC, the problem is now unconstrained and the gradient can be computed as 
\begin{equation*}
    \nabla h(X) = G(X) (\frac{3}{2} I_p - \frac{1}{2} X^\top X) - X \sym(X^\top G(X)) + \beta X(X^\top X - I_p)
\end{equation*}
where $G(X) = \nabla f(Y) \vert_{Y = X(\frac{3}{2} I_p - \frac{1}{2} X^\top X)}$. This allows unconstrained solvers to be directly applied.


\section{Additional developments and proofs for Section \ref{main_sect_cd}}

\subsection{Stiefel manifold}

In the main text, we focus on the Euclidean metric on Stiefel manifold, i.e., $\langle U, V \rangle_X^E = \langle U, V\rangle$, which leads to the Riemannian gradient $\grad f(X) = \nabla f(X) - X \sym(X^\top \nabla f(X))$.
We here review another popular metric on Stiefel manifold, namely the canonical metric \citep{edelman1998geometry}, defined as $\langle U, V \rangle^C_X = \langle U, (I_n - \frac{1}{2} XX^\top) V\rangle$. The corresponding Riemannian gradient is derived as $\grad_C f(X) = \nabla f(X) - X \nabla f(X)^\top X$. 

There exists a variety of retractions for the Stiefel manifold, including the QR retraction $\Retr_X^{\rm qr}(t U) ={\rm qf}(X + t U)$ where ${\rm qf}(A)$ extracts the q-factor from the thin QR decomposition. The Cayley retraction \citep{wen2013feasible,zhu2017riemannian} is 
$\Retr_X^{\rm cl} (t U) = (I - \frac{t}{2} W)^{-1} (I + \frac{t}{2} W) X$, where $U = WX$ for some $W \in \Skew(n)$. The exponential retraction is given by 
$ \Retr_X^{\rm exp}(t U) = \begin{bmatrix}
        X  & U
    \end{bmatrix}
    \expm \Big( \begin{bmatrix}
        X^\top U & - U^\top U \\
        I & X^\top U
    \end{bmatrix} \Big)
    \begin{bmatrix}
        \expm(-X^\top U) \\
        0
    \end{bmatrix}.
$

Here we verify the same coordinate derivative is recovered under the canonical metric.
\begin{lemma}
\label{lem_theta_stiefel}
Let $B_\ell = H_{ij} X$ for $1\leq i < j \leq n$, then we have $\theta = \langle \grad_E f(X), B_\ell \rangle^E_X = \langle \grad_C f(X), B_\ell \rangle_X^C = \langle \nabla f(X), B_\ell \rangle = [\nabla f(X) X^\top - X \nabla f(X)^\top ]_{ij}$.
\end{lemma}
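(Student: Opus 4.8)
The plan is to collapse both Riemannian inner products onto the single Euclidean pairing $\langle \nabla f(X), B_\ell \rangle$ and then evaluate that pairing coordinate-wise.

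First I would verify that $B_\ell = H_{ij}X$ actually belongs to $T_X\St(n,p)$. Since $H_{ij}\in\Skew(n)$, we have $X^\top (H_{ij}X) + (H_{ij}X)^\top X = X^\top(H_{ij} + H_{ij}^\top)X = 0$, so $B_\ell$ is a tangent vector; this is the only place skew-symmetry of $H_{ij}$ enters.

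Next, recall that for \emph{any} Riemannian metric $\langle\cdot,\cdot\rangle_X$ on $\St(n,p)$ the gradient is characterized by $\langle \grad f(X), U\rangle_X = \D f(X)[U] = \langle \nabla f(X), U\rangle$ for every $U\in T_X\St(n,p)$, with the right-hand side independent of the chosen metric. Taking $U = B_\ell$ and specializing first to the Euclidean metric and then to the canonical metric yields $\langle \grad_E f(X), B_\ell\rangle^E_X = \langle \nabla f(X), B_\ell\rangle = \langle \grad_C f(X), B_\ell\rangle^C_X$, so the first two quantities agree. If a self-contained verification is preferred, one may instead substitute $\grad_C f(X) = \nabla f(X) - X\nabla f(X)^\top X$ and $\langle U,V\rangle^C_X = \langle U, (I_n - \tfrac12 XX^\top)V\rangle$, expand $\langle \grad_C f(X), H_{ij}X\rangle^C_X$ into four trace terms, and check that the three correction terms cancel after reducing them via cyclicity of the trace and $X^\top X = I_p$ (they become multiples of a single trace whose coefficients sum to zero).

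Finally I would evaluate $\langle \nabla f(X), H_{ij}X\rangle$. By cyclicity of the trace, $\langle \nabla f(X), H_{ij}X\rangle = \trace\big(X^\top H_{ij}^\top \nabla f(X)\big) = \trace\big(H_{ij}^\top \nabla f(X)X^\top\big) = \langle H_{ij}, \nabla f(X)X^\top\rangle$. Writing $H_{ij} = e_ie_j^\top - e_je_i^\top$ and using $\langle e_ae_b^\top, M\rangle = [M]_{ab}$ gives $[\nabla f(X)X^\top]_{ij} - [\nabla f(X)X^\top]_{ji}$, and since $[\nabla f(X)X^\top]_{ji} = [X\nabla f(X)^\top]_{ij}$ this equals $[\nabla f(X)X^\top - X\nabla f(X)^\top]_{ij}$, completing the chain of equalities. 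The statement is essentially bookkeeping — a direct consequence of the defining property of the Riemannian gradient together with the fact that $B_\ell$ is a tangent vector — so there is no genuine obstacle; the only mild care needed is the transpose/index bookkeeping in the last step, and (if one takes the explicit route) tracking the cancellation of the canonical-metric correction terms.
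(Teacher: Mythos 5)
Your proposal is correct, but it takes a different route from the paper's own proof. The paper proves the lemma by brute force: it substitutes the explicit gradient formulas $\grad_E f(X) = \nabla f(X) - X\sym(X^\top\nabla f(X))$ and $\grad_C f(X) = \nabla f(X) - X\nabla f(X)^\top X$ together with the canonical metric $\langle U,V\rangle^C_X = \langle U,(I_n-\tfrac12 XX^\top)V\rangle$, and simplifies both pairings to $\langle \nabla f(X), H_{ij}X\rangle$, using in particular that symmetric matrices are Euclidean-orthogonal to the skew-symmetric $H_{ij}$ (it records the answer as $2[\skew(\nabla f(X)X^\top)]_{ij}$, which is the same as your $[\nabla f(X)X^\top - X\nabla f(X)^\top]_{ij}$). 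You instead check once that $H_{ij}X \in T_X\St(n,p)$ and then invoke the defining property $\langle \grad f(X), U\rangle_X = \D f(X)[U] = \langle \nabla f(X), U\rangle$, which holds for \emph{any} metric — this is exactly the general principle stated in \eqref{eq:theta_definition}, so your argument is shorter, metric-agnostic, and does not need the explicit gradient formulas at all. What the paper's computation buys in exchange is a concrete consistency check that the stated Euclidean- and canonical-metric gradient expressions really do reproduce the same coordinate derivative, which is part of the lemma's expository purpose; your fallback sketch (expanding the canonical pairing and watching the correction terms cancel) recovers that check, and your final trace/index computation of $\langle \nabla f(X), H_{ij}X\rangle$ is correct and matches the paper's.
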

\begin{proof}[Proof of Lemma \ref{lem_theta_stiefel}]
Recall the $\grad_E f(X) = \nabla f(X) - X\sym(X^\top \nabla f(X))$. Then
\begin{equation*}
    \theta = \langle \grad_E f(X), B_\ell \rangle_X^E = \langle \nabla f(X) - X \sym(X^\top \nabla f(X)), H_{ij} X\rangle = \langle \nabla f(X), H_{ij} X \rangle = 2[\skew(\nabla f(X) X^\top)]_{ij},
\end{equation*}
where we use the fact that skew-symmetric matrix is orthogonal to symmetric matrix with respect to the Euclidean inner product.
Similarly for the canonical metric,
\begin{align*}
    \theta = \langle \grad_C f(X), B_\ell \rangle_X^C &= \langle \nabla f(X) - X \nabla f(X) X^\top , (I_n - \frac{1}{2} XX^\top) H_{ij} X\rangle \\
    &= \langle \nabla f(X) - X \sym(X^\top \nabla f(X)), H_{ij} X\rangle \\
    &= \langle \nabla f(X), H_{ij} X \rangle.
\end{align*}
This verifies $\theta$ is the same under the two metrics.
\end{proof}

\subsection{Grassmann manifold}

\begin{proof}[Proof of Proposition \ref{grass_cd_prop}]
 It suffices to verify that $\nabla f(XQ) = \nabla f(X) Q$ as the Euclidean inner product $\langle \nabla f(X) Q, H_{ij} XQ \rangle = \langle \nabla f(X), H_{ij} X \rangle$. We start by noticing $f(X) = f(XQ)$ for any $Q \in \gO(p)$. Then taking derivative on both sides gives $\nabla f(X) = \nabla f(XQ) Q^\top$ and thus $\nabla f(XQ) = \nabla f(X) Q$. 
\end{proof}

\subsection{Hyperbolic manifold} \label{appendix:sec:hyperbolic}

\subsubsection{Proofs}

\begin{proof}[Proof of Proposition \ref{prop_hyper_ortho_grad}]
By the decomposition, we can express $A = X\Omega + X_{J_\perp} K + XS$ for some $\Omega \in \Skew(p), S \in \Sym(p), K \in \sR^{(n-p) \times p}$. Left-multiplying $X^\top J$ gives $X^\top J A = - \Omega - S$. Summing both sides with the transposes yields $S = - \sym(X^\top JA)$. Hence the projection to $T_X\gH(d,r)$ is given by ${\rm Proj}_X(A) = A + X\sym(X^\top J A)$. 

From the definition of Riemannian gradient, we have $\D f(X) [U] = \langle \nabla f(X), U \rangle_F = \langle J \nabla f(X), U \rangle_\gL = \langle J \nabla f(X), {\rm Proj}_X(U)  \rangle_\gL = \langle {\rm Proj}_X(J\nabla f(X)),  U\rangle_\gL$, where we use the self-adjoint property of orthogonal projection with respect to the metric. Thus the Riemannian gradient is $\grad f(X) = {\rm Proj}_X(J \nabla f(X)) = J \nabla f(X) + X \sym(X^\top \nabla f(X))$. 
\end{proof}

\begin{proof}[Proof of Proposition \ref{prop_expm_retr_hyper}]
First, we see $c(0) = X$ and we compute $c'(t) = \expm(t WJ) WJX$ and hence $c'(0) = WJX = U$. The only part left is to show $c(t) \in \gH(n,p)$. 
This can be verified by showing $\expm(tWJ)$ is a Lorentz transform. To see this, let $L(t) \coloneqq \expm(tWJ)^\top J \expm(tWJ)$. Then we have 
\begin{align*}
    L'(t) &= (WJ \expm(tWJ))^\top J \expm(tWJ) + \expm(tWJ)^\top J WJ \expm(tWJ)  \\
    &= \expm(tWJ)^\top (JW^\top J + JWJ) \expm(tWJ) = 0, \quad \forall t.
\end{align*}
Thus $L(t) = L(0) = J, \forall t$.  This completes the proof as $c(t) = \expm(tWJ) X \in \gH(n,p)$ because $c(t)^\top J c(t) = X^\top L(t) X = X^\top J X = -I_p$. 
\end{proof}

\begin{proof}[Proof of Lemma \ref{lemma_eij_lorentz}]
For the case where $i, j \neq 1$, we can see $H_{ij} J = H_{ij}$ and thus $\expm(\theta H_{ij})$ leads to the Givens rotation. However in the case where $i = 1$, $H_{ij} J = E_{ij} \coloneqq e_ie_j^\top + e_je_i^\top$. Thus, it remains to show $\expm(\theta E_{ij})$ can be simplified.
To this end, we see $E_{ij}^{2t} = \frac{1}{2} (E_{ii} + E_{jj})$, $E_{ij}^{2t-1} = E_{ij}, \text{ for } t \in \mathbb{N}$. Then we can show $\expm(\theta E_{ij}) = I + \theta E_{ij} + \frac{1}{2!} \theta^2 E_{ij}^2 + \frac{1}{3!} \theta^3 E_{ij}^3 +\cdots = \frac{1}{2} \sum_{k \neq i,j} E_{kk} + (1 + \frac{\theta^2}{2!} + \frac{\theta^4}{4!} + \cdots) \frac{1}{2} (E_{ii} + E_{jj}) + (\theta + \frac{\theta^3}{3!} + \frac{\theta^5}{5!} + \cdots )E_{ij} =  \frac{1}{2} \sum_{k \neq i,j} E_{kk} +\frac{\cosh(\theta)}{2}(E_{ii} + E_{jj}) + \sinh(\theta) E_{ij}$. 
\end{proof}

\subsubsection{A canonical-type metric}
In addition to the Euclidean metric, we define the canonical metric on hyperbolic manifold as for $U, V \in T_X\gH(n,p)$,
\begin{align*}
    \langle U, V \rangle_X = -\trace \big(U^\top (J + \frac{1}{2} JXX^\top J) V \big) &= -\trace \big( (X\Omega_u + X_{J_\perp} K_u)^\top (J + \frac{1}{2} J XX^\top J) (X\Omega_v + X_{J_\perp} K_v)  \big) \\
    &= \frac{1}{2}\trace(\Omega_u^\top \Omega_v) + \trace(K_u^\top K_v).
\end{align*}
The normal space under the canonical metric is the same as that for the Euclidean metric  $N_X\gH(d,r) = \{ X S : S \in \Sym(p) \}$ and thus the orthogonal projection can be derived also to be the same. The Riemannian gradient can be derived as follows.

\begin{proposition}
\label{prop_can_ortho_grad}
The orthogonal projection of $A \in \sR^{n \times p}$ to $T_X \gH(n,p)$ is given by ${\rm Proj}_X(A) = A + X \sym(X^\top JA)$. The Riemannian gradient with respect to the canonical metric is $\grad f(X) =- J \nabla f(X) - X \nabla f(X)^\top X = 2 \skew(J \nabla f(X) X^\top) JX$.
\end{proposition}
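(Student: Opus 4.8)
The plan is to follow the template of the proof of Proposition~\ref{prop_hyper_ortho_grad}: reuse the projection argument essentially verbatim, and then verify the claimed gradient formula directly.

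For the projection, nothing changes from the Euclidean-metric case. The ambient space splits as the direct sum of $T_X\gH(n,p)$ and the normal space $\{XS : S\in\Sym(p)\}$ -- a purely linear-algebraic decomposition that does not see the metric -- so every $A\in\sR^{n\times p}$ is uniquely $A = (X\Omega + X_{J_\perp}K) + XS$ with $\Omega\in\Skew(p)$, $K\in\sR^{(n-p)\times p}$, $S\in\Sym(p)$. Left-multiplying by $X^\top J$ and using $X^\top JX = -I_p$ and $X^\top JX_{J_\perp}=0$ gives $X^\top JA = -\Omega - S$; taking symmetric parts kills $\Omega$ and yields $S = -\sym(X^\top JA)$, hence ${\rm Proj}_X(A) = A - XS = A + X\sym(X^\top JA)$. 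Equivalently, one checks directly that $P \coloneqq A + X\sym(X^\top JA)$ satisfies the tangency condition $P^\top JX + X^\top JP = 0$, restricts to the identity on tangent vectors, and has $A - P \in \{XS:S\in\Sym(p)\}$.

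For the gradient, I would first record $X^\top JX = -I_p$ and $J^2 = I_n$ and use them to see the two stated expressions coincide: $2\skew(J\nabla f(X)X^\top)JX = (J\nabla f(X)X^\top - X\nabla f(X)^\top J)JX = J\nabla f(X)(X^\top JX) - X\nabla f(X)^\top X = -J\nabla f(X) - X\nabla f(X)^\top X =: G$. Tangency of $G$ is then immediate, since $G = WJX$ with $W = 2\skew(J\nabla f(X)X^\top)\in\Skew(n)$ and $T_X\gH(n,p) = \{WJX:W\in\Skew(n)\}$ by Section~\ref{hyperbolic_space_sect}. It remains to verify $\langle G, U\rangle_X = \D f(X)[U] = \langle\nabla f(X), U\rangle$ for all $U\in T_X\gH(n,p)$. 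Writing $M \coloneqq J + \tfrac12 JXX^\top J$ (symmetric), the canonical metric gives $\langle G,U\rangle_X = -\trace(G^\top M U) = -\langle MG, U\rangle$. The crux is simplifying $MG$: expanding and repeatedly using $X^\top JX = -I_p$ and $J^2 = I_n$ collapses it to $MG = -\nabla f(X) - JX\sym(X^\top\nabla f(X))$, so $\langle G,U\rangle_X = \langle\nabla f(X),U\rangle + \trace\!\big(\sym(X^\top\nabla f(X))\, X^\top JU\big)$. The extra term vanishes because for tangent $U$ the matrix $X^\top JU = -(X^\top JU)^\top$ is skew-symmetric while $\sym(X^\top\nabla f(X))$ is symmetric. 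Together with tangency this gives $\grad f(X) = G$.

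The main obstacle is purely the bookkeeping in the $MG$ simplification: one must be careful to use the manifold constraint in the correct form $X^\top JX = -I_p$ (the Euclidean relation $X^\top X = I_p$ does not hold here) and to keep track of which factors commute past the symmetric/skew splitting. Everything else is routine linear algebra.
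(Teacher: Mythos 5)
Your proof is correct, but it takes a genuinely different route from the paper's. The paper \emph{derives} the gradient rather than verifying it: it first notes the inverse-metric identity $(J+\tfrac12 JXX^\top J)^{-1}=J-XX^\top$, uses it to rewrite the Euclidean pairing as $\langle \nabla f(X),U\rangle = \langle (XX^\top-J)\nabla f(X),U\rangle_X$ for tangent $U$, and then invokes self-adjointness of ${\rm Proj}_X$ to get $\grad f(X)={\rm Proj}_X\big((XX^\top-J)\nabla f(X)\big)$, with the projection formula carried over from Proposition~\ref{prop_hyper_ortho_grad} since the normal space is unchanged. You instead verify the stated candidate $G=-J\nabla f(X)-X\nabla f(X)^\top X$ directly: tangency from $G=WJX$ with $W=2\skew(J\nabla f(X)X^\top)\in\Skew(n)$, and the defining identity $\langle G,U\rangle_X=\langle\nabla f(X),U\rangle$ via $MG=-\nabla f(X)-JX\sym(X^\top\nabla f(X))$ (which I checked) together with the fact that $X^\top JU$ is skew for tangent $U$. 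Your route buys three things: it avoids the inverse-metric identity, it explicitly establishes the equality of the two displayed forms of the gradient (asserted but not computed in the paper), and it sidesteps a sloppy link in the paper's chain, where ${\rm Proj}_X\big((XX^\top-J)\nabla f(X)\big)$ is written as $(XX^\top-J)\nabla f(X)$ even though that vector is not tangent in general---one must actually apply the projection, which is exactly what your computation effectively does. What the paper's route buys is that it is constructive, producing the formula rather than presupposing it. One small caveat on your projection paragraph: the splitting $\sR^{n\times p}=T_X\gH(n,p)\oplus\{XS: S\in\Sym(p)\}$ is indeed metric-independent, but to call the resulting map the \emph{orthogonal} projection for the canonical metric you should also record that $\{XS\}$ is orthogonal to the tangent space under this metric; this is immediate since $\langle XS,U\rangle_X=-\tfrac12\trace(S\,X^\top JU)=0$ because $X^\top JU$ is skew, and it is the same fact the paper asserts when it says the normal space is unchanged.
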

\begin{proof}[Proof of Proposition \ref{prop_can_ortho_grad}]
First we notice that $(J + \frac{1}{2} JXX^\top J)^{-1} = J - XX^\top$. This implies $\langle U, V \rangle = \trace(U^\top V) = \trace(U^\top (J - XX^\top) (J + \frac{1}{2} JXX^\top J) V ) = - \langle (J - XX^\top) U , V\rangle_X$.
Then by definition of Riemannian gradient, we have $\langle \nabla f(X), U \rangle = -\langle (J - XX^\top) \nabla f(X), U \rangle_X = \langle {\rm Proj}_X \big( (XX^\top - J) \nabla f(X) \big) , U\rangle_X.
$    Hence, $\grad f(X) = {\rm Proj}_X\big( (XX^\top - J) \nabla f(X) \big) = (XX^\top - J)\nabla f(X) = - J \nabla f(X) - X \nabla f(X)^\top X$.
\end{proof}

\subsubsection{Cayley retraction}
Motivated by the Cayley transform for (generalized) Stiefel manifold, we define the Cayley transform for generalized hyperbolic manifold as follows. Then in Proposition \ref{hyper_prop_cayley}, we show the Cayley transform naturally leads to a valid retraction on the generalized hyperbolic manifold.
\begin{definition}
For $X \in \gH(d,r)$ and $U = WJX \in T_X \gH(d,r)$, the Cayley transform is defined as ${\rm Cay}_X(U) = (I - \frac{1}{2} WJ)^{-1} (I + \frac{1}{2} WJ) X$, which is well-defined if $I - \frac{1}{2} WJ$ is nonsingular.
\end{definition}

\begin{proposition}[Cayley retraction]
\label{hyper_prop_cayley}
For $X \in \gH(d,r)$ and $U \in T_X \gH(d,r)$, The map $\Retr^{C}_X(tU) \coloneqq {\rm Cay}_X(tU) = (I - \frac{t}{2} W_U J)^{-1} (I + \frac{t}{2} W_UJ)X$ is a retraction, where $W_U = XU^\top P_X - P_X^\top UX^\top$, with $P_X = I_n + \frac{1}{2} JXX^\top$.
\end{proposition}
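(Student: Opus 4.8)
The plan is to verify the three defining properties of a retraction for the curve $c(t) := \Retr^C_X(tU) = (I - \tfrac{t}{2}W_U J)^{-1}(I + \tfrac{t}{2}W_U J)X$: \emph{(a)} $c(0) = X$, \emph{(b)} $c'(0) = U$, and \emph{(c)} $c(t) \in \gH(d,r)$ for all $t$ in a neighbourhood of $0$ (which simultaneously takes care of well-definedness). Property \emph{(a)} is immediate, since at $t = 0$ the map reduces to $I^{-1} I X = X$.

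For \emph{(c)}, I would first observe that $W_U = X U^\top P_X - P_X^\top U X^\top$ is skew-symmetric by a one-line transposition (using $J^\top = J$). Then, mirroring the argument in the proof of Proposition~\ref{prop_expm_retr_hyper}, it suffices to show that $L(t) := (I - \tfrac{t}{2}W_U J)^{-1}(I + \tfrac{t}{2}W_U J)$ satisfies $L(t)^\top J L(t) = J$, because then $c(t)^\top J c(t) = X^\top L(t)^\top J L(t) X = X^\top J X = -I_r$. The computation rests on the commutation identity $(I \pm \tfrac{t}{2} J W_U) J = J(I \pm \tfrac{t}{2} W_U J)$, valid because $W_U \in \Skew(n)$ and $J^2 = I_n$; combined with $(I + \tfrac{t}{2}W_U J)^\top = I - \tfrac{t}{2} J W_U$ and $(I - \tfrac{t}{2}W_U J)^{-\top} = (I + \tfrac{t}{2} J W_U)^{-1}$, this telescopes $L(t)^\top J L(t)$ down to $J$. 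Since $I - \tfrac{t}{2}W_U J$ equals $I_n$ at $t = 0$, it stays invertible for $t$ near $0$, so $c$ is smooth and well-defined there.

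For \emph{(b)}, differentiating with the rule $\tfrac{d}{dt}(I - tB)^{-1}\big|_{t=0} = B$ applied to $B = \tfrac12 W_U J$, the two resulting terms add up to $c'(0) = W_U J X$. It then remains to check $W_U J X = U$, which is precisely where the $P_X$ factors in $W_U$ are needed. Using $X^\top J X = -I_r$ one computes $P_X J X = (J + \tfrac12 JXX^\top J)X = \tfrac12 JX$, while $P_X^\top = I_n + \tfrac12 XX^\top J$, so
\[
W_U J X = \tfrac12 X U^\top J X + P_X^\top U = U + \tfrac12 X\big(U^\top J X + X^\top J U\big) = U,
\]
where the last equality uses the tangent-space characterization $U^\top J X + X^\top J U = 0$ of $U \in T_X\gH(d,r)$.

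The step I expect to be the main obstacle is \emph{(b)}: one has to carry the $P_X$ bookkeeping carefully so that the spurious terms cancel exactly against the tangent-space constraint, leaving $c'(0) = U$ — in other words, the specific choice of $W_U$ (rather than, say, any skew $W$ with $WJX = U$) is what makes $\D\Retr^C_X(0) = \id$ hold. By contrast, the Lorentz-preservation step \emph{(c)} is essentially a repeat of the reasoning already used for the matrix-exponential retraction in Proposition~\ref{prop_expm_retr_hyper}, once $W_U$ is known to be skew-symmetric.
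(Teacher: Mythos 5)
Your proposal is correct and follows essentially the same route as the paper's proof: verify $c(0)=X$, compute $c'(0)=W_UJX$ and use $P_XJX=\tfrac12 JX$ together with the tangent-space condition $U^\top JX+X^\top JU=0$ to get $c'(0)=U$, and show the Cayley factor preserves the Lorentz form $J$ via the skew-symmetry of $W_U$ and the commutation $(I+A)(I-A)=(I-A)(I+A)$. The only cosmetic difference is that the paper carries a generic skew $W$ with $U=WJX$ and checks $W_UJX=U$ at the end, whereas you work with $W_U$ throughout (and your stated justification for the identity $(I\pm\tfrac{t}{2}JW_U)J=J(I\pm\tfrac{t}{2}W_UJ)$ is unnecessary — it holds by mere distribution — but this is harmless).
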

\begin{proof}[Proof of Proposition \ref{hyper_prop_cayley}]
First, because $U \in T_X\gH(d,r)$, we can express $U = W JX$ for some $W \in \Skew(n)$. Since $\gH(d,r)$ is an embedded submanifold of $\sR^{d\times r}$, let the curve $c(t) =\Retr^C_X(tU)$ and we have $c(0) = X$ and 
\begin{align*}
    c'(t) &= \frac{1}{2} (I - \frac{t}{2} WJ)^{-1}  WJ (I - \frac{t}{2} WJ)^{-1} (I + \frac{t}{2} WJ) X + \frac{1}{2} (I - \frac{t}{2} WJ)^{-1} WJ X \\
    &= \frac{1}{2} (I - \frac{t}{2} WJ)^{-1} WJ (X + c(t))
\end{align*}
with $c'(0) = WJ X = U$. It remains to show $c(t) \in \gH(d,r)$. First we notice that 
\begin{equation*}
    c(t) = (I - \frac{t}{2} WJ)^{-1} (I + \frac{t}{2} WJ) X = (J - \frac{t}{2} JWJ)^{-1} (J + \frac{t}{2} JWJ)X.
\end{equation*}
Then 
\begin{align*}
    c(t)^\top J c(t) &= X^\top (J + \frac{t}{2} JW^\top J) (J - \frac{t}{2} JW^\top J)^{-1} J (J - \frac{t}{2} JWJ)^{-1} (J + \frac{t}{2} JWJ)X \\
    &= X^\top (J - \frac{t}{2} J WJ) (J + \frac{t}{2} JWJ)^{-1} J (J - \frac{t}{2} JWJ)^{-1} (J + \frac{t}{2} JWJ)X \\
    &= X^\top (I - \frac{t}{2} JW ) (I + \frac{t}{2} JW)^{-1} (I - \frac{t}{2} JW)^{-1} (I + \frac{t}{2} JW) JX \\
    &= X^\top JX = -I_r, 
\end{align*}
where the second equality uses the fact that $W \in \Skew(d)$ and the last equality is due to $(I + A)(I-A) = (I-A) (I+A)$ for any $A$.

Finally we can verify that $W_U = XU^\top P_X- P_X^\top UX^\top$ where $P_X = I_n + \frac{1}{2}JXX^\top$ satisfies $W_U JX = U$. That is, 
\begin{equation*}
    W_U J X = \frac{1}{2} XU^\top JX + (I_n + \frac{1}{2} XX^\top J) U = U + \frac{1}{2} X (U^\top JX + X^\top J U) = U
\end{equation*}
where we use $U \in T_X \gH(n,p)$. 
\end{proof}

\subsection{Symplectic manifold}

\subsubsection{Review of canonical metric and retractions}
The canonical metric of symplectic manifold is developed in \citep{gao2021riemannian}, as $\langle U, V\rangle_X = \frac{1}{\rho} \langle S_u, S_v \rangle + \langle K_u, K_v \rangle$ for some chosen $\rho > 0$, where $U = X \Omega_p S_u + \Omega_n X_\perp K_u$ and $V = X\Omega_pS_v + \Omega_n X_\perp K_v$. The Riemannian gradient associated with the metric is $\grad_\rho f(X) = \rho X \Omega_p \sym(\Omega_p^\top X^\top \nabla f(X)) + \Omega_n X_\perp X_\perp^\top \Omega_n^\top \nabla f(X)$. The quasi-geodesic retraction is derived by replacing the covariance derivative with the Euclidean derivative, given by $\Retr_X^{\rm qgeo}(tU) = [X, U] \expm\Big( t \begin{bmatrix}
    -\Omega_p W & \Omega_p U^\top \Omega_n U \\
    I_{2p} & - \Omega_p W
\end{bmatrix} \Big) \begin{bmatrix}
    \expm(t \Omega_n W) \\ 0
\end{bmatrix}$ where $W = X^\top \Omega_n U$. The symplectic Cayley retraction is derived to be $\Retr^{\rm cay}_X(tU) = \Big( I_{2n} - \frac{t}{2} S_{X, U} \Omega_n \Big)^{-1} \Big( I_{2n} + \frac{t}{2} S_{X, U} \Omega_n\Big) X$ where $S_{X,U} = G_X U (X \Omega_p)^\top + X \Omega_{2p} (G_XU)^\top$, $G_X = I_{2n} - \frac{1}{2} X \omega_p X^\top \Omega_n^\top$. In \citep{gao2022optimization}, a SR decomposition based retraction is proposed. That is, let $P_{2p} \coloneqq [e_1, e_3, ..., e_{2p-1}, e_2, ..., e_{2p}]$ where $e_j$ is the $j$-th basis vector of $\sR^{2p}$. Then denote a congruence matrix set as $T_{sk}(P_{2p}) \coloneqq \{ P_{2p}^\top \hat{R} P_{2p} : \hat{R} \in \sR^{2p \times 2p} \text{ is upper triangular} \}$. Then the $\Retr^{\rm sr}_X(tU) = {\rm sf}(X + t U)$ where $A = S R$ is the SR decomposition of $A \in \sR^{2n \times 2p}$, with $S \in \Sp(2n,2p)$ and $R \in T_{2p}(P_{2p})$ and ${\rm sf}(A)$ extracts the $S$ factor.

\subsubsection{Proofs}

The proof of Proposition \ref{prop_exp_retr} follows immediately from the following Lemma.

\begin{lemma}
\label{lem_sp_group}
For any $S \in \Sym(2n)$, we have $\expm(t\Omega_n S), \expm(tS \Omega_n) \in \Sp(n,n)$. 
\end{lemma}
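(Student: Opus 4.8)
The plan is to show directly that each of $Y(t)=\expm(t\Omega_n S)$ and $Y(t)=\expm(tS\Omega_n)$ satisfies $Y(t)^\top \Omega_n Y(t) = \Omega_n$, which (using $\Omega_p = \Omega_n$ in the definition of $\Sp(n,n)$) is exactly the statement $Y(t)\in\Sp(n,n)$. The only structural facts I will need about $\Omega_n$ are $\Omega_n^\top = -\Omega_n$ and $\Omega_n^2 = -I_{2n}$, both immediate from the block form $\Omega_n = \begin{bmatrix} 0 & I_n \\ -I_n & 0 \end{bmatrix}$, together with $S^\top = S$.

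For the first case, set $L(t) \coloneqq Y(t)^\top \Omega_n Y(t)$ with $Y(t)=\expm(t\Omega_n S)$, so $L(0)=\Omega_n$. Since $Y'(t) = \Omega_n S\, Y(t)$, differentiating gives $L'(t) = Y(t)^\top\big(S^\top \Omega_n^\top \Omega_n + \Omega_n \Omega_n S\big) Y(t)$. The bracketed matrix equals $S(-\Omega_n^2) + \Omega_n^2 S = S - S = 0$ by the two facts above, so $L'(t)\equiv 0$ and hence $L(t)=\Omega_n$ for all $t$. For the second case, take $Y(t)=\expm(tS\Omega_n)$, so $Y'(t)=S\Omega_n Y(t)$, and with $M(t)\coloneqq Y(t)^\top\Omega_n Y(t)$ one obtains $M'(t) = Y(t)^\top\big(\Omega_n^\top S^\top \Omega_n + \Omega_n S \Omega_n\big)Y(t) = Y(t)^\top\big(-\Omega_n S\Omega_n + \Omega_n S\Omega_n\big)Y(t) = 0$, again using $\Omega_n^\top=-\Omega_n$ and $S^\top = S$; since $M(0)=\Omega_n$, we conclude $M(t)=\Omega_n$ for all $t$.

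There is no real obstacle; this is a one-line Lie-algebra computation in disguise, namely that both $\Omega_n S$ and $S\Omega_n$ lie in the symplectic Lie algebra $\{A\in\sR^{2n\times 2n} : A^\top\Omega_n + \Omega_n A = 0\}$, whose exponential lands in the symplectic group. The only thing to watch is the bookkeeping of the signs and transposes of $\Omega_n$; I will present the self-contained derivative ("Grönwall"-type) argument above rather than appealing to the abstract Lie-group fact. Proposition \ref{prop_exp_retr} then follows: for $U = S\Omega_n X\in T_X\Sp(n,p)$ with $S\in\Sym(2n)$, the curve $t\mapsto \expm(tS\Omega_n)X$ stays in $\Sp(n,p)$ because $(\expm(tS\Omega_n)X)^\top \Omega_n (\expm(tS\Omega_n)X) = X^\top\Omega_n X = \Omega_p$, while $\expm(0)X = X$ and $\frac{d}{dt}\big|_{t=0}\expm(tS\Omega_n)X = S\Omega_n X = U$ verify the two retraction axioms.
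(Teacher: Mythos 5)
Your proof is correct: the derivative argument shows $Y(t)^\top\Omega_n Y(t)$ is constant and equal to $\Omega_n$, which by the definition $\Sp(n,n)=\{X\in\sR^{2n\times 2n}: X^\top\Omega_n X=\Omega_n\}$ is exactly the claim, and the sign bookkeeping ($\Omega_n^\top=-\Omega_n$, $\Omega_n^2=-I_{2n}$, $S^\top=S$) checks out in both cases. The route differs from the paper's, which does not prove the lemma at all but simply cites Proposition 4.6 of \citet{gao2021riemannian}; your argument is the self-contained substitute, and it is in fact the same technique the paper itself uses for the hyperbolic analogue (Proposition \ref{prop_expm_retr_hyper}, where $L(t)=\expm(tWJ)^\top J\expm(tWJ)$ is shown to be constant), so it fits the paper's style while removing the external dependency. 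Your closing observation that $\Omega_n S$ and $S\Omega_n$ lie in the symplectic Lie algebra $\{A: A^\top\Omega_n+\Omega_n A=0\}$ is the conceptual content behind both your computation and the cited result, and your re-derivation of Proposition \ref{prop_exp_retr} from the lemma matches the paper's proof of that proposition essentially verbatim. In short: correct, slightly more work than the paper's citation, but more self-contained and arguably preferable for that reason.
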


\begin{proof}[Proof of Lemma \ref{lem_sp_group}]
The proof follows from \citep[Proposition 4.6]{gao2021riemannian}. 
\end{proof}

\begin{proof}[Proof of Proposition \ref{prop_exp_retr}]
Similar to the previous sections, let $c(t) \coloneqq \Retr_X(tU)$ and we have $c(0) = X$ and $c'(0) = S \Omega_n X = U$. Finally from Lemma \ref{lem_sp_group}, we have $c(t)^\top \Omega_n c(t) = X^\top \expm(tS\Omega_n)^\top \Omega_n \expm(tS \Omega_n) X = X^\top \Omega_n X = \Omega_p$, which verifies $c(t) \in \Sp(n,p)$   
\end{proof}

\begin{proof}[Proof of Proposition \ref{prop_retr_sympl_cd}]
We can partition the basis $E_{ij} = \begin{bmatrix}
    E_{ij,1} & E_{ij,2} \\
    E_{ij,2}^\top & E_{ij,3}
\end{bmatrix} \in \{ 0,1\}^{2n \times 2n}$, where $E_{ij,1}, E_{ij,3} \in \Sym(n)$ and $E_{ij,2} \in \{0,1\}^{n \times n}$. Hence $E_{ij} \Omega_n = \begin{bmatrix}
    -E_{ij,2} & E_{ij,1} \\
    -E_{ij,3} &  E_{ij,2}^\top
\end{bmatrix}$ and the aim is to express $\expm(\theta E_{ij} \Omega_n)$ in compact form. 

For $1\leq i \leq j \leq n$, we have $E_{ij,2}, E_{ij,3} = 0$ and thus we obtain $\exp(\theta E_{ij} \Omega_n) = I + \theta E_{ij}\Omega_n + \frac{\theta^2}{2} (E_{ij} \Omega_n)^2 + \cdots = I + \theta E_{ij}\Omega_n$. Similarly for $n+1 \leq i < j \leq 2n$, we have $E_{ij,1}, E_{ij,2} = 0$ and $\expm(\theta E_{ij} \Omega_n) = I + \theta E_{ij} \Omega_n$. This verifies $\forall 1\leq i \leq j \leq n \text{ or } n+1 \leq i < j \leq 2n$
\begin{equation*}
    \Retr_X(\theta E_{ij} \Omega_n X) = X + \theta E_{ij}\Omega_n X.
\end{equation*}

For $1\leq i \leq n < j \leq 2n$, we have $E_{ij,1} = E_{ij,3} = 0$ and $E_{ij,2} = e_{i} e_{j - n}^\top$. Then $E_{ij} \Omega_n = \begin{bmatrix}
    - E_{ij,2} &0 \\
    0 & E_{ij,2}^\top
\end{bmatrix}$. We first notice that for $k = 2,3,4,...$
\begin{equation*}
    (E_{ij,2})^k = (e_i e_{j - n}^\top)^k  = \begin{cases}
        e_ie_i^\top, &\text{ if } i = j - n \\
        0, &\text{ otherwise } 
    \end{cases},
    \qquad (E^\top_{ij,2})^k = (e_{j - n} e_i^\top)^k = \begin{cases}
        e_ie_i^\top, &\text{ if } i = j - n \\
        0, &\text{ otherwise }
    \end{cases}
\end{equation*}
This suggests for $k = 2,3,4,...$,
\begin{align*}
    (E_{ij}\Omega_n)^k &= \begin{cases}
        (-1)^k e_ie_i^\top + e_{n+i}e_{n+i}^\top, &\text{ if } i = j - n\\
        0, &\text{ otherwise }
    \end{cases}  \\ 
    \text{ and } \quad \expm(\theta E_{ij} \Omega_n) &= \begin{cases} 
        I + (e^{-\theta}-1)e_ie_i^\top + (e^\theta-1) e_{n+i}e_{n+i}^\top  &\text{ if } i = j - n \\
        I + \theta E_{ij} \Omega_n, &\text{ otherwise }
    \end{cases} 
\end{align*}
This verifies for $1\leq i \leq n < j \leq 2n$,
\begin{equation*}
    \Retr_X( \theta E_{ij} \Omega_n X) = \begin{cases}
        X + (e^{-\theta}-1) e_i e_i^\top X + (e^\theta - 1) e_{n+i}e_{n+i}^\top X,  &\text{ if } i = j - n\\
        X + \theta E_{ij} \Omega_n X, &\text{ otherwise }
    \end{cases}
\end{equation*}
The proof is now complete.
\end{proof}

\subsection{Doubly stochastic manifold}



For the doubly stochastic manifold, the retraction applies the Sinkhorn algorithm \citep{knight2008sinkhorn} for matrix balancing, i.e., $\Retr_X(t U) = {\rm SK} (X \odot \exp(t U \oslash X))$, where $U$ is a tangent vector belonging to the tangent space $T_X \Pi(\mu, \nu)$ and the Sinkhorn algorithm ${\rm SK}(U)$ iteratively normalize rows and columns of $U$ according to the given marginals \citep{shi2021coupling,cardoso2010exponentials}.

\subsubsection{Proofs}

\begin{proof}[Proof of Proposition \ref{prop_sinkhorn_cd}]
In fact, we can show for any $H_{ijkl} \coloneqq (e_i - e_j)(e_k - e_l)^\top$ for $i \neq j$, $k \neq l$. The coordinate Sinkhorn is a valid retraction along the direction $H_{ijkl}$.
Let $c(t) \coloneqq {\rm cSK}(X \odot \exp( t H_{ijkl} \oslash X))$ and we can immediately see $c(0) = X$. Also, Let $\widetilde{X} \coloneqq X \odot \exp(t H_{ijkl} \oslash X)$. Then $\widetilde{X}$ differs with $X$ in only the entries at $(i,k), (j,k), (i,l), (k,l)$, which forms the $2\times 2$ sub-matrix that we wish to balance. Also, by definition, the marginals are given by $\tilde{\mu} \coloneqq ([X]_{ik}+ [X]_{il}, [X]_{jk} + [X]_{jl})$ and $\tilde{\nu} \coloneqq ([X]_{ik} + [X]_{jk}, [X]_{il} + [X_{jl}])$. It readily holds that $\tilde{\mu}^\top 1_2 = \tilde{\nu}^\top 1_2$. For notational purposes, for any matrix $A \in \sR^{m \times n}$, let $A_{ijkl} \in \sR^{m \times n}$ be the matrix that zeros out the entries except for the $2\times 2$ sub-matrix. Also, we denote $A_{ijkl}^\flat \coloneqq \begin{bmatrix}
    [{A}]_{ik} & [{A}]_{il} \\
    [{A}]_{jk} & [{A}]_{jl}
\end{bmatrix}$ that extracts the corresponding $2\times 2$ sub-matrix.
Then ${\rm cSK}(\widetilde{X})$ reduces to performing Sinkhorn on $\widetilde{X}^\flat_{ijkl}$ with marginals $\tilde{\mu}, \tilde{\nu}$ with other entries of $\widetilde{X}$ unchanged. This is well-defined as the Sinkhorn algorithm converges to the unique doubly stochastic matrix of the form $\diag(u) \widetilde{X}_{ijkl}^\flat \diag(v)$ for some positive vectors $u, v$ \citep{sinkhorn1967diagonal}. This verifies that ${\rm cSK}(\widetilde{X})$ results in a doubly stochastic matrix, which remains on the manifold. Lastly, it remains to show that $c'(0) = H_{ijkl}$. For this, we first have 
\begin{equation*}
    c'(0) = \lim_{t \rightarrow 0} \frac{c(t) - c(0)}{t} = \frac{{\rm cSK}(X \odot \exp(t H_{ijkl} \oslash X)) - X}{t} = \frac{{\rm cSK}(X + t H_{ijkl}) - X}{t},
\end{equation*}
where we use the first-order approximation of the exponential operations. Notice that ${\rm cSK}(X + t H_{ijkl})$ only modifies the $2\times 2$ sub-matrix of $X$ by ${\rm SK}(X_{ijkl}^\flat + t H_{ijkl}^\flat)$. From \citep{douik2019manifold,shi2021coupling}, we have ${\rm SK}(X_{ijkl}^\flat + t H_{ijkl}^\flat) \approx X_{ijkl}^\flat + t H_{ijkl}^\flat$. This suggests 
$\lim_{t \rightarrow 0 } ({{\rm SK}(X^\flat_{ijkl} + t H^\flat_{ijkl}) - X^\flat_{ijkl}})/t = H_{ijkl}^\flat, $
which verifies $c'(0) = H_{ijkl}$.
\end{proof}

\begin{lemma}
\label{lemma_sink_2x2}
Given a positive $2 \times 2$ matrix $A = \begin{bmatrix}
    a & b \\
    c & d
\end{bmatrix}$, the Sinkhorn algorithm on $A$ with marginals $p = [p_1, p_2], q = [q_1, q_2] \in\Delta_2$ converges to $\begin{bmatrix}
    c_{11} a & c_{12} b\\
    c_{21} c & c_{22} d
\end{bmatrix}$ where $c_{12} = p_1/(\kappa a + b), c_{22} = p_2/(\kappa c + d)$, $c_{11} = \kappa c_{12}, c_{21} = \kappa c_{22}$ where $\kappa$ is the positive root of the equation $q_2 ac \kappa^2 + \big( (bc + ad) q_2 - bc p_1 - ad p_2 \big) \kappa - bdq_1 = 0$. 
\end{lemma}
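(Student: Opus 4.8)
The plan is to bypass the iteration entirely and use the classical characterization of its limit: for a positive matrix $A$ and positive target marginals $p,q$ with $p_1+p_2=q_1+q_2$, the Sinkhorn--Knopp iteration converges to the \emph{unique} matrix of the form $\diag(u)\,A\,\diag(v)$ (with $u,v>0$) whose row sums are $p$ and column sums are $q$ \citep{sinkhorn1967diagonal,knight2008sinkhorn}. Hence it suffices to exhibit positive diagonal scalings realizing the matrix $M=\begin{bmatrix} c_{11}a & c_{12}b\\ c_{21}c & c_{22}d\end{bmatrix}$ from the statement, check that $M$ has the prescribed marginals, and invoke uniqueness.

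First I would record the structural identity satisfied by any positive diagonal rescaling of $A$, namely $c_{11}c_{22}=c_{12}c_{21}$, equivalently $c_{11}/c_{12}=c_{21}/c_{22}$; calling this common value $\kappa>0$ gives $c_{11}=\kappa c_{12}$ and $c_{21}=\kappa c_{22}$. Conversely, for any $\kappa,c_{12},c_{22}>0$ the matrix $M$ is exactly $\diag(c_{12},c_{22})\,A\,\diag(\kappa,1)$, a legitimate rescaling. Next I would impose the two row-sum equations $c_{11}a+c_{12}b=p_1$ and $c_{21}c+c_{22}d=p_2$; substituting $c_{11}=\kappa c_{12}$ and $c_{21}=\kappa c_{22}$ solves them explicitly as $c_{12}=p_1/(\kappa a+b)$ and $c_{22}=p_2/(\kappa c+d)$, matching the claimed formulas. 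It remains to pin down $\kappa$: imposing the column-two equation $c_{12}b+c_{22}d=q_2$, clearing denominators, and using $p_1+p_2=1$ collapses everything to
\begin{equation*}
  q_2\,ac\,\kappa^2 + \big((bc+ad)q_2 - bc\,p_1 - ad\,p_2\big)\kappa - bd\,q_1 = 0,
\end{equation*}
which is precisely the displayed quadratic.

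To finish I would argue that this quadratic has a unique positive root: since $a,b,c,d>0$ and $q_1,q_2>0$, the leading coefficient $q_2ac$ is positive and the constant term $-bd\,q_1$ is negative, so the product of the roots is $-bd\,q_1/(q_2 ac)<0$; exactly one root is positive, and this is the $\kappa$ in the statement. With this value the formulas give strictly positive $c_{12},c_{22}$, hence strictly positive $c_{11}=\kappa c_{12}$, $c_{21}=\kappa c_{22}$, so $M$ is a genuine positive rescaling of $A$. The row-sum conditions hold by construction, the column-two condition is the defining equation for $\kappa$, and the column-one sum is forced to be $1-q_2=q_1$ because the total mass of $M$ equals $p_1+p_2=1$. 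Thus $M$ is the unique Sinkhorn-balanced matrix, completing the proof.

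I do not anticipate a real obstacle here: the derivation of the quadratic and the verification that the fourth marginal equation is redundant are routine algebra, and the only step requiring a short genuine argument is the sign analysis guaranteeing a unique positive root $\kappa$ (which is what makes the closed-form expressions well defined). If one wants to be careful about degenerate cases, I would additionally note that $p,q$ having strictly positive entries — automatic for the reduced marginals arising in coordinate Sinkhorn — is what makes both the Sinkhorn limit and the formulas meaningful.
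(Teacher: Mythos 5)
Your proposal is correct and follows essentially the same route as the paper's proof: invoke the Sinkhorn--Knopp uniqueness of the diagonal scaling $\diag(u)\,A\,\diag(v)$, set $c_{ij}=u_iv_j$ with common ratio $c_{11}/c_{12}=c_{21}/c_{22}=\kappa$, solve the row equations for $c_{12},c_{22}$, and obtain the stated quadratic for $\kappa$ from the second column-sum equation. Your added checks (the sign argument guaranteeing a unique positive root, the explicit rescaling $\diag(c_{12},c_{22})A\diag(\kappa,1)$, and the redundancy of the remaining marginal equation via total mass) are welcome refinements that the paper leaves implicit, but they do not change the underlying argument.
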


\begin{proof}[Proof of Lemma \ref{lemma_sink_2x2}]
Sinkhorn algorithm converges to the unique doubly stochastic matrix of the form $\diag(u) A \diag(v)$ for $u = [u_1, u_2], v = [v_1, v_2]$. From the constraints, $\diag(u) A \diag(v) 1_2 = p$ and $\diag(v) A^\top \diag(u) 1_2 = q$ we need to solve the quadratic problem 
\begin{equation}\label{eq:sinkhorn_equations}
    \begin{cases}
        (u_1 v_1) a + (u_1 v_2) b = p_1\\
        (u_2 v_1) c + (u_2 v_2) d = p_2\\
        (u_1 v_1) a + (u_2 v_1) c  = q_1 \\
        (u_1 v_2) b + (u_2 v_2) d = q_2.
    \end{cases}
\end{equation}
Let $c_{11} = u_1 v_1, c_{12} = u_1 v_2, c_{21} = u_2 v_1, c_{22} = u_2 v_2$ which transforms (\ref{eq:sinkhorn_equations}) into a set of linear equations for the variables $c_{11}, c_{12}, c_{21}, c_{22}$. The equation system, however, is under-determined and has many solutions. The unique solution that is sought should satisfy $c_{11}/c_{12} = c_{21}/c_{22}  =\kappa$. To this end, from the first two equations, we obtain
\begin{equation*}
    c_{12} = p_1/(\kappa a + b), \quad c_{22} = p_2/(\kappa c + d).
\end{equation*}
Substituting the expressions to the last equation yields $\frac{b p_1}{\kappa a + b} + \frac{d p _2}{\kappa c + d} = q_2$, which we solve for $\kappa$ as the positive root of $q_2 ac \kappa^2 + \big( (bc + ad) q_2 - bc p_1 - ad p_2 \big) \kappa - bdq_1 = 0$. 
\end{proof}

\section{Formal developments and proofs for Section \ref{algo_sect}} \label{appendix:sec:proofs}

\subsection{Developments}
\begin{assumption}
\label{main_conv_assump}
Consider a neighbourhood $\gX \subseteq \M$ that contains a critical point. 
\begin{enumerate}[label=\ref{main_conv_assump}.\arabic*, labelindent=10pt, noitemsep]
    \item The basis and its projection are bounded. Let the projection onto the basis $B_{\ell,X}$ be $\gP_{B_{\ell,X}}(U) \coloneqq \langle U, B_{\ell, X} \rangle_X B_{\ell, X}$. There exists constant $c_b , c_p> 0$ such that $\forall X \in \gX, U \in T_X\M, \ell \in \gI$, $\|B_{\ell, X}\|^2_X \leq c_b$ and $\sum_{\ell \in \gI} \|\gP_{B_{\ell, X}} U \|^2_X \geq c_p \| U\|^2_X$.  \label{assump_bound_basis}
    \item The objective $f$ is retraction $L$-smooth in $\gX$, i.e., $f(\Retr_X(U)) - f(U) - \langle \grad f(X), U \rangle_X \leq \frac{L}{2} \| U\|_X^2$, $\forall X \in \gX$ and $U \in T_X\M$ such that $\Retr_X(U) \in \gX$. \label{assump_retr_L_smooth}
\end{enumerate}
\end{assumption}

\begin{remark}
Assumption \ref{assump_bound_basis} requires the basis has a bounded norm and the projection of any tangent vector onto the basis does not vanish. Such an assumption is manifold-specific and we can verify that $\| B_{\ell, X} \|_X^2$ has an upper bound (e.g., for Stiefel and Grassmann, $\| B_{\ell, X} \|_X^2 \leq \| H_{i,j} \|^2_F = {2}$). Then we note that the second requirement trivially holds for orthonormal basis due to the decomposition of $U$ and Jensen's inequality. For non-orthonormal basis, this assumption also holds as long as projection of a tangent vector does not vanish. Assumption \ref{assump_retr_L_smooth} can also be satisfied by the compactness of the domain, e.g., we can take $L$ in Assumption \ref{assump_retr_L_smooth} to be $L = \max_{X \in \mathcal X, U \in T_X\mathcal{M}: \| U\|_X = 1} \frac{d^2 f({\rm Retr}_X(t U))}{dt^2}$.  These are all reasonable assumptions within a compact neighbourhood $\gX$. 
\end{remark}

\noindent\textbf{Theorem \ref{thm_rcd_random}} (Formal)\textbf{.}
\textit{Under Assumption \ref{main_conv_assump}, consider RCD algorithm with $S = 1$ and $\ell_k^s$ selected uniformly at random from $\gI$. Then suppose $\eta = \frac{1}{Lc_b}$, it satisfies $\min_{ 0\leq k \leq K-1} \sE \| \grad f(X_k) \|^2_{X_k} \leq \frac{2L c_b^2 c_p^{-1} |\gI| \Delta_0}{K}$, where $\Delta_0 \coloneqq f(X_0) - f^*$.}

To analyze the cyclic variant, we further require the assumption that bounds the difference between Riemannian distance and distance induced by general retraction. In addition, we require the gradient Lipschitzness.

\begin{assumption}
\label{assump_cyclic_rcd}
Under the same settings as in Assumption \ref{main_conv_assump},
\begin{enumerate}[label=\ref{assump_cyclic_rcd}.\arabic*, labelindent=20pt,noitemsep]
\item For all $X, Y = \Retr_X(U) \in \gX$, there exists constants $\vartheta_0, \vartheta_1 > 0$ such that $ \vartheta_0\| U \|^2_X \leq \dist^2(X,Y) \leq \vartheta_1 \| U \|^2_X$.  \label{assump_dist_retr}

\item The objective has retraction $L_g$-Lipschitz gradient, i.e., $\| \grad  f(X) - \gT_Y^X \grad f(Y) \|^2_X \leq L_g \| U \|^2_X$, $\forall X, Y = \Retr_X(U) \in \gX$ and $\gT_X^Y$ is the an isometric vector transport that satisfies $\langle \gT_X^Y U, \gT_X^Y V \rangle_Y = \langle U, V\rangle_X$, $\forall X, Y \in \gX, U, V \in T_X\M$. \label{grad_lips_assump}

\item For any fixed coordinate index $\ell \in \gI$, there exists a constant $\upsilon > 0$ such that for all $X, Y \in \gX$, $V \in T_Y\M$, $\| \gP_{B_{\ell, X}} \gT_{Y}^X V \|^2_X \geq \upsilon \| \gP_{B_{\ell, Y}} V \|^2_Y$. \label{assump_same_basis_pro}
\end{enumerate}
\end{assumption}

\begin{remark}
Assumption \ref{assump_dist_retr} bounds the difference Riemannian distance (relating to inverse exponential map) and the inverse retraction. Because retraction is a first-order approximation to the exponential map, this assumption naturally holds when the domain is sufficiently small (see \cite{huang2015riemannian,sato2019riemannian}). Assumption \ref{grad_lips_assump} is further required because when general retraction is used, gradient Lipschitzness is not equivalent to function smoothness. Assumption \ref{assump_same_basis_pro} further claims that the difference between the same coordinate basis on different tangent spaces is bounded. We note that the RHS is identical to $\| \mathcal P_{\mathcal T_Y^X B_{\ell,Y}} \mathcal T_{Y}^X V \|$  due to the isometric vector transport. Then it reduces to whether $\mathcal T_Y^X B_{\ell,Y}$ and $B_{\ell, X}$ are related, which is expected because due to the compactness of the domain, $X$ is bounded from $Y$. This allows to establish the convergence for cyclic selection of basis. 
\end{remark}

\noindent\textbf{Theorem \ref{thm_cyclic_rcd}} (Formal)\textbf{.}
\textit{Under Assumption \ref{main_conv_assump} and consider RCD algorithm with $S = |\gI|$ and $\ell_k^s =s+1$ for $s = 0, ..., |\gI|-1$. Then selecting $\eta = \frac{1}{L c_b}$ gives $\min_{0 \leq  k \leq K-1} \| \grad f(X_k) \|_{X_k} \leq \frac{C \Delta_0}{K}$, where $C = 4 L c_b^2 c_p^{-1} \upsilon^{-1} (1 + |\gI|^2 c_b^{-1} L^{-2} L_g  \vartheta_1 \vartheta_0^{-1})$.} \\

\noindent\textbf{Theorem \ref{thm_rand_cyc_rcdlin}} (Formal)\textbf{.}
\textit{Under Assumption \ref{main_conv_assump}, \ref{assump_cyclic_rcd} and further let $\omega_0, \omega_1 > 0$, such that for any fixed epoch $k$, {$\omega_0 \langle \nabla f(X_k^s), B_{\ell_k^s} \rangle^2 \leq  \theta_k^s \langle \nabla f(X_k^s), B_{\ell_k^s} \rangle \leq \omega_1 \langle \nabla f(X_k^s), B_{\ell_k^s} \rangle^2$,} $\forall s \leq S_{\rm max}-1$. 
}

\textit{(Randomized). Consider RCDlin algorithm with $1< S \leq S_{\rm max}$ and $\ell_k^s$ selected uniformly at random from $\gI$. Then choosing $\eta = \frac{\omega_0}{L c_b \omega_1^2}$, we obtain $\min_{0 \leq k \leq K-1, 0\leq s \leq S-1} \sE \| \grad f(X_k^s) \|_{X_k^s}^2 \leq \frac{2L c_b^2 c_p^{-1} w_0^{-2} w_1^2 |\gI| \Delta_0}{KS}$. 
}

\textit{(Cyclic). Suppose $S_{\rm max} \geq |\gI|$ and consider RCDlin algorithm with $S = |\gI|$ and $\ell_k^s = s+1$ for $s = 0,..., |\gI|-1$. Then choosing $\eta = \frac{\omega_0}{L c_b \omega_1^2}$, we have $\min_{0\leq k \leq K-1} \| \grad f(X_k^s) \|^2_{X_k^s} \leq \frac{\widetilde{C}\Delta_0}{K}$, where $\widetilde{C} = 4L c_b^2 \omega_1^2 \omega_0^{-2} c_p^{-1} \nu^{-1} (1 + |\gI|^2 c_b^{-1} L^{-2}L_g \vartheta_1 \vartheta_0^{-1})$.
}

\begin{remark}
{We finally remark that the proof ideas of cyclic and randomized RCD follow from classic developments of coordinate descent \cite{wright2015coordinate} in the Euclidean space by showing sufficient descent in the objective function. On general manifolds, in order to generalize the proof ideas, we further require the assumptions outlined in Assumption \ref{main_conv_assump}, \ref{assump_cyclic_rcd}. In particular, for cyclic selection rule, we require Assumption \ref{assump_same_basis_pro} to relate bases from different tangent spaces. Similar assumptions have been considered in \cite{gutman2023coordinate} for showing convergence of deterministic subspace descent algorithms on manifolds (see ($C,r$)-norm condition).}
\end{remark}

\subsection{Proofs}

\begin{proof}[Proof of Theorem \ref{thm_rcd_random}]
Because $S=1$ and by retraction $L$-smoothness, we have 
\begin{align*}
    f(X_{k+1}) - f(X_k) &\leq  - \eta \langle \grad f(X_k), \theta_k B_{\ell_k} \rangle_{X_k} + \frac{\eta^2 \theta_k^2 L}{2} \| B_{\ell_k} \|^2_{X_k} \\
    &\leq - \eta \theta_k^2 \frac{\| B_{\ell_k} \|_{\ell_k}^2}{c_b} + \frac{\eta^2\theta_k^2L}{2} \| B_{\ell_k} \|^2_{X_k} \\
    &=  - \frac{\| \gP_{B_{\ell_k}}\grad f(X_k) \|_{X_k}^2}{2Lc_b^2},
\end{align*}
where we use the assumption $\|B_{\ell_k} \|^2_{X_k} \leq c_b$, and choose $\eta = \frac{1}{Lc_b}$. Taking expectation with respect to $\ell_k$, we have 
\begin{align*}
    \sE_{\ell_k}[f(X_{k+1})] - f(X_k) &\leq - \frac{1}{2Lc_b^2} \sE_{\ell_k}[ \| \gP_{B_{\ell_k}} \grad f(X_k) \|_{X_k}^2 ] \\
    &= -\frac{1}{2|\gI|Lc_b^2} \sum_{\ell \in \gI} {\| \gP_{B_{\ell, X_k}} \grad f(X_k) \|^2_{X_k}} \\
    &\leq -\frac{c_p}{2 |\gI| L c_b^2}  \| \grad f(X_k) \|^2_{X_k}. 
\end{align*}
Telescoping this inequality and taking full expectation yields
\begin{equation*}
    \frac{1}{K} \sum_{k=0}^{K-1} \sE \| \grad f(X_k) \|_{X_k}^2 \leq \frac{2L c_b^2 c_p^{-1} |\gI| \Delta_0}{K},
\end{equation*}
where we let $\Delta_0 \coloneqq f(X_0) - f^*$. 
\end{proof}

\begin{lemma}
\label{proj_bound_lem}
Under Assumption \ref{assump_bound_basis}, we have $\| \gP_{B_{\ell, X}} U \|_X \leq c_b \| U\|_X$, $\forall X \in \gX, \ell \in \gI$.
\end{lemma}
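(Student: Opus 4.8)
The plan is to simply unfold the definition of the rank-one projection operator and apply Cauchy--Schwarz. Recall from Assumption~\ref{assump_bound_basis} that $\gP_{B_{\ell,X}}(U) \coloneqq \langle U, B_{\ell,X}\rangle_X\, B_{\ell,X}$, so taking norms gives $\|\gP_{B_{\ell,X}} U\|_X = |\langle U, B_{\ell,X}\rangle_X|\,\|B_{\ell,X}\|_X$. Applying the Cauchy--Schwarz inequality to the inner product factor yields $|\langle U, B_{\ell,X}\rangle_X| \leq \|U\|_X\,\|B_{\ell,X}\|_X$, and hence $\|\gP_{B_{\ell,X}} U\|_X \leq \|U\|_X\,\|B_{\ell,X}\|_X^2$. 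Invoking the bound $\|B_{\ell,X}\|_X^2 \leq c_b$ from Assumption~\ref{assump_bound_basis} then immediately gives $\|\gP_{B_{\ell,X}} U\|_X \leq c_b \|U\|_X$, which is exactly the claim.

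There is essentially no obstacle here; this lemma is a one-line consequence of Cauchy--Schwarz and the norm bound on the basis. The one point to be careful about is that $\gP_{B_{\ell,X}}$ is defined through the (generally non-normalized) basis vector $B_{\ell,X}$, so the estimate genuinely picks up $\|B_{\ell,X}\|_X^2$ rather than $\|B_{\ell,X}\|_X$; one should not silently assume $\|B_{\ell,X}\|_X=1$. Since the assumption only controls $\|B_{\ell,X}\|_X^2 \le c_b$ (and $c_b$ is not assumed to be at most $1$), the constant in the lemma is exactly $c_b$, matching the stated bound. This estimate will subsequently be used to relate the norm of a projected (anchored) gradient to the norm of the true gradient in the RCDlin analysis.
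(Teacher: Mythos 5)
Your proof is correct and is essentially the same argument as the paper's: both are one-line consequences of Cauchy--Schwarz together with the bound $\|B_{\ell,X}\|_X^2 \le c_b$ from Assumption~\ref{assump_bound_basis}. The only cosmetic difference is that the paper squares the norm, applies Cauchy--Schwarz to $\langle \gP_{B_{\ell,X}} U, U\rangle_X$, and cancels a factor of $\|\gP_{B_{\ell,X}} U\|_X$, whereas you apply Cauchy--Schwarz directly to $\langle U, B_{\ell,X}\rangle_X$, which even avoids the (harmless) cancellation step when $\gP_{B_{\ell,X}} U = 0$.
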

\begin{proof}[Proof of Lemma \ref{proj_bound_lem}]
$\| \gP_{B_{\ell, X}} U \|^2_X \leq c_b \langle U, B_\ell(X) \rangle_X^2 = c_b \big\langle \langle U, B_\ell(X)  \rangle_X B_\ell(X) , U \big\rangle_X \leq c_b \|\gP_{B_{\ell, X}} U \|_X \| U\|_X$. Cancelling $\| \gP_{B_{\ell, X}} U \|_X$ on both sides completes the proof. 
\end{proof}

\begin{proof}[Proof of Theorem \ref{thm_cyclic_rcd}]
We first focus on a single epoch $k$ and for notation simplicity, we let $\gT_{X_k^s}^{X_k} = \gT_{s \rightarrow 0}$ and $\gT_{X_k}^{X_k^s} = \gT_{0 \rightarrow s}$. Similarly from retraction $L$-smoothness,
\begin{equation*}
    f(X_{k}^{s+1}) - f(X_k^{s}) \leq - \frac{1}{2Lc_b^2} \| \gP_{B_{\ell_k^s}} \grad f(X_k^s) \|_{X_k^s}^2
\end{equation*}
with stepsize $\eta = \frac{1}{Lc_b}$. Summing over $s = 0, ..., S-1$, we can bound 
\begin{equation*}
    f(X_{k+1}) - f(X_k) \leq - \frac{1}{2Lc_b^2} \sum_{s=0}^{S-1} \| \gP_{B_{\ell_k^s}} \grad f(X_k^s) \|_{X_k^s}^2.
\end{equation*}

Then it remains to bound the RHS. From $L_g$-Lipschitz, 
\begin{align*}
    \|   \grad f(X_k^s) -  \gT_{0 \rightarrow s}\grad f(X_k)   \|_{X_k^s}^2 \leq L_g \| \Retr_{X_k^s}^{-1}(X_k) \|_{X_k^s}^2 &\leq L_g \vartheta_0^{-1} \dist^2(X_k, X_k^s) \\
    &\leq L_g \vartheta_0^{-1} \big( \sum_{j = 1}^s \dist(X_k^{j-1}, X_k^j) \big)^2\\
    &\leq \eta^2 L_g \vartheta_1 \vartheta_0^{-1} s \sum_{j=0}^{s-1} \| \gP_{B_{\ell_k^j}}(\grad f(X_k^j)) \|^2_{X_k^j}.
\end{align*}
where we use Assumption \ref{assump_cyclic_rcd} and triangle inequality of Riemannian distance. 

Now we can show 
\begin{align*}
    \|  \gP_{B_{\ell_k^s}} \gT_{0 \rightarrow s} \grad f(X_k) \|_{X_k^s}^2 &\leq 2\| \gP_{B_{\ell_k^s}} \gT_{0 \rightarrow s} \grad f(X_k) - \gP_{B_{\ell_k^s}} \grad f(X_k^s)  \|^2_{X_k^s} + 2\| \gP_{B_{\ell_k^s}} \grad f(X_k^s) \|^2_{X_k^s} \\
    &\leq 2 c_b \| \gT_{0 \rightarrow s} \grad f(X_k) - \grad f(X_k^s) \|^2_{X_k^s} + 2 \| \gP_{B_{\ell_k^s}} \grad f(X_k^s) \|^2_{X_k^s} \\
    &\leq 2 c_b \eta^2 L_g \vartheta_1 \vartheta_0^{-1} S \sum_{j=0}^{s-1} \| \gP_{B_{\ell_k^j}}(\grad f(X_k^j)) \|^2_{X_k^j} + 2 \| \gP_{B_{\ell_k^s}} \grad f(X_k^s) \|^2_{X_k^s} 
\end{align*}
where the second inequality is due to Lemma \ref{proj_bound_lem}. Summing this inequality from $s = 0,..., S-1$ gives
\begin{align*}
    &\sum_{s = 0}^{S-1} \| \gP_{B_{\ell_k^s}} \gT_{0\rightarrow s} \grad f(X_k) \|_{X_k^s}^2 \\
    &\leq 2 \sum_{s = 0}^{S-1} \| \gP_{B_{\ell_k^s}} \grad f(X_k^s) \|^2_{X_k^s} + 2 c_b \eta^2 L_g \vartheta_1 \vartheta_0^{-1} |\gI|\sum_{s = 0}^{S-1} \sum_{j=0}^{s-1} \| \gP_{B_{\ell_k^j}}(\grad f(X_k^j)) \|_{X_k^j} \\
    &\leq 2(1 + |\gI|^2 c_b \eta^2 L_g \vartheta_1 \vartheta_0^{-1}) \sum_{s=0}^{S-1} \| \gP_{B_{\ell_k^s}} \grad f(X_k^s) \|^2_{X_k^s} 
\end{align*}
where we notice $S = |\gI|$. Also due to the cyclic selection of $\ell_{k}^s$, we can see the LHS is 
$$ \sum_{s = 0}^{S-1} \| \gP_{B_{\ell_k^s}} \gT_{0\rightarrow s} \grad f(X_k) \|_{X_k^s}^2 \geq \upsilon \sum_{\ell \in \gI}  \| \gP_{B_{\ell, X_k}} \grad f(X_k) \|_{X_k}^2 \geq c_p \upsilon \| \grad f(X_k) \|^2_{X_k},$$
where we use Assumption \ref{assump_bound_basis} and \ref{assump_same_basis_pro}. Combining with previous results, we finally obtain 
\begin{equation*}
    f(X_{k+1}) - f(X_k) \leq - \frac{1}{2Lc_b^2} \sum_{s=0}^{S-1} \| \gP_{B_{\ell_k^s}} \grad f(X_k^s) \|_{X_k^s}^2 \leq - \frac{1}{C} \| \grad f(X_k) \|_{X_k}^s,
\end{equation*}
where $C \coloneqq 4 L c_b^2 c_p^{-1} \upsilon^{-1} (1 + |\gI|^2 c_b^{-1} L^{-2} L_g  \vartheta_1 \vartheta_0^{-1})$. Telescoping this inequality completes the proof.
\end{proof}

\begin{proof}[Proof of Theorem \ref{thm_rand_cyc_rcdlin}]
 For the randomized setting, by retraction $L$-smoothness,
 \begin{align*}
     f(X_k^{s+1}) - f(X_k^s) &\leq - \eta \langle \grad f(X_k^s), \theta_k^s B_{\ell_k^s} \rangle_{X_k^s} + \frac{\eta^2 {\theta_k^s}^2 L}{2} \| B_{\ell_k^s} \|^2_{X_k^s} \nonumber\\
     &= - \eta \theta_k^s \langle \nabla f(X_k^s), B_{\ell_k^s} \rangle + \frac{\eta^2 {\theta_k^s}^2 L}{2} \| B_{\ell_k^s} \|^2_{X_k^s} \nonumber\\
     &\leq - \eta {\omega_0} \langle  \nabla f(X_k), B_{\ell_k^s}\rangle^2 + \frac{\eta^2 \omega_1^2 L}{2} \| \langle \nabla f(X_k^s), B_{\ell_k^s} \rangle B_{\ell_k^s} \|^2_{X_k^s} \nonumber \\
     &\leq \Big( - \frac{\eta \omega_0}{c_b} + \frac{\eta^2 \omega_1^2 L}{2} \Big) \| \langle \nabla f(X_k^s), B_{\ell_k^s} \rangle B_{\ell_k^s} \|^2_{X_k^s} \nonumber\\
     &= - \frac{\omega_0^2}{2 L c_b^2 \omega_1^2} \| \gP_{B_{\ell_k^s}} \grad f(X_k^s) \|_{X_k^s}^2
 \end{align*}
 where we choose $\eta = \frac{\omega_0}{L c_b \omega_1^2}$. The second inequality follows from the assumption $\omega_0 \langle \nabla f(X_k^s), B_{\ell_k^s} \rangle^2 \leq  \theta_k^s \langle \nabla f(X_k^s), B_{\ell_k^s} \rangle \leq \omega_1 \langle \nabla f(X_k^s), B_{\ell_k^s} \rangle^2$ and the third inequality is due to Assumption \ref{assump_bound_basis}. Following the similar proof strategy, we obtain the desired result. For the cyclic setting, the bound also readily follows by using the above result.
\end{proof}

\end{document}